\theoremstyle{plain}
\newtheorem{theorem}{Theorem}[section]
\newtheorem{prop}{Proposition}[section]
\newtheorem{lemma}[theorem]{Lemma}
\theoremstyle{remark}
\newtheorem{definition}[theorem]{Definition}
\newtheorem*{example}{Example}
\theoremstyle{remark}
\def\mR{\mathbb{R}}
\def\mC{\mathbb{C}}
\def\tr{\mbox{tr}}
\def\var{\mbox{var}}
\def\cov{\mbox{cov}}
\def\diag{\mbox{diag}}
\newcommand{\bSig}{\mbox{\boldmath $\Sigma$}}
\newcommand{\bW}{\mbox{\boldmath $W$}}
\newcommand{\bI}{\bold $I$}
\newcommand{\one}{\boldmath{1}}
\newcommand{\E}{\mathbb{E}}
\newcommand{\trans}{^\top}
\newcommand{\um}{\underline{m}}
\def\T{{\bf T}}
\def\X{{\bf X}}
\def\Y{{\bf Y}}
\def\Z{{\bf Z}}
\def\A{{\bf A}}
\def\B{{\bf B}}
\def\R{{\bf R}}
\def\G{{\bf G}}
\def\bI{{\bf I}}
\def\bS{{\bf S}}
\begin{document}
\begin{frontmatter}
\title{Spectral analysis of spatial-sign covariance matrices for heavy-tailed data with dependence}
\runtitle{spatial-sign covariance matrices for heavy-tailed}

\begin{aug}
\author{\fnms{Hantao}~\snm{Chen}\ead[label=e1]{htchen2000@sjtu.edu.cn}
\orcid{0009-0004-7824-2605}}
\and 
\author{\fnms{Cheng}~\snm{Wang}\ead[label=e2]{chengwang@sjtu.edu.cn}\orcid{0000-0003-4752-6088}}
\address{School of Mathematical Sciences, MOE-LSC, Shanghai Jiao Tong University\printead[presep={,\ }]{e1,e2}}
\end{aug}

\begin{abstract}
  This paper investigates the spectral properties of spatial-sign covariance matrices, a self-normalized version of sample covariance matrices, for data from $\alpha$-regularly varying populations with general covariance structures. By exploiting the elegant properties of self-normalized random variables, we establish the limiting spectral distribution and a central limit theorem for linear spectral statistics. We demonstrate that the Mar{\u{c}}enko-Pastur equation holds under the condition $\alpha \geq 2$, while the central limit theorem for linear spectral statistics is valid for $\alpha>4$, which are shown to be nearly the weakest possible conditions for spatial-sign covariance matrices from heavy-tailed data in the presence of dependence.
\end{abstract}

\begin{keyword}[class=MSC]
\kwd[Primary ]{	62H10 }
\kwd[; secondary ]{	62E17}
\end{keyword}

\begin{keyword}
  \kwd{Heavy-tailed data}
  \kwd{limiting spectral statistics}
   \kwd{linear spectral statistics}
   \kwd{random matrix theory}
  \kwd{self-normalization} 
  \kwd{spatial-sign covariance matrices}
\end{keyword}

\end{frontmatter}

\section{Introduction}
Covariance matrices are widely used in multivariate analysis, including principal component analysis, canonical correlation analysis and discriminant analysis. 
For a $p$-dimensional sample $\X_1,\cdots,\X_n \in \mR^p$, the sample covariance matrix is defined as 
\begin{align*}
    \bS=\frac{1}{n}\sum_{i=1}^n\X_i\X_i\trans.
\end{align*}
For high-dimensional data analysis, the theoretical properties of the sample covariance matrix are crucial for establishing the theoretical foundations of many statistical methods. Under multivariate Gaussian assumption, the famous Wishart distribution describes the distribution of $\bS$. To relax the Gaussian assumption and study the behavior of covariance matrices under more general conditions, we consider the independent components model (ICM), i.e.,
\begin{align*}
    \begin{pmatrix}
    \X_1\trans\\
    \vdots\\
    \X_n\trans
    \end{pmatrix}=\begin{pmatrix}
    X_{11} &\cdots &X_{1p}\\
    \vdots & \cdots & \vdots\\
    X_{n1} &\cdots &X_{np}\\
    \end{pmatrix}=\begin{pmatrix}
        Z_{11} &\cdots &Z_{1p}\\
        \vdots & \cdots & \vdots\\
        Z_{n1} &\cdots &Z_{np}\\
        \end{pmatrix}
    \bSig^{1/2}, 
\end{align*}
where $\{Z_{ij},i,j=1,2,\cdots\}$ are independent and identically distributed (i.i.d.) random variables and $\bSig \in \mR^{p \times p}$ is a positive definite matrix. For simplicity, we assume zero mean for the random variables, e.g., $\E Z_{11}=0$.

For the independent components model, random matrix theory extends the classical setting with fixed $p$ to the large-dimensional regime, i.e., 
\begin{align*}
    n\to\infty,\quad p=p_n\to\infty,\quad p/n=y_n\to y\in(0,\infty).
\end{align*}
The seminal work by \citet{marvcenko1967distribution} studied the empirical spectral distribution (ESD) of $\bS$ which converges weakly to the Mar{\u{c}}enko-Pastur (MP) law when $\E Z^2_{11}<\infty$ and $\bSig$ is an identity matrix.   \citet{yin1986limiting} and \citet{silverstein1995strong} extended MP law to general $\bSig$. Assuming $\E Z^2_{11}<\infty$ (without loss of generality, $\E Z^2_{11}=1$)  and regularly conditions on $\bSig$, the limiting spectral distribution (LSD) of the sample covariance matrix converges weakly to a non-random distribution whose Stieltjes transform $m=m(z)\in\mC^+$ is the unique solution to the Mar{\u{c}}enko-Pastur equation
\begin{align} \label{mp-eq}
    m=\int\frac{1}{t(1-y-yzm)-z}dH(t),
\end{align} 
where $H(\cdot)$ is the LSD of $\bSig$. With the LSD, we can obtain the limits of the linear spectral statistics (LSS), e.g.,
\begin{align*}
    \frac{1}{p} \sum_{i=1}^p f(\lambda_i(\bS))=\int f(x) dF^{\bS}(x),
\end{align*}
where $\lambda_i(\bS),~i=1,\cdots,p$ are eigenvalues of $\bS$ and $F^{\bS}$ is the ESD of $\bS$. For the independent components model, \citet{bai2004clt} firstly obtained the central limit theorem (CLT) for the LSS of the sample covariance matrix under the condition $\E Z^4_{11}=3$ and \citet{pan2008central} extended the CLT to the general $\E Z^4_{11}<\infty$. For a comprehensive overview of random matrix theory in statistics, see \citet{bai2010spectral} for example.

From a statistical perspective, the moment condition $\E Z^2_{11}<\infty$ is necessary for the existence of the population covariance matrix, $\cov(\X)$, and for the consistent estimation of the sample covariance matrix. For heavy-tailed distributions where $\E Z^2_{11}=\infty$, the sample covariance matrix may not be a reliable estimator, and more robust alternatives are needed. In this work, we consider the sample spatial-sign covariance matrix (SCM) which is defined as
\begin{align}
    \B=\frac{1}{n}\sum_{i=1}^n\frac{p \X_i\X_i\trans}{\X_i \trans \X_i}.
\end{align}
The SCM was first proposed by \citet{locantore1999robust} for robust principal component analysis. Since then, it has been extensively developed and applied in robust multivariate analysis. See, for example, \citet{visuri2000sign, magyar2014asymptotic, durre2014spatial, durre2015spatial, feng2016multivariate, li2016simpler, chakraborty2017tests} and references therein.

Intuitively, the SCM has entries bounded, exhibits robustness to outliers, and reflects the correlation structure among variables in a similar way to the sample correlation matrix. We expect the SCM to exhibit similar spectral properties to the sample covariance matrix, particularly with respect to the limiting spectral distribution and eigenvalue behavior, even for heavy-tailed populations. Specifically, we focus on  the $\alpha$-regularly varying distribution, a classical model for heavy-tailed data, where $\alpha>0$ controls the decay rate of the tail probability. Similar to the stable law of CLT for universal variables, random matrices with heavy-tailed entries often exhibit complex behavior and typically require non-standard normalization coefficients. For instance, \citet{ben2008spectrum} investigated the limiting spectral distribution of Wigner matrices with heavy-tailed entries and proposed a novel normalization constant. \citet{belinschi2009spectral} extended this analysis to renormalized covariance matrices with heavy-tailed populations. Furthermore, \citet{benaych2014central} established the central limit theorem for linear spectral statistics of Wigner matrices with heavy-tailed entries. Besides the complex renormalizing constant, which depends on the unknown $\alpha$, the limit results in \citet{ben2008spectrum, belinschi2009spectral, benaych2014central} are far from the standard results in random matrix theory for light-tailed distributions. However, the SCM is a self-normalizing version of the sample covariance matrix. The property $\tr(\B)=p$ implies that the SCM is inherently scaled, even for heavy-tailed distributions. Furthermore, since the population spatial-sign covariance matrix, $\E (\B)$, always exists for any $\alpha$, we expect that traditional results for the sample covariance matrix, such as the Mar\u{c}enko-Pastur law and the central limit theorem for linear spectral statistics, may hold under a more general range of $\alpha$ for the SCM. In this work, we focus on the following two questions:
\begin{itemize}
    \item \textbf{Question 1}: Under what range of $\alpha$, the traditional Mar{\u{c}}enko-Pastur equation \eqref{mp-eq} holds for the LSD of SCM?  
    \item \textbf{Question 2}: Under what range of $\alpha$, the traditional CLT for the LSS holds for SCM?
\end{itemize}

In statistics, another self-normalized covariance matrix is the sample correlation matrix. Denoting $\X_{\cdot i}=\left(X_{1i},\cdots,X_{ni}\right)\trans$, the sample correlation matrix is defined as
\begin{align*}
    \R=\left(\frac{\X_{\cdot1}}{\|\X_{\cdot1}\|_2},\cdots,\frac{\X_{\cdot p}}{\|\X_{\cdot p}\|_2}\right)\trans\left(\frac{\X_{\cdot1}}{\|\X_{\cdot1}\|_2},\cdots,\frac{\X_{\cdot p}}{\|\X_{\cdot p}\|_2}\right).
\end{align*}
When $\bSig=\bI$, the LSD of $\R$ was firstly derived by \citet{jiang2004limiting} and the CLT for LSS was studied by  \citet{gao2017high}. For general $\bSig$, it is referred to \citet{heiny2022large} for LSD and \citet{zheng2019test} for CLT. See also \citet{dornemann2022limiting} and \citet{yin2023central}. For heavy-tailed populations with $\bSig=\bI$, \citet{heiny2022limiting} found a new curious LSD under infinite second moment and \citet{heiny2024log} proved a CLT for log-determinant function under infinite fourth moment. Noting that
\begin{align*}
    \B=\frac{p}{n}\left(\frac{\X_1}{\|\X_1\|_2},\cdots,\frac{\X_n}{\|\X_n\|_2}\right)\left(\frac{\X_1}{\|\X_1\|_2},\cdots,\frac{\X_n}{\|\X_n\|_2}\right)\trans,
\end{align*}
the spatial-sign covariance matrix $\B$ has the same structure as $\R$ by interchanging $p$ and $n$ when $\bSig=\bI$. The related results under heavy-tailed populations such as \citet{heiny2022limiting} and \citet{heiny2024log} are still applicable for $\B$. However, for general $\bSig$, the structure of $\B$ becomes completely different with $\R$.

Recently, \citet{li2022eigenvalues} derived the LSD and CLT for the spatial-sign covariance matrix. Specifically, they established that the Mar{\u{c}}enko-Pastur equation \eqref{mp-eq} holds under the moment condition $\E|Z|^{4+\epsilon}<\infty$ and a CLT for general LSS holds when $\E|Z|^5<\infty$. See also \citet{yang2021testing} which considered the CLT of LSS under the weaker condition $\E|Z|^{4}=3$ or $\bSig=\bI$. The limiting behavior of eigenvectors and the related CLT were studied by  \citet{xu2023eigenvectors} under the moment condition $\E|Z|^{8}<\infty$.  In this work, we investigate the LSD and CLT of the spatial-sign covariance matrix for data from heavy-tailed populations with general covariance matrix $\bSig$. Our approach leverages the elegant properties of self-normalized random variables and explores the concentration of quadratic forms. Regarding Question 1, we demonstrate that the Mar{\u{c}}enko-Pastur equation holds under the condition $\alpha \geq 2$. For Question 2, we show that the CLT result of \citet{li2022eigenvalues} continue to  hold under the condition $\alpha>4$. For general $\bSig$, these conditions are shown to be nearly necessary and the weakest possible for the LSD and CLT, respectively.     

The rest of the paper is organized as follows. In Section 2, we present the concentration results for self-normalized random variables. In Section 3, we derive the LSD and CLT results, respectively.  In Section 4, we conduct numerical experiments to justify our theoretical results. In Appendix, we present detailed proofs of our theoretical results.

\section{Preliminaries}\label{sec2}
\subsection{Heavy-tailed populations}
In this paper, we study heavy-tailed populations which are defined by $\alpha$-regularly varying distributions.
\begin{definition}[$\alpha$-regularly varying distribution]
A random variable $Z$ is said to be $\alpha$-regularly varying if 
\begin{align*}
    P(|Z|>x)\sim x^{-\alpha}l(x),
\end{align*} 
where $l$ is a slowly varying function satisfying 
\begin{align*}
    \frac{l(\lambda x )}{l(x)}\to 1,\quad x\to\infty,\quad\forall\lambda>0.
\end{align*}
\end{definition}
For a regularly varying random variable with tail index $\alpha>0$, we have
\begin{gather*}
   \E |Z|^\beta<\infty,~\mbox{if}~\beta<\alpha,\\
   \E |Z|^\beta=\infty,~\mbox{if}~\beta>\alpha.
\end{gather*}
Some commonly used heavy-tailed populations are listed as follows.
\begin{example}[Student's $t$-distribution]\label{exm:t-distribution}
Student's $t$-distribution has the probability density function given by 
\begin{align*}
    f_\alpha(t)=\frac{\Gamma(\frac{\alpha+1}{2})}{\sqrt{\alpha\pi}\Gamma(\frac{\alpha}{2})}\left(1+\frac{t^2}{\alpha}\right)^{-\frac{\alpha+1}{2}}:=c_\alpha\left(\alpha+t^2\right)^{-\frac{\alpha+1}{2}}\sim c_\alpha t^{-\alpha-1},
\end{align*}
where $\alpha>0$ is the numbers of degrees of freedom. It is not difficult to verify that $t$-distribution is also $\alpha$-regularly varying. 
\end{example}
\begin{example}[Pareto distribution]
The probability density of Pareto distribution $F_{\alpha,\theta}$ with parameters $(\alpha,\theta)$ is
\begin{align*}
    f_{\alpha,\theta}(x)=\frac{\alpha\theta^\alpha}{(x+\theta)^{\alpha+1}},\quad x>0.
\end{align*}
For any given $\theta>0$, $F_{\alpha,\theta}$ is $\alpha$-regularly varying.
\end{example}
\subsection{Self-normalized random variables}
For i.i.d. random variables $Z_1,\ldots,Z_p$ from an $\alpha$-regularly varying population, we consider the self-normalized variables
\begin{align*}
    Y_i=\frac{Z_i}{\sqrt{Z_1^2+\cdots+Z_p^2}}.
\end{align*}
Throughout the paper, we assume $P(Z_1=0)=0$. By the property of self-normalization, we have the trivial result 
\begin{align*}
    \E Y_i^2=\frac{1}{p}.
\end{align*}
Furthermore, by exploring the identity $Y_1^2+\ldots+Y_p^2=1$, one can show
\begin{align*}
    1=p \E Y_1^4+p(p-1)\E Y_1^2Y_2^2. 
\end{align*}
Thus, to study the moments of self-normalized variables $Y_1,\ldots,Y_p$, we consider the moment
 \begin{align*}
    \E Y_1^{k_1}\cdots Y_r^{k_r},
 \end{align*}
 for general integers $k_1,\cdots,k_r>0$ and $r\leq p$.

By the identity of Gamma function
\begin{align*}
    \frac{1}{x^\beta}=\frac{1}{\Gamma(\beta)}\int_0^\infty e^{-tx}t^{\beta-1}dt,
\end{align*}
and Fubini's theorem, we have
\begin{align}
    \E Y_1^{k_1}\cdots Y_r^{k_r}=&\E \left( Z_1^{k_1}\cdots Z_{r}^{k_r}\frac{1}{\Gamma(\frac{k}{2})}\int_0^\infty e^{-s(Z_1^2+\cdots+Z_p^2)}s^{\frac{k}{2}-1}ds\notag \right)\\
    =&\frac{1}{\Gamma(\frac{k}{2})}\int_0^\infty s^{\frac{k}{2}-1}\prod_{i=1}^r\E Z_i^{k_i}e^{-sZ_i^2}\prod_{i=r+1}^p\E e^{-sZ_i^2}ds\notag\\
    =&\frac{1}{\Gamma(\frac{k}{2})}\int_0^\infty s^{\frac{k}{2}-1}\prod_{i=1}^r\E Z_i^{k_i}e^{-sZ_i^2}\varphi^{p-r}(s)ds,
    \label{form:calculating_integral}
\end{align}
where $k=k_1+\cdots+k_r$ and $\varphi(s)=\E e^{-sZ_1^2}$ is the Laplace transformation. Furthermore, for even numbers, 
\begin{align*}
	\E Z_i^{2k}e^{-sZ_i^2}=(-1)^k \varphi^{(k)}(s),
\end{align*}
where $ \varphi^{(k)}(s)$ is the $k$-th order derivative of $\varphi(s)$. Thus, asymptotic results of the general moment $\E Y_1^{2k_1}\cdots Y_r^{2k_r}$ can be obtained by analyzing the Laplace transformation $\varphi$.  We refer to \citet{bingham1989regular} and \citet{shao2013self} for more details on the self-normalized random variables.
\begin{prop}\label{lem:even_number}
    If $Z_1,Z_2,\cdots$ are i.i.d. regularly varying with $\alpha\geq 2$, for fixed positive integer $r>0$, we have
    \begin{align*}
        p^r\cdot\E Y_1^{2k_1}\cdots Y_r^{2k_r}\to\begin{cases}
        0,& \#\{1\leq i\leq r:k_i\geq 2\}\geq 1,\\
        1,& k_1=\cdots=k_r=1.
        \end{cases}
    \end{align*}
\end{prop}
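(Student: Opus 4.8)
The plan is to work directly from the integral representation \eqref{form:calculating_integral}. Writing $W=Z_1^2$, $\varphi(s)=\E e^{-sW}$, and $\psi_k(s):=\E[W^k e^{-sW}]=(-1)^k\varphi^{(k)}(s)\ge 0$, and specializing \eqref{form:calculating_integral} to the even exponents $2k_1,\dots,2k_r$, I obtain, with $K=k_1+\cdots+k_r$,
\begin{align*}
 \E Y_1^{2k_1}\cdots Y_r^{2k_r}=\frac{1}{\Gamma(K)}\int_0^\infty s^{K-1}\Big(\prod_{i=1}^r\psi_{k_i}(s)\Big)\varphi^{p-r}(s)\,ds,
\end{align*}
which is manifestly nonnegative. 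Since $\varphi(s)<1$ for $s>0$, the factor $\varphi^{p-r}(s)$ pushes the mass of the integral toward $s=0$ as $p\to\infty$, so the whole problem reduces to the behaviour of $\varphi$ and its derivatives near the origin.

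The inputs I would record are all consequences of regular variation of $W$ (index $\gamma:=\alpha/2\ge 1$, slowly varying part $L$) together with Karamata's Tauberian theorem. First, $h(s):=1-\varphi(s)=s\int_0^\infty e^{-sw}\overline F(w)\,dw$ is regularly varying at $0$ of index $1$ for every $\alpha\ge 2$; its slowly varying part converges to $\E W$ when $\alpha>2$ and to $\widetilde L(1/s):=\int_0^{1/s}\overline F\to\infty$ when $\alpha=2$. Second, $\psi_1=h'$ is decreasing (since $\psi_1'=-\psi_2<0$), so the monotone density theorem yields $s\psi_1(s)\sim h(s)$. Third, for $k>\gamma$ Karamata's Tauberian theorem applied to the measure $w^k\,dF(w)$ gives $\psi_k(s)\sim\gamma\Gamma(k-\gamma)s^{-(k-\gamma)}L(1/s)$, whereas for $k<\gamma$ one simply has $\psi_k(s)\to\E W^k<\infty$.

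Next I would change variables $u=h(s)$ (legitimate since $h'=\psi_1>0$), reducing the quantity to $\frac{1}{\Gamma(K)}\int_0^1 G(s(u))(1-u)^{p-r}\,du$ with $G(s)=s^{K-1}\psi_1(s)^{-1}\prod_i\psi_{k_i}(s)$. By the three inputs above, $G$ is regularly varying of some index $\theta$ with slowly varying part $\ell^*$, and an Abelian computation against the Beta-type kernel $(1-u)^{p-r}$ gives $\int_0^1 u^\theta\ell^*(u)(1-u)^{p-r}\,du\sim\Gamma(\theta+1)\,\ell^*(1/p)\,p^{-(\theta+1)}$. Assembling, $p^r\,\E\prod Y_i^{2k_i}\sim\frac{\Gamma(\theta+1)}{\Gamma(K)}\ell^*(1/p)\,p^{\,r-1-\theta}$, where the exponent simplifies to
\begin{align*}
 r-1-\theta=\sum_{i:\,k_i>\gamma}(1-\gamma)+\sum_{i:\,k_i\le\gamma}(1-k_i)\le 0.
\end{align*}
When all $k_i=1$ the exponent is $0$ and input (ii), $s\psi_1\sim h$, pins $\ell^*(1/p)\to1$, producing the limit $1$; when some $k_j\ge 2$ and $\alpha>2$ the exponent is strictly negative, so the limit is $0$.

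The hard part is the boundary case $\alpha=2$ when some $k_j\ge 2$. There $\gamma=1$, every index with $k_i\ge2$ is a blow-up index contributing $1-\gamma=0$, and every index with $k_i=1$ contributes $1-k_i=0$, so the power count collapses to $p^0$ and the required decay must come entirely from the slowly varying factor. To extract it I would track the slowly varying parts explicitly: each of the $m:=\#\{i:k_i\ge2\}\ge1$ factors $\psi_{k_i}$ supplies an $L(1/s)$, while the denominator $\psi_1\sim\widetilde L(1/s)$ supplies the competing growth, giving $\ell^*(u)\asymp\big(L(1/u)/\widetilde L(1/u)\big)^m$; the de Haan relation $L/\widetilde L\to0$ (equivalent to $\E W=\infty$ at $\alpha=2$) then forces $\ell^*(1/p)\to0$ and hence the limit $0$. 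The remaining technical work is to make the Abelian step uniform in the slowly varying corrections and to control the contribution of $s$ bounded away from $0$ (harmless, since $\varphi^{p-r}$ decays geometrically there while $G$ grows only polynomially).
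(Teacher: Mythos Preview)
Your approach is correct and shares the paper's high-level strategy --- the Laplace representation \eqref{form:calculating_integral} followed by Tauberian analysis near $s=0$ --- but the execution differs. The paper splits the integral at a fixed $\varepsilon$, disposes of $\int_\varepsilon^\infty$ via $p^r\varphi^{p-r}(\varepsilon)\to0$, and then treats $\int_0^\varepsilon$ by three separate explicit substitutions: $t=ps\,\E Z^2$ when $\alpha>2$ or when $\alpha=2$ with $\E Z^2<\infty$, and $t=a_ps$ with the norming constants of Lemma~\ref{lem:norming_constants} when $\alpha=2$ and $\E Z^2=\infty$; each reduces to a Gamma-type integral $\int t^{\cdot}e^{-t}\,dt$. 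Your single change of variables $u=h(s)=1-\varphi(s)$ unifies all three cases into one Abelian computation against the Beta kernel $(1-u)^{p-r}$, with the case distinction absorbed into the regular-variation index $\theta$ and the slowly varying factor $\ell^*$. This is conceptually cleaner but hides more machinery: the inversion $s=h^{-1}(u)$ brings in the de Bruijn conjugate of the slowly varying part of $h$, which you need to track to justify the claimed form of $\ell^*$; and at $\alpha=2$ the relation $L/\widetilde L\to0$ holds in \emph{both} sub-cases (when $\E W=\infty$ by the de Haan argument you cite, and when $\E W<\infty$ because then $L\to0$ while $\widetilde L\to\E W$), not only when $\E W=\infty$ as your parenthetical suggests. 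With those two points made explicit your argument is complete.
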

By Proposition \ref{lem:even_number}, we have
\begin{gather*}
	\E Y_1^2=\frac{1}{p}(1+o(1)), \quad	\E Y_1^2 Y_2^2=\frac{1}{p^2}(1+o(1)),  \\p \E Y_1^4=1-p(p-1)\E Y_1^2Y_2^2=o(1).
\end{gather*}
If $Z_i$ is symmetrically distributed, it allows to neglect all expectations of odd powers. Generally, to tackle odd numbers $k_1,k_2,\cdots$, \citet{gine1997student} provided a bound and here we refine their results to a more general case.
\begin{prop}\label{lem:odd_number_2}
    If $Z_1,Z_2,\cdots$ are i.i.d. regularly varying with $\alpha\geq 2$ and $\E Z_1=0$, for fixed positive integer $r>0$, we have
    \begin{align*}
        \E Y_1^{k_1}\cdots Y_r^{k_r}=o(p^{-r}),
    \end{align*}
    where $k=k_1+\cdots+k_r$ is even, and at least one of $k_1,\cdots,k_r$ is odd.
    \end{prop}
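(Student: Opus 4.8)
The plan is to argue directly from the integral representation \eqref{form:calculating_integral}: writing $g_m(s):=\E Z^m e^{-sZ^2}$, the quantity to control is $\E Y_1^{k_1}\cdots Y_r^{k_r}=\Gamma(k/2)^{-1}\int_0^\infty s^{k/2-1}\prod_{i=1}^r g_{k_i}(s)\,\varphi^{p-r}(s)\,ds$, and the target $o(p^{-r})$ asks that this be of strictly smaller order than the $p^{-r}$ produced in Proposition~\ref{lem:even_number} by the case $k_1=\cdots=k_r=2$. Since $k$ is even the number of odd indices is even and hence at least two, and I will track where the needed extra smallness comes from for each type of index. The heuristic is that the integral concentrates on a scale $s\asymp 1/p$ governed by $\varphi^{p}(s)\approx e^{-p(1-\varphi(s))}$, and the order is read off from the behaviour of the integrand there.

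The key observations are as follows. First, the mean-zero hypothesis is used exactly at the indices with $k_i=1$: since $\E Z=0$ we may write $g_1(s)=\E Z(e^{-sZ^2}-1)$, whence $|g_1(s)|\le \E|Z|(1-e^{-sZ^2})=:\psi(s)$, and $\psi(s)\to0$ as $s\to0$. This vanishing is essential — without it an index $k_i=1$ would contribute $g_1(0)=\E Z\ne0$ and force the far-too-large order $p^{-(\sum k_i)/2}$. Second, for every index with $k_i\ge2$ (odd or even) no cancellation is needed: I bound $|g_{k_i}(s)|\le \E|Z|^{k_i}e^{-sZ^2}=:h_{k_i}(s)$, where for even $k_i$ one may equivalently write $g_{k_i}(s)=(-1)^{k_i/2}\varphi^{(k_i/2)}(s)$, connecting to the machinery of Proposition~\ref{lem:even_number}. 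Third, because $Z$ is $\alpha$-regularly varying the three transforms $1-\varphi(s)$, $\psi(s)$ and $h_{k_i}(s)$ are all regularly varying at $s=0$ with indices computable by Karamata's/Abelian--Tauberian theorems: $\psi(s)\asymp s^{\min(1,(\alpha-1)/2)}$ up to a slowly varying factor, while $h_{k_i}(s)$ stays bounded for $k_i<\alpha$ and is $\asymp s^{-(k_i-\alpha)/2}$ (times slowly varying) for $k_i>\alpha$.

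Feeding these into the integral and evaluating it by a Laplace/Tauberian argument at the scale $s\asymp1/p$ reduces everything to power counting. Writing $m_1=\#\{i:k_i=1\}$, the resulting exponent of $p^{-1}$ is $\tfrac{k}{2}+m_1\min(1,\tfrac{\alpha-1}{2})-\sum_{k_i\ge2}\max(0,\tfrac{k_i-\alpha}{2})$, and a short computation shows it exceeds $r$ by $m_1\big(\min(1,\tfrac{\alpha-1}{2})-\tfrac12\big)+\sum_{k_i\ge2}\big(\tfrac{k_i}{2}-\max(0,\tfrac{k_i-\alpha}{2})-1\big)$; each summand is nonnegative, and the presence of at least one odd index makes at least one summand strictly positive whenever $\alpha>2$, giving $o(p^{-r})$. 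The main obstacle is the borderline $\alpha=2$, where this power count returns the exponent $r$ exactly (and moreover $\E Z^2$ may be infinite, so the concentration scale is the smaller $s\asymp 1/(p\,\ell(p))$ with $\ell\to\infty$); here the strict decay must instead be extracted from the slowly varying factors. One shows, using $\psi(s)\asymp s^{1/2}\ell(1/\sqrt s)$ and $1-\varphi(s)\asymp s\,\ell_1(1/s)$ with $\ell_1(p)=\int_0^p P(Z^2>x)\,dx\to\infty$ dominating $\ell(\sqrt p)$, that the accumulated slowly varying ratio tends to $0$, which upgrades the estimate to the desired $o(p^{-r})$.
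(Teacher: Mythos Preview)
Your approach is genuinely different from the paper's and, for $\alpha>2$, essentially correct. The paper does not redo the integral computation at all: it simply quotes the Gin\'e--G\"otze--Mason result (Lemma~\ref{lem:odd_number_1}) that the bound $o(p^{-r})$ holds whenever \emph{some} $k_i$ equals $1$, and then inducts on the size of an odd exponent via the identity $\sum_{i=1}^p Y_i^2=1$. Multiplying $\E Y_1^{2m+1}Y_2^{k_2}\cdots Y_r^{k_r}$ by $\sum_i Y_i^2$ produces $\E Y_1^{2(m+1)+1}Y_2^{k_2}\cdots Y_r^{k_r}$ plus terms already covered by the inductive hypothesis, so the general odd case follows in two lines. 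This is much shorter but relies on a black box; your direct Laplace-integral bound is self-contained and, in fact, re-derives the Gin\'e--G\"otze--Mason argument in greater generality.

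There is, however, a gap in your $\alpha=2$ sketch. Your slowly-varying analysis is written only for the factors $\psi(s)$ coming from indices with $k_i=1$: you compare $\ell(\sqrt{\,\cdot\,})$ against $\ell_1$, which is the correct comparison there. But the proposition also covers tuples such as $k_1=k_2=3$ with no index equal to $1$, so $\psi$ never appears. For those indices you only use the crude bound $|g_{k_i}(s)|\le h_{k_i}(s)=\E|Z|^{k_i}e^{-sZ^2}$, whose slowly-varying prefactor at $\alpha=2$ is $l(s^{-1/2})$, not the $\ell_1$ you invoke. The argument can be repaired---one checks $h_{k_i}(s)\sim C\,s^{(2-k_i)/2}l(s^{-1/2})$ and then uses $l(a_p^{1/2})/\tilde{l}(a_p)\le l(a_p^{1/2})/\tilde{l}(a_p^{1/2})\to0$ (de Haan) in the $\E Z^2=\infty$ case and $l(x)\to0$ in the $\E Z^2<\infty$ case---but as written your $\alpha=2$ paragraph does not cover it. The paper's induction sidesteps this entirely, since it never needs to analyze odd exponents larger than one directly.
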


To tackle CLT, we need more refined results under a stronger condition $\alpha>4$.
\begin{prop}\label{prop:even_number4}
Assuming  $Z_1,Z_2,\cdots$ are i.i.d. regularly varying with $\alpha>4$ and denoting $I_1=\{i:k_i=1\}$, $I_2=\{i:k_i=2\}$ and $I_3=\{i:k_i\geq 3\}$, we have 
\begin{align*}
    p^{|I_1|+2|I_2|}\cdot\E Y_1^{2k_1}\cdots Y_r^{2k_r} \to (\E Z^2)^{-2|I_2|}(\E Z^4)^{|I_2|},
\end{align*}
when $|I_3|=0$. If $|I_3|\geq 1$, there exists $0<\epsilon<\min(1,\alpha/2-1)$ such that
        \begin{align*}
            p^{|I_1|+2|I_2|+(2+\epsilon)|I_3|}\cdot\E Y_1^{2k_1}\cdots Y_r^{2k_r} \to 0.
        \end{align*}
    \end{prop}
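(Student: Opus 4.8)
The plan is to specialize the Gamma-integral representation \eqref{form:calculating_integral} to the even exponents $2k_1,\dots,2k_r$. Writing $W=Z_1^2$, $\varphi(s)=\E e^{-sW}$ and $K=k_1+\cdots+k_r$, and using $\E[Z^{2k}e^{-sZ^2}]=\E[W^ke^{-sW}]=(-1)^k\varphi^{(k)}(s)$, one obtains
\begin{align*}
 \E Y_1^{2k_1}\cdots Y_r^{2k_r}=\frac{1}{\Gamma(K)}\int_0^\infty s^{K-1}\prod_{i=1}^r\E\bigl[W^{k_i}e^{-sW}\bigr]\,\varphi^{p-r}(s)\,ds.
\end{align*}
Since $\alpha>4$ forces $\E W=\E Z_1^2<\infty$, we have $\varphi(s)=1-s\,\E Z_1^2+o(s)$ near the origin, so $\varphi^{p-r}(s)$ concentrates its mass at scale $s\sim 1/p$; this dictates the rescaling $s=u/p$, under which $\varphi^{p-r}(u/p)\to e^{-u\,\E Z_1^2}$. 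The whole statement then reduces to tracking the three kinds of factors $\E[W^{k_i}e^{-sW}]$ along $s=u/p\to 0$, together with the control $\int_0^\infty s^{a}\varphi^{p-r}(s)\,ds\asymp p^{-(a+1)}$ for $a>-1$.

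For the case $|I_3|=0$ every factor is bounded and monotone: as $s\downarrow 0$, $\E[We^{-sW}]\uparrow\E Z_1^2$ for $i\in I_1$ and $\E[W^2e^{-sW}]\uparrow\E Z_1^4$ for $i\in I_2$, each dominated by its finite limit. Substituting $s=u/p$ turns the representation into $p^{-K}\Gamma(K)^{-1}\int_0^\infty u^{K-1}(\cdots)\,du$, and dominated convergence — with envelope $C\,u^{K-1}e^{-cu}$, using $\varphi^{p-r}(u/p)\le e^{-cu}$ on the bulk $s\le s_0$ and discarding the tail as below — produces the limiting integral $(\E Z_1^2)^{|I_1|}(\E Z_1^4)^{|I_2|}\int_0^\infty u^{K-1}e^{-u\,\E Z_1^2}\,du$. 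Since $\int_0^\infty u^{K-1}e^{-u\,\E Z_1^2}\,du=\Gamma(K)(\E Z_1^2)^{-K}$ and $K=|I_1|+2|I_2|$, multiplying by $p^{K}=p^{|I_1|+2|I_2|}$ gives the claimed limit $(\E Z_1^2)^{|I_1|-K}(\E Z_1^4)^{|I_2|}=(\E Z_1^2)^{-2|I_2|}(\E Z_1^4)^{|I_2|}$.

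For the case $|I_3|\ge 1$ only an upper bound is required, the point being that each index with $k_i\ge 3$ costs strictly more than the two powers of $p$ spent by an index of $I_2$. Choose any $\epsilon\in\bigl(0,\min(1,\alpha/2-2)\bigr)$, a nonempty subinterval of the stated range because $\alpha/2-2>0$ and $\min(1,\alpha/2-2)\le\min(1,\alpha/2-1)$, and fix $\epsilon'\in(\epsilon,\alpha/2-2)$, so that $\E W^{2+\epsilon'}=\E|Z|^{4+2\epsilon'}<\infty$. The elementary inequality $w^{\,k_i-2-\epsilon'}e^{-sw}\le C\,s^{-(k_i-2-\epsilon')}$ (valid since $k_i-2-\epsilon'>0$) yields $\E[W^{k_i}e^{-sW}]\le C\,s^{-(k_i-2-\epsilon')}\,\E W^{2+\epsilon'}$ for $i\in I_3$, while the $I_1,I_2$ factors stay bounded by $\E Z_1^2,\E Z_1^4$. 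Substituting these bounds leaves $\int_0^\infty s^{a}\varphi^{p-r}(s)\,ds$ with $a=K-1-\sum_{i\in I_3}(k_i-2-\epsilon')$, whence $\E Y_1^{2k_1}\cdots Y_r^{2k_r}\le C\,p^{-(a+1)}=C\,p^{-(|I_1|+2|I_2|+(2+\epsilon')|I_3|)}$; multiplying by $p^{|I_1|+2|I_2|+(2+\epsilon)|I_3|}$ produces $C\,p^{-(\epsilon'-\epsilon)|I_3|}\to 0$.

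The main obstacle is the uniform-in-$p$ estimate behind $\int_0^\infty s^{a}\varphi^{p-r}(s)\,ds\asymp p^{-(a+1)}$ and the dominated-convergence step. The plan is to split the integral at a fixed $s_0$: on $[0,s_0]$ one uses $\varphi(s)\le e^{-cs}$, coming from $\varphi(s)=1-s\,\E Z_1^2+o(s)$, to compare with $\int_0^\infty s^a e^{-c(p-r)s}\,ds=\Gamma(a+1)\bigl(c(p-r)\bigr)^{-(a+1)}$, while on $[s_0,\infty)$ one uses $\varphi(s)\le\varphi(s_0)=:\rho<1$ to bound the contribution by $\rho^{\,p-r-\gamma}$ times a convergent integral, which is $o(p^{-(a+1)})$; here $\gamma$ is chosen so that $s^a\varphi^{\gamma}(s)$ is integrable on $[s_0,\infty)$, using $\varphi(s)\to 0$ as $s\to\infty$ since $W>0$ almost surely. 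As a sharper alternative to the elementary bound, Karamata's Abelian theorem gives the exact blow-up $\E[W^{k_i}e^{-sW}]\sim c_{k_i}s^{-(k_i-\alpha/2)}\ell(1/s)$ whenever $\E W^{k_i}=\infty$, and hence the precise order $\E Y_1^{2k_1}\cdots Y_r^{2k_r}\asymp p^{-(|I_1|+2|I_2|+\sum_{i\in I_3}\min(k_i,\alpha/2))}$; this confirms that the excess exponent per $I_3$ index equals $\min(k_i,\alpha/2)-2\ge\min(1,\alpha/2-2)>0$, so any $\epsilon$ below this threshold — in particular one lying in the stated range — validates the claim.
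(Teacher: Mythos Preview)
Your argument follows the same route as the paper: write the moment as a Laplace integral, control the tail $[s_0,\infty)$ separately, rescale $s=u/p$ on the bulk, and for $i\in I_3$ use the pointwise bound $\E[W^{k_i}e^{-sW}]\le C\,(\E W^{2+\epsilon'})\,s^{-(k_i-2-\epsilon')}$ with $4+2\epsilon'<\alpha$. The paper writes this same bound as $|\varphi^{(k)}(s)|\le c_k\,\E|Z|^{\gamma}\,s^{\gamma/2-k}$ with $\gamma=4+2\epsilon'$, performs the substitution $t=ps\,\E Z^2$, and reaches the identical exponent $|I_1|+2|I_2|+(2+\epsilon')|I_3|$; your choice $\epsilon<\epsilon'$ is precisely the paper's $\beta<\gamma$.

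There is one genuine gap. On $[s_0,\infty)$ you assert that $\int_{s_0}^\infty s^{a}\varphi^{\gamma}(s)\,ds<\infty$ for some large $\gamma$, ``using $\varphi(s)\to 0$.'' Convergence to zero is not enough: the hypotheses give no control on the lower tail of $W=Z^2$, and if $P(W<t)\sim 1/\log(1/t)$ as $t\downarrow 0$ then $\varphi(s)\gtrsim 1/\log s$, so $s^{a}\varphi^{\gamma}(s)$ is never integrable for $a\ge 0$. The paper avoids this by bounding the tail of the \emph{original} integral before replacing the $I_3$ factors: since $\int_0^\infty s^{K-1}\prod_i|\varphi^{(k_i)}(s)|\,ds=\Gamma(K)\,\E\bigl[Z_1^{2k_1}\cdots Z_r^{2k_r}/(Z_1^2+\cdots+Z_r^2)^K\bigr]\le\Gamma(K)$, the contribution from $[s_0,\infty)$ is at most $\varphi^{p-r}(s_0)\,\Gamma(K)$, which is exponentially small in $p$. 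Replacing your tail step with this one-line bound repairs the argument.
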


\begin{prop}\label{prop:alpha>4_general_number}
    If $Z_1,Z_2,\cdots$ are i.i.d. regularly varying with $\alpha>4$ and $\E Z_1=0$, for fixed positive integer $r>0$, these exist $0<\epsilon'<\min\{1/2,\alpha/2-1\}$ such that
    \begin{align*}
        \E Y_1^{k_1}\cdots Y_r^{k_r}=o\left(p^{-\frac{3}{2}|J_1|-\frac{1}{2}\sum_{i\in J_2}k_i-(2+\epsilon')|J_3|}\right),
    \end{align*}
    where $J_1=\{j:k_j=1\}$, $J_2=\{j:2\leq k_j\leq4\}$ and $J_3=\{j:k_j\geq 5\}$.
\end{prop}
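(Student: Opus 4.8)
The plan is to start from the integral representation \eqref{form:calculating_integral}, which expresses the target moment as
\[
\E Y_1^{k_1}\cdots Y_r^{k_r}=\frac{1}{\Gamma(k/2)}\int_0^\infty s^{k/2-1}\prod_{i=1}^r g_{k_i}(s)\,\varphi^{p-r}(s)\,ds,
\]
where $g_{k_i}(s)=\E Z^{k_i}e^{-sZ^2}$, $k=\sum_i k_i$, and $\varphi(s)=\E e^{-sZ^2}$, and to read off the order in $p$ by analyzing the three factors separately. Since $\alpha>4$ guarantees $\E Z^2,\E Z^4<\infty$, the Laplace transform obeys $\varphi(s)=1-(\E Z^2)s+o(s)$ near the origin, so $\varphi^{p-r}(s)\approx e^{-p(\E Z^2)s}$ concentrates the mass of the integral on the scale $s\sim 1/p$. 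The substitution $s=t/p$ then produces a factor $p^{-k/2}$ out of $s^{k/2-1}\,ds$, and the remaining task is to determine the additional power of $p$ contributed by each $g_{k_i}(t/p)$ as $p\to\infty$.

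Next I would record the small-$s$ behaviour of $g_{k_i}$ in the three regimes $J_1,J_2,J_3$. For $i\in J_1$ (so $k_i=1$) the mean-zero hypothesis is decisive: writing $g_1(s)=\E[Z(e^{-sZ^2}-1)]$ and expanding gives $g_1(s)=-(\E Z^3)\,s+O(s^2)$, with $\E Z^3$ finite because $\alpha>3$, so each such factor contributes the extra power $s\sim p^{-1}$, raising its weight from the scaling value $1/2$ to $3/2$. For $i\in J_2$ (so $2\le k_i\le 4<\alpha$) one has $g_{k_i}(s)\to\E Z^{k_i}$ bounded, so these factors contribute only the scaling weight $k_i/2$, matching the term $\tfrac12\sum_{i\in J_2}k_i$. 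For $i\in J_3$ (so $k_i\ge 5$) there are two cases: if $k_i<\alpha$ then $g_{k_i}$ stays bounded and the weight $k_i/2\ge 5/2$ already exceeds $2+\epsilon'$; the genuinely delicate case is $k_i\ge\alpha$, where $\E|Z|^{k_i}=\infty$ and $g_{k_i}$ diverges at the origin.

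For the divergent case I would exploit the regular variation of $W=Z^2$ (index $\alpha/2$) together with a Karamata/Tauberian estimate to obtain $|g_{k_i}(s)|\le\E|Z|^{k_i}e^{-sZ^2}\sim c\,s^{-(k_i-\alpha)/2}\ell(1/s)$ as $s\downarrow 0$, with $\ell$ slowly varying. After the rescaling $s=t/p$ this factor grows like $p^{(k_i-\alpha)/2}$ up to the slowly varying correction, which cancels against the scaling so that the net weight from each $J_3$ index is $\min\{k_i/2,\alpha/2\}$. Since $\alpha/2>2$, this is strictly larger than $2$, and choosing $\epsilon'$ small enough (in particular below the stated threshold $\min\{1/2,\alpha/2-1\}$) absorbs the slowly varying factor and yields the claimed weight $2+\epsilon'$ per such index. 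Assembling the three contributions, and controlling the rescaled integral by dominated convergence after isolating the region where $s$ leaves the scale $1/p$ and $\varphi^{p-r}(s)$ decays geometrically, produces the stated bound.

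The main obstacle I anticipate is twofold. First, making the $s\sim 1/p$ heuristic rigorous simultaneously across all factors: the product $\prod_i g_{k_i}(t/p)$ mixes factors that vanish linearly (from $J_1$), stay bounded (from $J_2$), and blow up like a negative power times a slowly varying function (from divergent $J_3$ indices), so one must exhibit a $p$-uniform integrable envelope for $t^{k/2-1}\prod_i g_{k_i}(t/p)e^{-t\E Z^2}$, controlling both the possible singularity near $t=0$ and the decay for large $t$. Second, the odd-power factors require care: the crude bound $|g_{k_i}(s)|\le\E|Z|^{k_i}e^{-sZ^2}$ must be combined with the cancellation forced by $\E Z=0$ for $k_i=1$, which is where the refinement of \citet{gine1997student} enters. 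The most demanding point is reconciling these effects to upgrade the resulting order estimate to the strict little-$o$ claimed, namely tracking how the mean-zero cancellation for $J_1$ interacts with the slowly varying divergence for $J_3$, and this is the step I would address last.
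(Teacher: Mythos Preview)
Your overall plan matches the paper's proof: the same integral representation, the same three-way split $J_1,J_2,J_3$, the same treatment of $J_1$ via $g_1(0)=0$ and the mean-value theorem (giving $|g_1(s)|\lesssim s$, which needs only $\E|Z|^3<\infty$ and no appeal to \citet{gine1997student}), the same treatment of $J_2$ via $|g_{k_i}(s)|\le\E|Z|^{k_i}<\infty$, and the same substitution $s=t/p$ with $\varphi^{p-r}(t/p)\to e^{-t}$.

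The one tactical difference, worth noting because it dissolves exactly the obstacle you anticipate, is in the $J_3$ factors. Rather than invoking Karamata/Tauberian asymptotics and then having to absorb a slowly varying correction and build a $p$-uniform integrable envelope, the paper fixes any $\beta\in(4,\min\{\alpha,5\})$ and writes
\[
\bigl|g_{k_i}(s)\bigr|\le \E\bigl[|Z|^{\beta}\cdot|Z|^{k_i-\beta}e^{-sZ^2}\bigr]\le \E|Z|^{\beta}\cdot\sup_{t>0}\bigl(t^{k_i-\beta}e^{-st^2}\bigr)\lesssim s^{-(k_i-\beta)/2}.
\]
This is a uniform pointwise inequality on all of $(0,\infty)$, not an asymptotic relation, so the product $\prod_i |g_{k_i}(s)|$ is bounded by an explicit power of $s$ and the integral collapses directly to $\int_0^\varepsilon s^{\gamma-1}\varphi^{p-r}(s)\,ds$ with $\gamma=\tfrac32|J_1|+\tfrac12\sum_{i\in J_2}k_i+\tfrac{\beta}{2}|J_3|$. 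The substitution then gives $O(p^{-\gamma})$, and since $\beta/2>2$ one takes any $\epsilon'<\beta/2-2$ to obtain the stated little-$o$. No slowly varying functions appear, no dominated-convergence subtlety arises, and no case split on whether $k_i<\alpha$ or $k_i\ge\alpha$ is needed. Your route would also go through, but the elementary splitting $|Z|^{k_i}=|Z|^{\beta}\cdot|Z|^{k_i-\beta}$ buys you all of that for free.
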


\subsection{Quadratic forms}
Denoting
\begin{align*}
    \Z=\begin{pmatrix}
        Z_1\\
        \vdots\\
        Z_p
    \end{pmatrix},\quad \Y= \begin{pmatrix}
        Y_1\\
        \vdots\\
        Y_p
    \end{pmatrix}=\frac{1}{\sqrt{Z_1^2+\cdots+Z_p^2}}\begin{pmatrix}
        Z_1\\
        \vdots\\
        Z_p
    \end{pmatrix}=\frac{\Z}{\|\Z\|_2},
\end{align*}
we consider the concentration bounds of the quadratic forms
\begin{align*}
    \Y\trans\A\Y,\quad \Y\trans\B\Y,  
\end{align*}
where $\A, \B$ are non-random $p\times p$ symmetric matrices. 
\begin{prop}
    \label{prop:moments_of_self-normalized_quadratic_forms}
    If $Z_1,Z_2,\cdots$ are i.i.d. regularly varying with $\alpha\geq 2$ and $\E Z_1=0$, we have 
    \begin{align*}
        \E\Y\trans\A\Y=\frac{\tr \A}{p}+o(\frac{1}{p}) \|\A\|,
    \end{align*}
 and 
 \begin{align*}
    \E\left|\Y\trans\A\Y-\frac{1}{p}\tr\A\right|^k=o(1)\|\A\|^k,~k \geq 2.
 \end{align*}   
\end{prop}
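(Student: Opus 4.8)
The two displays are handled separately. First I split $\Y\trans\A\Y=\sum_i A_{ii}Y_i^2+\sum_{i\neq j}A_{ij}Y_iY_j$. The diagonal part is exact: since $\E Y_i^2=1/p$, one has $\sum_i A_{ii}\E Y_i^2=\tr\A/p$. For the off-diagonal part, exchangeability of $Z_1,\dots,Z_p$ makes $\E Y_iY_j$ equal to the common value $\E Y_1Y_2$ for every $i\neq j$, and Proposition \ref{lem:odd_number_2}, applied with $r=2$ and $k_1=k_2=1$ (even total degree, both exponents odd), gives $\E Y_1Y_2=o(p^{-2})$. Hence $\sum_{i\neq j}A_{ij}\E Y_iY_j=o(p^{-2})\,(\mathbf 1\trans\A\,\mathbf 1-\tr\A)$, and since $|\mathbf 1\trans\A\,\mathbf 1|\le p\|\A\|$ and $|\tr\A|\le p\|\A\|$, this remainder is $o(p^{-1})\|\A\|$, which is the first assertion.

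\textbf{A traceless reduction.} The identity $\|\Y\|_2^2=\sum_i Y_i^2=1$ organises the higher moments. It gives $\tr\A/p=(\tr\A/p)\,\Y\trans\Y$, so that
\begin{align*}
\Y\trans\A\Y-\frac{\tr\A}{p}=\Y\trans\Big(\A-\frac{\tr\A}{p}\bI\Big)\Y=:\Y\trans\widetilde\A\,\Y,
\end{align*}
where $\widetilde\A$ is traceless, $\tr\widetilde\A=0$, and $\|\widetilde\A\|\le\|\A\|+|\tr\A|/p\le2\|\A\|$. By Lyapunov's inequality it suffices to treat an even exponent $2m\ge k$, and since $\Y\trans\widetilde\A\,\Y$ is real the goal reduces to
\begin{align*}
\E\big(\Y\trans\widetilde\A\,\Y\big)^{2m}=o(1)\,\|\widetilde\A\|^{2m},
\end{align*}
whence $\E|\Y\trans\A\Y-\tr\A/p|^{k}\le\big(\E(\Y\trans\widetilde\A\,\Y)^{2m}\big)^{k/2m}=o(1)\|\A\|^{k}$.

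\textbf{The main estimate.} Expanding,
\begin{align*}
\E\big(\Y\trans\widetilde\A\,\Y\big)^{2m}=\sum \widetilde A_{i_1j_1}\cdots\widetilde A_{i_{2m}j_{2m}}\,\E\big[Y_{i_1}Y_{j_1}\cdots Y_{i_{2m}}Y_{j_{2m}}\big],
\end{align*}
I group the $4m$ slots by common value into $r$ distinct values with multiplicities $d_1,\dots,d_r$. Propositions \ref{lem:even_number} and \ref{lem:odd_number_2} give the joint moment as $p^{-r}(1+o(1))$ when every $d_s=2$ and as $o(p^{-r})$ otherwise. If some $d_s\neq2$ then $r<2m$, and the crude bound $p^{r}\|\widetilde\A\|^{2m}$ on the coefficient sum ($r$ free indices and $2m$ factors each at most $\|\widetilde\A\|$ in modulus) makes such a pattern $o(p^{-r})\cdot p^{r}\|\widetilde\A\|^{2m}=o(1)\|\widetilde\A\|^{2m}$. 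The leading patterns, with every $d_s=2$ and $r=2m$, need more care: regarding the $2m$ factors as edges and the values as vertices of a multigraph in which every vertex has degree two, the coefficient sum $\sum_{\text{distinct values}}\prod_{\text{edges}}\widetilde A$ factorises over the cycles of this graph. A self-loop, corresponding to a within-factor diagonal pairing, contributes the factor $\tr\widetilde\A=0$, so tracelessness annihilates every self-loop pattern; a genuine cycle of length $t\ge2$ contributes $\tr(\widetilde\A^{\,t})$, bounded by $p\,\|\widetilde\A\|^{t}$. With $C$ cycles, $2m=\sum_c t_c\ge2C$, so the pattern is of order $p^{C}p^{-2m}\|\widetilde\A\|^{2m}\le p^{-m}\|\widetilde\A\|^{2m}$. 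As there are finitely many patterns (a number depending only on $m$), summation gives $\E(\Y\trans\widetilde\A\,\Y)^{2m}=o(1)\|\widetilde\A\|^{2m}$.

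\textbf{Main obstacle.} The delicate step is the factorisation of the coefficient sum of a leading pattern over the cycles of its multigraph. Since the $r=2m$ values must be distinct, this factorisation holds only up to inclusion--exclusion corrections in which values coincide; such corrections involve strictly fewer free indices and are lower order, but they must be handled without discarding signs, because the vanishing of the self-loop contributions rests on the exact cancellation $\tr\widetilde\A=0$ and not on any absolute bound. Carrying the signs through this bookkeeping, and checking that every estimate is uniform over symmetric $\A$ of a given operator norm, is the technical core; once this is done, the cycle bounds $|\tr(\widetilde\A^{\,t})|\le p\|\widetilde\A\|^{t}$ together with $C\le m$ complete the proof.
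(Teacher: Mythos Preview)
Your treatment of the mean is the same as the paper's and is correct.

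For the higher moments you take a genuinely different route. The paper does not expand $(\Y\trans\widetilde\A\,\Y)^{2m}$ combinatorially at all: it simply observes that $\|\Y\|_2=1$ forces the deterministic bound
\[
\Bigl|\Y\trans\A\Y-\tfrac{1}{p}\tr\A\Bigr|\le |\Y\trans\A\Y|+\tfrac{|\tr\A|}{p}\le 2\|\A\|,
\]
so that for every $k>2$,
\[
\E\Bigl|\Y\trans\A\Y-\tfrac{1}{p}\tr\A\Bigr|^k\le (2\|\A\|)^{k-2}\,\E\Bigl|\Y\trans\A\Y-\tfrac{1}{p}\tr\A\Bigr|^2,
\]
and then computes only the variance explicitly (splitting into diagonal and off-diagonal sums and invoking Propositions~\ref{lem:even_number} and~\ref{lem:odd_number_2}). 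This exploits the self-normalization $\|\Y\|_2=1$ in the most elementary possible way and reaches the stated $o(1)$ bound in a few lines. Your graph expansion is the natural method if one wants a sharper rate---and is essentially what the paper does in Proposition~\ref{prop:alpha>4_moments_of_self-normalized_quadratic_forms} under $\alpha>4$ to obtain $o(p^{-1})$ for $k\ge3$---but for the present proposition it is considerable overkill.

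Two remarks on your sketch itself. First, the assertion ``if some $d_s\neq 2$ then $r<2m$'' is false (take $d_1=1$, $d_2=3$, the remaining $d_s=2$); fortunately your bound $o(p^{-r})\cdot p^{r}=o(1)$ never uses it. Second, the ``main obstacle'' is less delicate than you fear: once the unconstrained sum factorises as $\prod_c\tr(\widetilde\A^{\,t_c})$ (where tracelessness kills every self-loop), all inclusion--exclusion corrections involve at most $2m-1$ free values, so the crude absolute bound $O_m(1)\,p^{2m-1}\|\widetilde\A\|^{2m}$ already yields a contribution $O(p^{-1})$ after multiplication by the moment $\sim p^{-2m}$. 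No sign cancellation is needed in the correction terms; signs matter only in the leading unconstrained sum, and there the cancellation is immediate. In particular your claimed $O(p^{-m})$ for the leading patterns is optimistic once corrections are included, but $o(1)$ is all the proposition demands.
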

The concentration of quadratic forms will play an important role in deriving LSD. For CLT, we need more refined results under a stronger condition $\alpha>4$. 
\begin{prop}\label{prop:alpha>4_moments_of_self-normalized_quadratic_forms}
    If $Z_1,Z_2,\cdots$ are i.i.d. regularly varying with $\alpha>4$ and $\E Z_1=0$, we have 
    \begin{align*}
        \E \Y\trans\A\Y=\frac{\tr\A}{p}+O(\frac{1}{p^2})\|\A\|,
    \end{align*}
 and 
\begin{align*}
   &\E \left( \Y \trans \A \Y-\frac{1}{p}\tr \A\right) \left( \Y \trans \B \Y-\frac{1}{p}\tr \B\right)=&\frac{(\tau-3)}{p^2}\tr(\A\circ\B)+\frac{2}{p^2}\tr(\A\B)\\
   &+\frac{1-\tau}{p^3}\tr(\A)\tr(\B)+o(\frac{1}{p})\|\A\|\|\B\|,
\end{align*}
where $\tau=\E Z_1^4$. For $k \geq 3$, 
    \begin{align*}
        \E\left|\Y\trans\A\Y-\frac{1}{p}\tr\A\right|^k=
            o(p^{-1})\|\A\|^k.
    \end{align*}
\end{prop}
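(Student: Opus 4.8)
The plan is to expand every quadratic form entrywise and reduce all three assertions to the moment asymptotics of Propositions \ref{prop:even_number4} and \ref{prop:alpha>4_general_number}, exploiting that $(Y_1,\dots,Y_p)$ is exchangeable, so that $\E[Y_iY_jY_kY_l]$ depends only on the coincidence pattern of its indices. For the first identity I would write $\E\,\Y\trans\A\Y=\sum_i a_{ii}\E Y_i^2+\sum_{i\neq j}a_{ij}\E Y_iY_j$. Since $\E Y_1^2=1/p$ exactly, and by Proposition \ref{prop:alpha>4_general_number} with $k_1=k_2=1$ one has $\E Y_1Y_2=o(p^{-3})$, while $|\sum_{i\neq j}a_{ij}|\leq p\|\A\|$ (bounding $\mathbf 1\trans\A\mathbf 1$ and $\tr\A$ separately), the off-diagonal part is $o(p^{-2})\|\A\|$, giving $\E\,\Y\trans\A\Y=\tr(\A)/p+O(p^{-2})\|\A\|$.

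For the covariance I would expand $\E[(\Y\trans\A\Y)(\Y\trans\B\Y)]=\sum_{i,j,k,l}a_{ij}b_{kl}\,\E[Y_iY_jY_kY_l]$ and classify the four positions by the partition they induce through value-coincidence. Only the pattern ``all four equal'' and the three ``two distinct values, each of multiplicity two'' patterns survive at order $p^{-1}$. The all-equal pattern contributes $\E Y_1^4\cdot\tr(\A\circ\B)$ with $\E Y_1^4=\tau/p^2+o(p^{-2})$ from Proposition \ref{prop:even_number4} ($|I_2|=1$, using $\E Z_1^2=1$); the pairing $(ij)(kl)$ contributes $\E Y_1^2Y_2^2\,[\tr(\A)\tr(\B)-\tr(\A\circ\B)]$, while the pairings $(ik)(jl)$ and $(il)(jk)$ each contribute $\E Y_1^2Y_2^2\,[\tr(\A\B)-\tr(\A\circ\B)]$ (using the symmetry of $\A,\B$). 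The crucial refinement is the second-order expansion of $\E Y_1^2Y_2^2$: combining the exact identity $1=p\,\E Y_1^4+p(p-1)\E Y_1^2Y_2^2$ with $\E Y_1^4=\tau/p^2+o(p^{-2})$ yields $\E Y_1^2Y_2^2=p^{-2}+(1-\tau)p^{-3}+o(p^{-3})$. The $(1-\tau)p^{-3}$ correction is exactly what produces the $\frac{1-\tau}{p^3}\tr(\A)\tr(\B)$ term, since $\tr(\A)\tr(\B)=O(p^2)$ is the only coefficient block large enough to promote a $p^{-3}$ weight to leading order. Collecting the $\tr(\A\circ\B)$, $\tr(\A\B)$ and $\tr(\A)\tr(\B)$ coefficients, and then subtracting the centering terms $\tfrac{\tr\A}{p}\E\,\Y\trans\B\Y+\tfrac{\tr\B}{p}\E\,\Y\trans\A\Y-\tfrac{\tr\A\tr\B}{p^2}$ (which by the first identity equals $\tfrac{1}{p^2}\tr(\A)\tr(\B)+o(p^{-1})\|\A\|\|\B\|$ and hence cancels the $\tfrac{1}{p^2}\tr(\A)\tr(\B)$ term), delivers the stated expression.

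The bookkeeping then requires showing that every non-surviving pattern is $o(p^{-1})\|\A\|\|\B\|$. Patterns with an index of odd multiplicity (types $(3,1)$ and $(2,1,1)$) are handled by Proposition \ref{prop:alpha>4_general_number}, which forces moments of order $o(p^{-3})$ and $o(p^{-4})$ respectively, and the fully distinct pattern uses $\E[Y_1Y_2Y_3Y_4]=o(p^{-6})$. In each case one pairs the moment decay against an elementary spectral-norm bound on the associated coefficient sum, e.g. $|\tr(\A\circ\B)|\leq\|\A\|_F\|\B\|_F\leq p\|\A\|\|\B\|$, $|\tr(\A\B)|\leq p\|\A\|\|\B\|$, $|\sum_{i,j,l}a_{ij}b_{il}|=|(\A\mathbf 1)\trans(\B\mathbf 1)|\leq p\|\A\|\|\B\|$, and $|\sum_{i,j}a_{ij}|\leq p\|\A\|$, all of which follow from $\|\A\|_F^2=\tr(\A^2)\leq p\|\A\|^2$ and Cauchy--Schwarz.

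For the higher moments, set $W=\Y\trans\A\Y-\tr(\A)/p=\sum_i a_{ii}(Y_i^2-p^{-1})+\sum_{i\neq j}a_{ij}Y_iY_j$. By Lyapunov's inequality it suffices to treat even orders, so for $k\geq 3$ I would bound $\E W^{2m}$ with $2m\in\{k,k+1\}$ and $m\geq 2$. Expanding $\E W^{2m}$ over $2m$-fold index configurations, the dominant contribution comes from pairing the $2m$ factors into $m$ pairs with disjoint index sets: each matched pair forces a squared entry contributing $\sum_{i\neq j}a_{ij}^2\leq\|\A\|_F^2\leq p\|\A\|^2$, while the $2m$ doubled indices give a moment of order $p^{-2m}$ by Proposition \ref{prop:even_number4}, so the balance is $p^m\cdot p^{-2m}\|\A\|^{2m}=O(p^{-m})\|\A\|^{2m}$; all other configurations (odd multiplicities, or indices shared across more than two factors) are lower order by Propositions \ref{prop:even_number4} and \ref{prop:alpha>4_general_number}. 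Since $m\geq 2$, this gives $\E W^{2m}=O(p^{-m})\|\A\|^{2m}=o(p^{-1})\|\A\|^{2m}$, hence $\E|W|^k=o(p^{-1})\|\A\|^k$. I expect this last step, the combinatorial control of $\E W^{2m}$, to be the main obstacle: one must verify uniformly that no configuration of shared or odd-multiplicity indices beats the perfect-matching order $O(p^{-m})$, which is precisely where the strengthened moment bounds for $\alpha>4$ (rather than the $\alpha\geq 2$ bounds underlying Proposition \ref{prop:moments_of_self-normalized_quadratic_forms}) become indispensable.
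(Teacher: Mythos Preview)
Your treatment of the first two assertions is correct and essentially coincides with the paper's argument: the same diagonal/off-diagonal expansion for the mean, and the same partition-pattern expansion for the covariance, with the key second-order input $\E Y_1^2Y_2^2=p^{-2}+(1-\tau)p^{-3}+o(p^{-3})$ extracted from the identity $p\E Y_1^4+p(p-1)\E Y_1^2Y_2^2=1$. The only cosmetic difference is that the paper expands the centered product $(\Y\trans\A\Y-\tr\A/p)(\Y\trans\B\Y-\tr\B/p)$ directly, while you expand the uncentered product and subtract afterwards; both are fine.

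For the third assertion, however, you take a harder route than necessary, and the sketch as written has a gap. The paper does \emph{not} attack $\E W^{2m}$ for general $m$. Instead it exploits the deterministic bound $|W|\le 2\|\A\|$ (immediate from $\|\Y\|_2=1$) to write, for $k>4$,
\[
\E|W|^k\le(2\|\A\|)^{k-4}\,\E|W|^4,
\]
and handles $k=3$ via Cauchy--Schwarz between $k=2$ and $k=4$. Thus only the single case $k=4$ needs to be analyzed. For that case the paper splits $W$ into its diagonal and off-diagonal parts, treats $\E(\sum_i a_{ii}(Y_i^2-1/p))^4$ using the centering identity $\sum_i(a_{ii}-\tr\A/p)=0$ (which kills one power of $p$ in the coefficient sums), and treats the off-diagonal fourth moment via a coefficient-sum lemma bounding $\sum^*_{i_1,\dots,i_r} a_{i_{j_1}i_{j_2}}\cdots a_{i_{j_{2k-1}}i_{j_{2k}}}$ by graph-theoretic arguments (Lemma~\ref{lem:summation_of_sigma}). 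Your sketch for general $2m$ lacks both ingredients: you do not discuss the diagonal block or mixed diagonal/off-diagonal terms, and your claim that ``all other configurations \dots\ are lower order'' requires exactly the kind of coefficient-sum control furnished by Lemma~\ref{lem:summation_of_sigma}, which you do not invoke. Your approach can be completed, but the paper's reduction to $k=4$ via the almost-sure bound is much shorter.
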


\begin{prop}\label{prop:general_self_normalization}
Assume $Z_1,Z_2,\cdots$ are i.i.d. regularly varying with $\E Z_1=0$ and $\bSig \in \mR^{p \times p}$ is strictly positive definite matrix  with $\tr \bSig=p$.    
   \begin{itemize}
    \item When $\alpha \geq 2$:
    \begin{align*}
        \E \frac{\Y \trans \A  \Y }{\Y \trans \bSig \Y}=\frac{\tr\A}{p}+o(1)\frac{\|\A\|\|\bSig\|}{\lambda_{\min}(\bSig)},
     \end{align*}
     and 
     \begin{align*}
        \E\left|\frac{\Y\trans\A\Y}{\Y\trans\bSig\Y}-\frac{\tr\A}{p}\right|^k=o(1)\frac{\|\A\|^k\|\bSig\|^k}{\lambda_{\min}^{k}(\bSig)},~k \geq 2.
     \end{align*}   

     \item When $\alpha>4$:
     \begin{align*}
        \E\frac{\Y\trans\A\Y}{\Y\trans\bSig\Y}=&\frac{\tr\A}{p}+\frac{\tau_3}{p^3}\left( \tr \A \tr(\bSig \circ \bSig)-p \tr(\A \circ \bSig)  \right)\\
        &+\frac{2}{p^3}\left(\tr \A \tr(\bSig^2)-p \tr(\A \bSig) \right)+o(p^{-1})\frac{\|\A\|\|\bSig\|^3}{\lambda_{\min}(\bSig)}.
    \end{align*}
     and 
     \begin{align*}
        &\E\left(\frac{\Y\trans\A\Y}{\Y\trans\bSig\Y}-\frac{\tr\A}{p}\right)\left(\frac{\Y\trans\A\Y}{\Y\trans\bSig\Y}-\frac{\tr\B}{p}\right)\\
        =&\tau_3\left(\frac{\tr\A\tr\B\tr(\bSig\circ\bSig)}{p^4}+\frac{\tr(\A\circ\B)}{p^2}-\frac{\tr\A\tr(\bSig\circ\B)+\tr\B\tr(\bSig\circ\A)}{p^3}\right)\\
        &+2\left(\frac{\tr\A\tr\B\tr(\bSig^2)}{p^4}+\frac{\tr(\A\B)}{p^2}-\frac{\tr\A\tr(\bSig\B)+\tr\B\tr(\bSig\A)}{p^3}\right)\\
        &+o(p^{-1})\frac{\|\A\|\B\|\|\bSig\|^3}{\lambda_{\min}^2(\bSig)},
     \end{align*}
    where $\tau_3=\E Z_1^4-3$. For $k \geq 2$, we have 
    \begin{align*}
        \E\left|\frac{\Y\trans\A\Y}{\Y\trans\bSig\Y}-\frac{\tr\A}{p}\right|^k    =\begin{cases}
            O(p^{-1})\frac{\|\A\|^k\|\bSig\|^k}{\lambda_{\min}^{k}(\bSig)},&k=2,\\
            o(p^{-1})\frac{\|\A\|^k\|\bSig\|^k}{\lambda_{\min}^{k}(\bSig)},&k\geq 3.
        \end{cases}
    \end{align*}
   \end{itemize} 
    
\end{prop}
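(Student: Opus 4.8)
The plan is to reduce the generalized ratio to the ordinary self-normalized quadratic forms already controlled in Propositions \ref{prop:moments_of_self-normalized_quadratic_forms} and \ref{prop:alpha>4_moments_of_self-normalized_quadratic_forms}, using two elementary but crucial facts. The first is a \emph{traceless reduction}: setting $\widetilde\A=\A-(\tr\A/p)\bSig$ we get $\tr\widetilde\A=\tr\A-(\tr\A/p)\tr\bSig=0$ because $\tr\bSig=p$, and
\begin{align*}
    \frac{\Y\trans\A\Y}{\Y\trans\bSig\Y}-\frac{\tr\A}{p}=\frac{\Y\trans\widetilde\A\Y}{\Y\trans\bSig\Y}.
\end{align*}
The second is a \emph{deterministic denominator bound}: since $\|\Y\|_2=1$ we have $\Y\trans\bSig\Y\ge\lambda_{\min}(\bSig)$, hence $1/(\Y\trans\bSig\Y)\le 1/\lambda_{\min}(\bSig)$, and also $|\Y\trans\A\Y|\le\|\A\|$; moreover $\tr\bSig=p$ forces $\|\bSig\|\ge1\ge\lambda_{\min}(\bSig)$. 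These allow me to pass from ratios to plain quadratic forms on the whole probability space, with no truncation of the rare event $\{\Y\trans\bSig\Y\approx0\}$. The case $\alpha\ge2$ is then immediate: by the reduction and the denominator bound, $\E|\,\Y\trans\A\Y/\Y\trans\bSig\Y-\tr\A/p\,|^{k}\le\lambda_{\min}^{-k}(\bSig)\,\E|\Y\trans\widetilde\A\Y|^{k}$, and Proposition \ref{prop:moments_of_self-normalized_quadratic_forms} (with $\tr\widetilde\A=0$) gives $\E|\Y\trans\widetilde\A\Y|^{k}=o(1)\|\widetilde\A\|^{k}$ for $k\ge2$ and $\E|\Y\trans\widetilde\A\Y|=o(1)\|\widetilde\A\|$ by Cauchy--Schwarz; since $\|\widetilde\A\|\le2\|\A\|\|\bSig\|$, both stated bounds follow.

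For $\alpha>4$ I must be quantitative. Write $D=\Y\trans\bSig\Y-\tr\bSig/p=\Y\trans\bSig\Y-1$ and use the exact identity $1/(1+D)=1-D+D^2-D^3/(1+D)$; multiplying by $\Y\trans\A\Y$ and taking expectations,
\begin{align*}
    \E\frac{\Y\trans\A\Y}{\Y\trans\bSig\Y}=\E[\Y\trans\A\Y]-\E[\Y\trans\A\Y\,D]+\E[\Y\trans\A\Y\,D^2]-\E\Big[\Y\trans\A\Y\tfrac{D^3}{1+D}\Big].
\end{align*}
The first three terms are evaluated by Proposition \ref{prop:alpha>4_moments_of_self-normalized_quadratic_forms}: the mean formula gives $\E[\Y\trans\A\Y]=\tr\A/p+O(p^{-2})\|\A\|$, and writing $\Y\trans\A\Y=\tr\A/p+U$ I apply the covariance formula once with $(\A,\bSig)$ to get $\E[UD]$ and once with $(\bSig,\bSig)$ to get $\E[D^2]$, controlling the residuals $\E[UD^2]$ and $(\tr\A/p)\E[D]$ by Cauchy--Schwarz and the $k$-th moment bounds on $D$. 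A cancellation is essential here: the two terms proportional to $(1-\tau)(\tr\A/p)/p$ arising from $-\E[\Y\trans\A\Y D]$ and $+\E[\Y\trans\A\Y D^2]$ cancel exactly, and rewriting $\tfrac{\tau_3}{p^2}\tr(\A\circ\bSig)=\tfrac{\tau_3}{p^3}\,p\,\tr(\A\circ\bSig)$ reproduces the stated formula. The remainder is controlled deterministically by $|\Y\trans\A\Y|\le\|\A\|$ and $1/(1+D)\le1/\lambda_{\min}(\bSig)$:
\begin{align*}
    \Big|\E\Big[\Y\trans\A\Y\tfrac{D^3}{1+D}\Big]\Big|\le\frac{\|\A\|}{\lambda_{\min}(\bSig)}\,\E|D|^3=o(p^{-1})\frac{\|\A\|\|\bSig\|^3}{\lambda_{\min}(\bSig)},
\end{align*}
using $\E|D|^3=o(p^{-1})\|\bSig\|^3$ from the $k\ge3$ bound. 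Note a first-order expansion would leave a remainder of order $p^{-1}$, so expanding through $D^2$ is genuinely necessary.

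The covariance under $\alpha>4$ follows the same template. Applying the traceless reduction to both $\A$ and $\B$,
\begin{align*}
    \E\Big(\frac{\Y\trans\A\Y}{\Y\trans\bSig\Y}-\frac{\tr\A}{p}\Big)\Big(\frac{\Y\trans\B\Y}{\Y\trans\bSig\Y}-\frac{\tr\B}{p}\Big)=\E\frac{\Y\trans\widetilde\A\Y\,\Y\trans\widetilde\B\Y}{(\Y\trans\bSig\Y)^2}.
\end{align*}
Writing $(\Y\trans\bSig\Y)^{-2}=1+R$ with $R=(1+D)^{-2}-1=-(2D+D^2)/(1+D)^2$, the main term $\E[\Y\trans\widetilde\A\Y\,\Y\trans\widetilde\B\Y]$ is supplied by the covariance formula with $\tr\widetilde\A=\tr\widetilde\B=0$, so only the $\tau_3$- and $2$-terms survive; expanding $\tr(\widetilde\A\circ\widetilde\B)$ and $\tr(\widetilde\A\widetilde\B)$ via $\widetilde\A=\A-(\tr\A/p)\bSig$ and $\widetilde\B=\B-(\tr\B/p)\bSig$ produces exactly the displayed four-term expression. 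The correction $\E[\Y\trans\widetilde\A\Y\,\Y\trans\widetilde\B\Y\,R]$ is handled by the three-factor Hölder inequality with exponents $(3,3,3)$: the $k\ge3$ bounds give $(\E|\Y\trans\widetilde\A\Y|^3)^{1/3}=o(p^{-1/3})\|\widetilde\A\|$ and likewise for $\widetilde\B$, while $|R|\le\lambda_{\min}^{-2}(\bSig)(2|D|+D^2)$ yields $(\E|R|^3)^{1/3}=o(p^{-1/3})\|\bSig\|/\lambda_{\min}^{2}(\bSig)$ (the linear-in-$D$ part dominating), so the product is $o(p^{-1})\|\A\|\|\B\|\|\bSig\|^3/\lambda_{\min}^{2}(\bSig)$. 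The $k$-th moment bounds are again immediate from the reduction, $(\Y\trans\bSig\Y)^{-k}\le\lambda_{\min}^{-k}(\bSig)$, and the $\alpha>4$ moment bounds on $\Y\trans\widetilde\A\Y$.

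I expect the main obstacle to be the bookkeeping in the $\alpha>4$ first-moment and covariance expansions: ensuring that every residual produced when replacing $\Y\trans\A\Y$ by $\tr\A/p+U$, and when truncating the geometric series, is genuinely $o(p^{-1})$ with the correct powers of $\|\bSig\|$ and $\lambda_{\min}(\bSig)$, and verifying the exact cancellation of the $(1-\tau)$-terms needed to land on the clean stated formula. The deterministic bounds $|\Y\trans\A\Y|\le\|\A\|$ and $\Y\trans\bSig\Y\ge\lambda_{\min}(\bSig)$ are precisely what keep these remainder estimates clean, as they eliminate any separate treatment of the event that the denominator is atypically small.
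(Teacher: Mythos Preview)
Your proposal is correct, and for the $\alpha>4$ first-moment expansion you follow essentially the paper's route: both you and the paper expand $1/(1+D)=1-D+D^2-D^3/(1+D)$, compute the first three terms via Proposition~\ref{prop:alpha>4_moments_of_self-normalized_quadratic_forms}, and bound the cubic remainder deterministically by $\|\A\|\lambda_{\min}^{-1}(\bSig)\,\E|D|^3$.

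Where you genuinely diverge is in the \emph{traceless reduction} $\widetilde\A=\A-(\tr\A/p)\bSig$, which the paper does not use. For the $\alpha\ge2$ concentration bounds the paper instead writes $\frac{\Y\trans\A\Y}{\Y\trans\bSig\Y}-\frac{\tr\A}{p}=\bigl(\frac{\Y\trans\A\Y}{\Y\trans\bSig\Y}-\Y\trans\A\Y\bigr)+\bigl(\Y\trans\A\Y-\frac{\tr\A}{p}\bigr)$ and controls the two pieces separately; your single inequality $\E|\cdot|^k\le\lambda_{\min}^{-k}(\bSig)\,\E|\Y\trans\widetilde\A\Y|^k$ is shorter and sidesteps that split. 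The gain is more pronounced for the $\alpha>4$ covariance: the paper expands each factor as $-\Y\trans\A\Y\,D+\Y\trans\A\Y\,D^2/(1+D)+(\Y\trans\A\Y-\tr\A/p)$, multiplies out, and then reduces the resulting cross-products term by term to four applications of Proposition~\ref{prop:alpha>4_moments_of_self-normalized_quadratic_forms}. Your route collapses the whole thing to a single product $\E[\Y\trans\widetilde\A\Y\,\Y\trans\widetilde\B\Y]/(1+D)^{2}$; since $\tr\widetilde\A=\tr\widetilde\B=0$, one application of the covariance formula in Proposition~\ref{prop:alpha>4_moments_of_self-normalized_quadratic_forms} already yields $\frac{\tau_3}{p^2}\tr(\widetilde\A\circ\widetilde\B)+\frac{2}{p^2}\tr(\widetilde\A\widetilde\B)$, whose expansion in $\A,\B,\bSig$ is exactly the stated answer, and the $(1-\tau)$ cross-term vanishes automatically rather than having to cancel across several pieces. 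The H\"older-$(3,3,3)$ bound on the remainder $\E[\Y\trans\widetilde\A\Y\,\Y\trans\widetilde\B\Y\,R]$ is clean and correct. In short, both arguments rely on the same analytic input (Propositions~\ref{prop:moments_of_self-normalized_quadratic_forms} and~\ref{prop:alpha>4_moments_of_self-normalized_quadratic_forms}), but your algebraic packaging is more economical.
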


\begin{prop}\label{prop:cross_term_self_normalization}
    If $Z_1,Z_2,\cdots$ are i.i.d. regularly varying with $\alpha$ and $\E Z_1=0$, 
   \begin{itemize}
    \item $\alpha \geq 2$: 
    \begin{align*}
        \E \left| \sum_{i \neq j} a_{ij} Y_i Y_j\right|^k=\begin{cases}
            O(p^{-1})\|\A\|^k,&k=2,\\
            o(p^{-1})\|\A\|^k,&k\geq 3.
        \end{cases}
     \end{align*}

     \item $\alpha>4 $:
     \begin{align*}
        \E \left| \sum_{i \neq j} a_{ij} Y_i Y_j\right|^k=\begin{cases}
            O(p^{-\frac{k}{2}})\|\A\|^k,&2\leq k\leq 6,\\
            o(p^{-3})\|\A\|^k,&k\geq 7.
        \end{cases}
     \end{align*}
    
   \end{itemize} 

\end{prop}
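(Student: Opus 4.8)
Write $S=\sum_{i\neq j}a_{ij}Y_iY_j$, and note at the outset that we may take $\A$ symmetric, replacing $a_{ij}$ by $(a_{ij}+a_{ji})/2$ off the diagonal; this leaves $S$ unchanged (since $Y_iY_j=Y_jY_i$) and does not increase $\|\A\|$. The plan is to estimate the even integer moments $\E S^{2m}$ directly by the moment method and to recover the odd and intermediate orders through the Cauchy--Schwarz interpolation $\E|S|^{k}\le(\E S^{k-1})^{1/2}(\E S^{k+1})^{1/2}$. Since consecutive even moments differ by a single power of $p$, this transfers the even-order rates to the stated odd-order ones and preserves the $O$ versus $o$ dichotomy.

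First I would expand $S^{2m}$ and group monomials by the coincidence pattern of their $2m$ index pairs, encoding each pattern as a multigraph $G$ on the distinct index values, with one edge per factor and no self-loops (because $i_\ell\neq j_\ell$). Exchangeability of $Y_1,\dots,Y_p$ makes $\E[\prod_\ell Y_{i_\ell}Y_{j_\ell}]$ depend only on the degree sequence of $G$, so each shape contributes a scalar moment factor times a signed coefficient sum $\sum_{\text{distinct }x}\prod_{(u,v)\in E(G)}a_{x_ux_v}$. A vertex of odd degree forces an odd power of some $Y$, and such shapes are negligible by Proposition \ref{lem:odd_number_2} for $\alpha\ge2$ and, with an explicit rate, by Proposition \ref{prop:alpha>4_general_number} for $\alpha>4$; hence the surviving shapes have all degrees even, in particular $\ge2$. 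The deterministic workhorse is the bound that each connected component with $e$ edges and minimum degree $\ge2$ satisfies $|\sum_{\text{distinct }x}\prod a_{x_ux_v}|\le Cp\|\A\|^{e}$, i.e. one free factor of $p$ per component. I would prove it by iteratively summing out a degree-two vertex, which fuses its two incident edges into a single edge carrying $\A^2$ and lowers $(w,e)$ to $(w-1,e-1)$ without destroying connectivity or the minimum-degree-two property, collapsing each component to a single trace controlled by $|\tr(\A^{e})|\le p\|\A\|^{e}$ together with $\|\A^{j}\|_F^2=\tr(\A^{2j})\le p\|\A\|^{2j}$.

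Feeding in the moment factors of Propositions \ref{lem:even_number} and \ref{prop:even_number4}, a shape with $c$ components, $w$ vertices and degree data $(I_1,I_2,I_3)$ is bounded, up to a constant, by $p^{\,c-w-|I_2|-(1+\epsilon)|I_3|}\|\A\|^{2m}$, the penalties $|I_2|$ and $(1+\epsilon)|I_3|$ reflecting the extra smallness of the moment at vertices of degree four and of degree $\ge6$. Maximizing this exponent, the disjoint union of $m$ double edges attains $-m=-k/2$, whereas a single $2m$-fold edge on two vertices attains the competing value $-3-2\epsilon$. For $\alpha>4$ the maximum is therefore $\max(-k/2,-3-2\epsilon)$, equal to $-k/2$ for $k\le6$ and to $-3-2\epsilon<-3$ for $k\ge7$, which gives $O(p^{-k/2})$ and $o(p^{-3})$ respectively. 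For $\alpha\ge2$, Propositions \ref{lem:even_number}--\ref{lem:odd_number_2} certify only $o(p^{-w})$, without a rate, at the non-two-regular shapes, so the two-vertex shapes already cost $o(p^{-1})$ while the two-regular shapes still give the sharp $p^{-k/2}$; this yields $O(p^{-1})$ at $k=2$ and $o(p^{-1})$ for all $k\ge3$.

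The hard part is the coefficient-sum lemma and the verification that every non-leading shape is genuinely of lower order. Three points require care: signs must be retained throughout, since passing to $|a_{ij}|$ would replace $\|\A\|$ by the possibly much larger operator norm of the entrywise modulus; the extra factors of $\sqrt p$ generated by summing out the degree-one vertices of the odd-degree shapes must be shown to be swamped by the sharp decay $p^{-\frac32|J_1|-\cdots}$ of Proposition \ref{prop:alpha>4_general_number}; and one must confirm that the high-multiplicity two-vertex shapes, whose moments the coarse $\alpha\ge2$ estimates control only qualitatively, are exactly what prevents the sharp $p^{-k/2}$ rate and force the weaker $o(p^{-1})$ and $o(p^{-3})$ conclusions.
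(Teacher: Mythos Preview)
Your plan follows the same overall framework as the paper—expand the $k$-th moment, index monomials by a multigraph on the distinct index values, split into a moment factor (handled by Propositions \ref{lem:even_number}--\ref{prop:alpha>4_general_number}) and a deterministic coefficient sum, then add up—so the two arguments are close cousins. The genuine difference is in the deterministic piece. The paper does \emph{not} prove your component-wise bound $|\sum_{\text{distinct }x}\prod a|\le Cp^{c}\|\A\|^{e}$; it uses the cruder Lemma~\ref{lem:summation_of_sigma}, obtained by a single Cauchy--Schwarz step $|S|\le p^{r/2}\sqrt{\widetilde S}$ and then bounding the unrestricted positive sum $\widetilde S$, which yields $O(p^{(r+\lfloor r/4\rfloor)/2})\|\A\|^{k}$. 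That bound is weaker but is agnostic to signs, to the distinct-index constraint, and to the presence of degree-one vertices, so the paper can feed it directly into a short case split on $r$ (the number of distinct indices) rather than on the full graph shape. Your route would give a cleaner single-formula exponent count, but it buys that cleanliness at the cost of proving the sharper lemma: the iterative degree-two reduction you describe is stated for the unrestricted sum, and you still need an inclusion--exclusion (or induction on $w$) to pass from $\sum_{\text{distinct}}$ to $\sum$ without losing control of signs, together with a separate treatment of the degree-one vertices for the odd-degree shapes. Both approaches lead to the stated result; the paper's is coarser on the coefficient side and compensates with more bookkeeping, while yours is sharper on the coefficient side and shifts the work into the lemma. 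Your Cauchy--Schwarz interpolation for odd $k$ is also a nice explicit step that the paper leaves implicit.
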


\section{Main results}
We consider i.i.d. observations $\X_1,\cdots,\X_n$ following
\begin{align*}
  \X_i=\bSig^{\frac{1}{2}}\Z_i,\quad i=1,\cdots,n,
\end{align*}
where $\bSig$ is a $p\times p$ positive definite matrix and normalized by $\tr(\bSig)=p$, and the entries of $\{Z_{ij},i,j=1,2,\cdots\}$ are i.i.d. from $Z$. We assume the following regularity conditions.
\begin{itemize}
    \item [(a)] $n\to\infty$, $p\to\infty$ and $p/n=y_n\to y\in(0,\infty)$.
    \item [(b)] There exists a constant $c>1$, such that $1/c \leq\lambda_{\min}(\bSig)\leq\lambda_{\max}(\bSig)\leq c$.
    \item [(c)] $Z$ is centered and regularly varying with index $\alpha>0$. 
\end{itemize}
We consider the sample spatial-sign covariance matrix 
\begin{align*}
    \B=\frac{p}{n}\sum_{i=1}^{n}\frac{\X_i \X_i\trans}{\|\X_i\|^2_2}=\frac{1}{n}  \sum_{i=1}^{n} \frac{p \bSig^{1/2} \Z_i \Z_i \trans \bSig^{1/2}}{\Z_i \trans \bSig \Z_i}.
\end{align*}
\subsection{Limiting spectral distribution}\label{sec3}
Invoking 
\begin{align*}
    \B=\frac{1}{n}\left(\frac{\sqrt{p}\X_1}{\|\X_1\|_2},\cdots,\frac{\sqrt{p}\X_n}{\|\X_n\|_2}\right)\left(\frac{\sqrt{p}\X_1}{\|\X_1\|_2},\cdots,\frac{\sqrt{p}\X_n}{\|\X_n\|_2}\right)\trans,
\end{align*}
where the columns $\sqrt{p}\X_i/ \|\X_i\|_2,~i=1,\cdots,n$ are independent, \citet{bai2008large} established a general framework to study the LSD of such matrices.  To apply Theorem 1.1 of \citet{bai2008large}, firstly we need to calculate the population covariance matrix of the self-normalized vectors
\begin{align*}
    \T= \E \frac{p \X_1 \X_1\trans}{\|\X_1\|^2_2}=\E \frac{p \bSig^{1/2} \Z_1 \Z_1 \trans \bSig^{1/2}}{\Z_1 \trans \bSig \Z_1}
\end{align*}
and secondly we need to verify the concentrated bound
\begin{align*}
    \sup_{\|\A\|=1}\var\left(\frac{\X_1\trans\A\X_1}{\X_1 \trans \X_1}\right)=o(1),
\end{align*}
which has been studied in Proposition \ref{prop:general_self_normalization}. We next consider the population covariance matrix $\T$. Intuitively,    
\begin{align*}
    \Y_1 \trans \bSig \Y_1 \approx \frac{1}{p} \tr(\bSig)=1,~p \Y_1 \Y_1 \trans \approx \bI,
\end{align*}
which leads to 
\begin{align*}
    \T=\bSig^{1/2} \left(  \E \frac{p  \Y_1 \Y_1 \trans }{\Y_1 \trans \bSig \Y_1}  \right) \bSig^{1/2}\approx \bSig.
\end{align*}
For the special case $\bSig=\bI$,  we can explicitly get the bound 
\begin{align*}
    \|\T-\bI\|_2=&p(p-1)|\E Y_1 Y_2|,\\
\|\T-\bI\|_F^2=&p^3(p-1)|\E Y_1 Y_2|^2,
\end{align*}
which has been studied in Proposition \ref{lem:odd_number_2}. For general $\bSig$, we need to refine the previous results and bound the diagonal and off-diagonal elements separately. We present the main result in the following theorem. 
\begin{theorem} \label{thm:lsd1}
Under Assumptions (b) and (c), if $\alpha \geq 2$ and $p \to \infty$,
\begin{align*}
   \frac{1}{p} \left\|p\cdot\E\frac{\Y\Y\trans}{\Y\trans\bSig\Y}-\bI_p\right\|_F^2 \to 0.
\end{align*}
\end{theorem}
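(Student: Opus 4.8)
The plan is to use the symmetry of $\M:=p\,\E\big[\Y\Y\trans/(\Y\trans\bSig\Y)\big]$ to collapse the Frobenius bound into two scalar trace identities. Since $\M=\M\trans$,
\begin{align*}
\frac1p\big\|\M-\bI_p\big\|_F^2=\frac1p\big(\tr\M^2-2\tr\M+p\big),
\end{align*}
so it suffices to prove $\tr\M=p+o(p)$ and $\tr\M^2=p+o(p)$: the two leading $p$'s cancel against the $+p$, leaving $o(p)$, which after dividing by $p$ tends to $0$. Throughout I abbreviate $D=\Y\trans\bSig\Y$ and $\xi=D-1$, and I will lean on Assumption (b) through the deterministic bounds $\|\bSig\|\le c$ and $D\ge\lambda_{\min}(\bSig)\ge 1/c$ (so that $1/D\le c$).

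For $\tr\M$ I would use $\|\Y\|_2=1$ to get the exact identity $\tr\M=p\,\E(1/D)$ and then expand $1/D=1-\xi+\xi^2/D$. The first-order term contributes $\E\xi=\sum_{k\ne l}\Sigma_{kl}\,\E[Y_kY_l]$; exchangeability of the $Y_i$ gives $\E[Y_kY_l]=\E[Y_1Y_2]=o(p^{-2})$ by Proposition \ref{lem:odd_number_2}, and since $\big|\sum_{k\ne l}\Sigma_{kl}\big|=O(p)$ under (b), this term is $o(p^{-1})$. The remainder is controlled by $\E(\xi^2/D)\le c\,\E\xi^2=c\,\E\big|\Y\trans\bSig\Y-1\big|^2=o(1)$, which is exactly Proposition \ref{prop:moments_of_self-normalized_quadratic_forms} with $\A=\bSig$. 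Hence $\E(1/D)=1+o(1)$ and $\tr\M=p+o(p)$.

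For $\tr\M^2$ I would introduce an independent copy $(\Y',D')$ and use $\tr\M^2=\sum_{i,j}\M_{ij}^2=p^2\,\E\E'\big[(\Y\trans\Y')^2/(DD')\big]$. Splitting $1/(DD')=1+\big(1/(DD')-1\big)$, the main term is $p^2\,\E\E'(\Y\trans\Y')^2=p^2\sum_{k,l}(\E[Y_kY_l])^2=p^2\big(p^{-1}+p(p-1)\,(\E[Y_1Y_2])^2\big)=p+o(1)$, again by exchangeability and Proposition \ref{lem:odd_number_2}. For the correction I would bound $\big|1/(DD')-1\big|\le c^2|1-DD'|\le c^2(|\xi|+|\xi'|+|\xi\xi'|)$ and use the deterministic estimate $\E'(\Y\trans\Y')^2=\Y\trans\mathbf{Q}\Y\le\|\mathbf{Q}\|=O(p^{-1})$, where $\mathbf{Q}=\E[\Y\Y\trans]$ has diagonal entries $p^{-1}$ and off-diagonal entries $\E[Y_1Y_2]$, hence operator norm $p^{-1}+|\E[Y_1Y_2]|(p-1)=O(p^{-1})$ by Proposition \ref{lem:odd_number_2}. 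Pulling this deterministic factor out and using $\E|\xi|\le(\E\xi^2)^{1/2}=o(1)$ (symmetrically for $\xi'$, with the cross term $|\xi\xi'|$ handled by $2|\xi\xi'|\le\xi^2+\xi'^2$ and being smaller), the correction is at most $c^2p^2\cdot O(p^{-1})\cdot o(1)=o(p)$, so $\tr\M^2=p+o(p)$.

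Combining the two estimates gives $\tr\M^2-2\tr\M+p=o(p)$, and the theorem follows. The main obstacle is the self-normalizing denominator $D=\Y\trans\bSig\Y$: it blocks a clean entrywise analysis and forces the expand-and-estimate-the-remainder argument, and its mere positivity does not suffice; one genuinely needs the uniform lower bound $D\ge1/c$ from (b) to tame $1/D$, together with the second-moment concentration $\E|\Y\trans\bSig\Y-1|^2=o(1)$ of Proposition \ref{prop:moments_of_self-normalized_quadratic_forms}. Once those are available, the decisive point is the exact cancellation at leading order $p$ between $\tr\M$ and $\tr\M^2$, which is what drives the normalized Frobenius norm to zero.
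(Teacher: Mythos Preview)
Your argument is correct and takes a genuinely different route from the paper's. The paper proceeds entrywise: it splits the Frobenius norm into diagonal and off-diagonal sums and proves the uniform bounds $p\,\E[Y_i^2/(\Y\trans\bSig\Y)]-1\to 0$ and $p^{3/2}|\E[Y_iY_j/(\Y\trans\bSig\Y)]|\to 0$. The off-diagonal piece in particular forces a further decomposition of the denominator $\Z\trans\bSig\Z$ into $\sum_k\sigma_{kk}Z_k^2$ plus cross terms, and requires an auxiliary lemma (Lemma~\ref{lem:non_identity_diagonal_self_normalization}) for moments of $Z_iZ_j/(\sum_k\sigma_{kk}Z_k^2)$ with non-constant weights, together with Proposition~\ref{prop:cross_term_self_normalization} to control the quartic cross term via H\"older. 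Your trace-collapse avoids all of this: reducing to the scalar identities $\tr\M=p\,\E(1/D)$ and $\tr\M^2=p^2\,\E\E'[(\Y\trans\Y')^2/(DD')]$ and controlling $D-1$ globally via $\E|\Y\trans\bSig\Y-1|^2=o(1)$, you need only Propositions~\ref{lem:odd_number_2} and~\ref{prop:moments_of_self-normalized_quadratic_forms}; the independent-copy device together with the deterministic operator-norm bound $\|\E[\Y\Y\trans]\|=p^{-1}+(p-1)|\E Y_1Y_2|=O(p^{-1})$ is exactly what lets you pull the quadratic form out and reduce the correction to $p^2\cdot O(p^{-1})\cdot\E|\xi|=o(p)$. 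What the paper's longer route buys in exchange is strictly more information: it yields the rate on each individual entry of $\M-\bI_p$, which is the natural starting point for the sharper CLT-level expansion in Theorem~\ref{thm:clt1}.
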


With Theorem \ref{thm:lsd1}, the population covariance matrix $\T$ has the same LSD as the one of $\bSig$. By the main theorem of \citet{bai2008large}, we can derive the LSD of the spatial-sign covariance matrix $\B$.
\begin{prop}\label{thm:LSD_sigma_2}
    Under Assumptions (a)-(c) with $\alpha\geq 2$, assuming the ESD of $\bSig$ converges weakly to $H$, the Mar{\u{c}}enko-Pastur equation \eqref{mp-eq} holds for the LSD of $\B$.
\end{prop}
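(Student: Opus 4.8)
The plan is to recognize $\B=\frac1n\sum_{i=1}^n\mathbf{w}_i\mathbf{w}_i\trans$ as a sample covariance matrix assembled from the independent, identically distributed columns $\mathbf{w}_i=\sqrt{p}\,\X_i/\|\X_i\|_2$, and then to invoke Theorem~1.1 of \citet{bai2008large}. That theorem produces the Mar\u{c}enko--Pastur equation \eqref{mp-eq} for the LSD of exactly such a matrix once two hypotheses hold: (i) the ESD of the common second-moment matrix $\T=\E\mathbf{w}_1\mathbf{w}_1\trans$ converges weakly to a limit, and (ii) the quadratic forms concentrate, in the normalized form $\sup_{\|\A\|=1}\var\big(\X_1\trans\A\X_1/(\X_1\trans\X_1)\big)=o(1)$ already isolated in the discussion preceding the statement. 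The proof then reduces to checking (i), (ii) and identifying the limiting ESD of $\T$ with $H$; the genuinely hard analytic inputs, Theorem~\ref{thm:lsd1} and Proposition~\ref{prop:general_self_normalization}, are in hand, so what remains is assembly.

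For (i), I would use the factorization $\T=\bSig^{1/2}\M\bSig^{1/2}$, where $\M=p\,\E\big(\Y\Y\trans/(\Y\trans\bSig\Y)\big)$ and $\Y=\Z_1/\|\Z_1\|_2$, which follows from $\X_1=\bSig^{1/2}\Z_1$. Theorem~\ref{thm:lsd1} gives $\frac1p\|\M-\bI_p\|_F^2\to0$, and Assumption~(b) gives $\|\bSig\|\le c$, so
\begin{align*}
\frac1p\|\T-\bSig\|_F^2=\frac1p\big\|\bSig^{1/2}(\M-\bI_p)\bSig^{1/2}\big\|_F^2\le\|\bSig\|^2\cdot\frac1p\|\M-\bI_p\|_F^2\to 0.
\end{align*}
By the Hoffman--Wielandt inequality, $\frac1p\sum_i(\lambda_i(\T)-\lambda_i(\bSig))^2\le\frac1p\|\T-\bSig\|_F^2\to0$, so $F^{\T}$ and $F^{\bSig}$ have the same weak limit $H$. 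I would also record $\limsup_p\|\T\|<\infty$ through a short direct estimate using $\Y\trans\bSig\Y\ge\lambda_{\min}(\bSig)\ge1/c$ and $\|\E\Y\Y\trans\|=O(1/p)$ (the latter from $\E Y_1^2=1/p$ and $\E Y_1Y_2=o(p^{-2})$, i.e.\ Propositions~\ref{lem:even_number} and \ref{lem:odd_number_2}), which supplies the spectral-norm control the theorem needs.

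For (ii), writing $\X_1=\bSig^{1/2}\Z_1$ and $\widetilde\A=\bSig^{1/2}\A\bSig^{1/2}$ turns the ratio into $\Y\trans\widetilde\A\Y/(\Y\trans\bSig\Y)$, and the $k=2$, $\alpha\ge2$ clause of Proposition~\ref{prop:general_self_normalization} bounds its variance by $o(1)\,\|\widetilde\A\|^2\|\bSig\|^2/\lambda_{\min}^2(\bSig)$. Since $\|\widetilde\A\|\le\|\bSig\|\,\|\A\|\le c$, $\|\bSig\|\le c$ and $\lambda_{\min}(\bSig)\ge1/c$ by Assumption~(b), this is $o(1)$ uniformly over $\|\A\|=1$. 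Because $\mathbf{w}_1\trans\A\mathbf{w}_1-\tr(\A\T)=p\big(\X_1\trans\A\X_1/(\X_1\trans\X_1)-\tr(\A\T)/p\big)$ and $\E\big(\X_1\trans\A\X_1/(\X_1\trans\X_1)\big)=\tr(\A\T)/p$ by the definition of $\T$, this $o(1)$ variance is exactly $p^{-2}\E|\mathbf{w}_1\trans\A\mathbf{w}_1-\tr(\A\T)|^2$, which is the concentration required by the theorem.

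The step I expect to be the only real obstacle is a clean reduction to the centered-column framework of \citet{bai2008large}, since Assumption~(c) supplies only $\E Z=0$ and not symmetry, so $\bmu=\E\mathbf{w}_1$ need not vanish. Here I would note that $\bmu\bmu\trans\preceq\T$ forces $\|\bmu\|=O(1)$, that replacing each $\mathbf{w}_i$ by $\mathbf{w}_i-\bmu$ alters $\B$ only by a matrix of rank at most two (hence leaves the LSD unchanged by the rank inequality for distribution functions), and that the centered columns have covariance $\T-\bmu\bmu\trans$, still with limiting ESD $H$ after a rank-one perturbation, while the additional linear term in the quadratic form contributes variance $O(1)=o(p^2)$ so that (ii) persists. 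With (i), (ii), the norm bound and this centering reduction in place, Theorem~1.1 of \citet{bai2008large}, together with $p/n\to y$ from Assumption~(a), identifies the Stieltjes transform of the LSD of $\B$ as the unique solution in $\mathbb{C}^+$ of \eqref{mp-eq} with $H$ the limiting ESD of $\bSig$, which is the claim.
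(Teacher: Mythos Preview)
Your proposal is correct and follows essentially the same route the paper sketches in the paragraph preceding the proposition: apply Theorem~1.1 of \citet{bai2008large} after verifying that (i) the LSD of $\T$ coincides with $H$ via Theorem~\ref{thm:lsd1}, and (ii) the quadratic-form concentration holds via the $\alpha\ge2$ clause of Proposition~\ref{prop:general_self_normalization}. Your Hoffman--Wielandt step and the explicit operator-norm bound on $\T$ spell out what the paper leaves implicit. The centering reduction you flag as the ``only real obstacle'' is in fact unnecessary: Theorem~1.1 of \citet{bai2008large} takes as input the second-moment matrix $\T=\E\mathbf{w}_1\mathbf{w}_1\trans$ and the concentration of $\mathbf{w}_1\trans\A\mathbf{w}_1$ about $\tr(\A\T)$, with no zero-mean hypothesis on the columns, so your argument already goes through without that detour.
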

Proposition \ref{thm:LSD_sigma_2} extends Theorem 2.1 of \citet{li2022eigenvalues} where $\E |Z|^{4+\epsilon}<\infty$ is necessary. Our conclusion is even applicable to the case $\alpha=2$ such as Student's t-distribution with degrees of freedom 2 for which $\E  Z^2=\infty$. 

Interestingly, $\alpha \geq 2$ is the sufficient and necessary condition for classical CLT, i.e., there are constants $a_n,b_n$ such that 
\begin{align*}
    \frac{\sum_{i=1}^p Z_i -a_n}{b_n} \to N(0,1),
\end{align*}
in distribution as $p \to \infty$. We usually call $Z$ with $\alpha \geq 2$ is in the domain of attraction of the normal distribution. 

For $\alpha<2$, \citet{heiny2022limiting} found a curious LSD for the sample correlation matrix $\R$ when $\bSig=\bI$. Due to the connection between $\R$ and $\B$, we can conclude that the conclusion of Proposition \ref{thm:LSD_sigma_2} fails for $\alpha<2$ even $\bSig=\bI$. In other words, $\alpha \geq 2$ is the sufficient and necessary condition for $F^{\B}$ converging to MP law.   

\subsection{Central limit theorem}\label{sec4}
The LSD guarantees the limits of the linear spectral statistics, e.g.,
\begin{align*}
\int f(x) dF^{\B}(x) \to \int f(x) d F^{y,H},
\end{align*}
where $F^{y,H}$ is the distribution function defined by the Mar{\u{c}}enko-Pastur equation \eqref{mp-eq}.  Further, we study the asymptotic distribution of LSSs. Following the proof strategy of \citet{bai2004clt}, firstly we need to calculate $\T=\E(\B)$, and secondly we need to compute the covariance of quadratic forms
\begin{align}
    \E\left(\frac{\X_1\trans\A\X_1}{\X_1\trans\X_1}-\frac{1}{p}\tr\A\right)\left(\frac{\X_1\trans\B\X_1}{\X_1\trans\X_1}-\frac{1}{p}\tr\B\right).\label{form:CLT_cov_quadratic_form}
\end{align}
Both two steps are quite similar to that in proving LSD, but we need to find more precise results here under a stronger condition. For \eqref{form:CLT_cov_quadratic_form}, we derive the results in Proposition \ref{prop:general_self_normalization}. Next, we consider $\T$. By Proposition \ref{prop:general_self_normalization},
\begin{align*}
    &p\E\frac{Y_i^2}{\Y\trans\bSig\Y}\approx 1-\frac{\tau-1}{p}\sigma_{ii}+\frac{2\tr(\bSig^2)+(\tau-3)\tr(\bSig\circ\bSig)}{p^2},\\
    &p\E\frac{Y_iY_j}{\Y\trans\bSig\Y}\approx -\frac{2}{p}\sigma_{ij},
\end{align*}
which leads to 
\begin{align*}
    \T\approx\widetilde{\bSig}=\bSig-\frac{2}{p}\bSig^2-\frac{\tau-3}{p}\bSig^{\frac{1}{2}}\diag(\bSig)\bSig^{\frac{1}{2}}+\frac{2\tr(\bSig^2)+(\tau-3)\tr(\bSig\circ\bSig)}{p^2}\bSig.
\end{align*}
Comparing with $\bSig$, $\widetilde{\bSig}$ has a better approximation error for $\T=\E(\B)$.  
\begin{theorem}\label{thm:clt1}
    Under Assumptions (b) and (c), if $\alpha>4$ and $p\to\infty$,
    \begin{align*}
        p\left\|\T-\widetilde{\bSig}\right\|_F^2\to 0.
    \end{align*}
\end{theorem}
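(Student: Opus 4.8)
The plan is to convert the stated Frobenius bound into a sharpened, \emph{aggregate} version of the self-normalized quadratic-form expansion underlying Proposition~\ref{prop:general_self_normalization}. Writing $\Y=\Z/\|\Z\|_2$ for a single observation, so that $\X=\bSig^{1/2}\Z$ and $\X\X\trans/\|\X\|_2^2=\bSig^{1/2}\Y\Y\trans\bSig^{1/2}/(\Y\trans\bSig\Y)$, I set
\[
\M=p\,\E\frac{\Y\Y\trans}{\Y\trans\bSig\Y},\qquad
\M_0=\bI-\frac{2}{p}\bSig-\frac{\tau-3}{p}\diag(\bSig)+\frac{2\tr(\bSig^2)+(\tau-3)\tr(\bSig\circ\bSig)}{p^2}\bI,
\]
so that $\T=\bSig^{1/2}\M\bSig^{1/2}$ and, by direct check, $\widetilde{\bSig}=\bSig^{1/2}\M_0\bSig^{1/2}$. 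Since $\|\bSig^{1/2}\C\bSig^{1/2}\|_F\le\|\bSig\|\,\|\C\|_F$ and $\|\bSig\|\le c$ by Assumption~(b), it suffices to prove $\|\M-\M_0\|_F^2=o(p^{-1})$.

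First I would set $W=\Y\trans\bSig\Y-1=\Y\trans(\bSig-\bI)\Y$, which is bounded, $|W|\le\|\bSig-\bI\|\le c+1$, and concentrates at scale $p^{-1/2}$; together with $1/c\le\Y\trans\bSig\Y\le c$ this lets me expand $1/(\Y\trans\bSig\Y)=\sum_{m=0}^{M-1}(-W)^m+(-W)^M/(\Y\trans\bSig\Y)$ for a fixed $M$, giving
\[
M_{ij}=p\sum_{m=0}^{M-1}(-1)^m\,\E[Y_iY_jW^m]+p\,\E\!\left[\frac{Y_iY_j(-W)^M}{\Y\trans\bSig\Y}\right].
\]
Expanding $W^m=\big(\sum_{ab}(\sigma_{ab}-\delta_{ab})Y_aY_b\big)^m$ reduces each $\E[Y_iY_jW^m]$ to a finite combination of self-normalized moments weighted by products of entries of $\bSig-\bI$, whose exact orders are supplied by Propositions~\ref{prop:even_number4} and~\ref{prop:alpha>4_general_number} (here $\alpha>4$ is essential). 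Collecting the surviving contributions, and using the normalization $\tr(\bSig)=p$ to cancel several would-be leading terms, reproduces $\M_0$; the identities $\tr(\bSig\circ\bSig)=\sum_k\sigma_{kk}^2$ and $\tr(\bSig^2)=\sum_{kl}\sigma_{kl}^2$, both $O(p)$, show that the last term of $\M_0$ is genuinely of order $p^{-1}$.

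I would then bound $\M-\M_0$ on its diagonal and off-diagonal blocks separately. On the diagonal there are only $p$ entries, so $\sum_i|M_{ii}-(M_0)_{ii}|^2=o(p^{-1})$ follows once $|M_{ii}-(M_0)_{ii}|=o(p^{-1})$ uniformly; this needs every $O(p^{-1})$ contribution to $M_{ii}$ to be matched, with the leftover coming from moments carrying an index of power $\ge6$ or from the $m\ge3$ terms, which the $\alpha>4$ rates push to $o(p^{-1})$. The off-diagonal block, with $\sim p^2$ entries, is the crux: its leading piece $-2\sigma_{ij}/p$ already contributes $\tfrac{4}{p^2}\sum_{i\ne j}\sigma_{ij}^2\le\tfrac{4}{p^2}\tr(\bSig^2)=O(p^{-1})$ to $\|\M_0\|_F^2$, so the residual must be strictly smaller. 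I would write each off-diagonal residual entry as a finite sum of terms (self-normalized-moment rate)$\,\times\,$(product of entries of $\bSig$) and bound the $\ell^2$-aggregate $\sum_{i\ne j}(\cdot)^2$ using the faster decay of the mixed and odd moments under $\alpha>4$ (Proposition~\ref{prop:alpha>4_general_number}) together with trace bounds such as $\sum_{ij}\sigma_{ij}^2=\tr(\bSig^2)$ and $\sum_{ijk}\sigma_{ik}^2\sigma_{kj}^2\le\|\bSig\|^2\tr(\bSig^2)$, each $O(p)$.

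The main obstacle is exactly this off-diagonal aggregation. Proposition~\ref{prop:general_self_normalization} cannot simply be invoked entrywise: its $o(p^{-1})\|\A\|$ error already exceeds the true value $O(p^{-2})$ at a single off-diagonal entry, and squared over the $p^2$ entries it sums only to $o(1)$; equivalently, the duality bound $\|\M-\M_0\|_F=\sup_{\|\G\|_F\le1}\langle\G,\M-\M_0\rangle$ loses a full power of $p$ because $\|\G\|\le\|\G\|_F$. The real work is therefore to re-run the expansion with $\ell^2$-aggregate bookkeeping---cataloguing the index-coincidence patterns in $\E[Y_iY_jW^m]$ (the cubic term $W^3$ being the most laborious), exploiting that $Y_iY_j$ is odd in each variable to control the remainder beyond naive Cauchy--Schwarz, and verifying that every residual pattern, weighted by the appropriate powers of $\bSig$ and summed, stays below $o(p^{-1})$. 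The delicate $\tr(\bSig)=p$ cancellations among the diagonal $O(p^{-1})$ terms are the secondary place where care is needed.
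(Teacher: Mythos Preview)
Your reduction to $\|\M-\M_0\|_F^2=o(p^{-1})$ and your treatment of the diagonal entries via the geometric expansion in $W=\Y\trans\bSig\Y-1$ match the paper's approach. The gap is in the off-diagonal block. With the $W$-expansion, the remainder $p\,\E[Y_iY_j(-W)^M/(\Y\trans\bSig\Y)]$ cannot be made small enough: Cauchy--Schwarz gives only
\[
p\cdot\sqrt{\E Y_i^2Y_j^2}\,\sqrt{\E W^{2M}}=p\cdot O(p^{-1})\cdot o(p^{-1/2})=o(p^{-1/2})
\]
per entry, because $\E|W|^k=o(p^{-1})$ is all that Proposition~\ref{prop:alpha>4_moments_of_self-normalized_quadratic_forms} delivers for $k\ge3$; the obstruction is the diagonal piece $\sum_k(\sigma_{kk}-1)Y_k^2$ of $W$, whose $2M$th moment contains the single-index term $\sum_k(\sigma_{kk}-1)^{2M}\E Y_k^{4M}$, of size only $o(p^{-1-\epsilon})$ under $\alpha>4$. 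Squared and summed over $p^2$ entries this remainder is $o(p)$, not $o(p^{-1})$. Your proposed fix, ``exploiting that $Y_iY_j$ is odd \ldots\ beyond naive Cauchy--Schwarz'', is not made concrete, and the oddness argument is spoiled by the factor $1/(\Y\trans\bSig\Y)$ still present in the remainder.

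The paper sidesteps this by using a \emph{different} expansion for the off-diagonal entries: write $\Z\trans\bSig\Z=\sum_k\sigma_{kk}Z_k^2+\sum_{k\ne l}\sigma_{kl}Z_kZ_l$ and expand $1/(\Z\trans\bSig\Z)$ in powers of the \emph{off-diagonal} part about the \emph{diagonal} base $\sum_k\sigma_{kk}Z_k^2$. The off-diagonal quadratic form concentrates far more sharply---Proposition~\ref{prop:cross_term_self_normalization} gives $\E\bigl|\sum_{k\ne l}\sigma_{kl}Y_kY_l\bigr|^8=o(p^{-3})$---so Cauchy--Schwarz on the fourth-order remainder now yields $O(p^{-1})\cdot o(p^{-3/2})=o(p^{-5/2})$, hence a \emph{uniform} bound $|M_{ij}-(M_0)_{ij}|=o(p^{-3/2})$ after multiplying by $p$; summing over $p^2$ entries gives $o(p^{-1})$ with no $\ell^2$-aggregation machinery needed. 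The intermediate terms in this expansion carry denominators $(\sum_k\sigma_{kk}Z_k^2)^m$, handled by Lemma~\ref{lem:alpha_4_diagonal}, an extension of Proposition~\ref{prop:alpha>4_general_number} to weighted diagonal self-normalizers. This change of expansion, together with that lemma, is the missing idea.
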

In \citet{li2022eigenvalues}, they obtained the bound $\|\T-\widetilde{\bSig}\|=o(p^{-1})$  under the condition $\E|Z|^5<\infty$, while we get the conclusion under $\alpha>4$.

With Theorem \ref{thm:clt1}, the LSS can be centered as 
\begin{align*}
    G_n(f)=p\left(\int f(x)dF^{\B}(x)-\int f(x)F^{y_n,\widetilde{H}_p}(x)\right),
\end{align*}
where $\widetilde{H}_p$ is the ESD of $\widetilde{\bSig}$ and $F^{y_n,\widetilde{H}_p}$ is defined by the Stieltjes transform $m(z)$ satisfying
\begin{align*}
    m(z)=\int\frac{1}{t(1-y-yzm(z))-z} d\widetilde{H}_p(t),\quad z\in\mC^+.
\end{align*}
And we are ready to derive the CLT for LSS of $\B$.
\begin{prop}\label{thm:clt}
    Under assumptions (a)-(c) with $\alpha>4$, $\E Z^2=1$, $\E Z^4=\tau$, and let $f_1,\cdots,f_k$ be functions analytic on an open interval including 
    \begin{align} \label{support}
        \left[\liminf_n\lambda_{\min}^{\B}\one(0<y<1)(1-\sqrt{y})^2,\limsup_{n}\lambda_{\max}^{\B}(1+\sqrt{y})^2\right],
    \end{align}
    then we have that $\G_n:=(G_n(f_1),\cdots,G_n(f_k))$ converges weakly to a Gaussian random vector $\boldmath{\xi}=\left(\xi_1,\cdots,\xi_k\right)$ with the mean function
    \begin{align*}
        \E (\xi_j)=-\frac{1}{2 \pi i} \oint_{\mathcal{C}_1} f_j(z) \left( \mu_1(z)+(\tau-3)\mu_2(z)\right) dz
    \end{align*}
    and the covariance function
    \begin{align*}
        \cov(\xi_j,\xi_k)=-\frac{1}{4\pi^2} \oint_{\mathcal{C}_1}\oint_{\mathcal{C}_2}f_j(z_1)f_k(z_2)\left(\sigma_1(z_1,z_2)+(\tau-3) \sigma_2(z_1,z_2) \right)dz_1dz_2.
    \end{align*}
where the contours $\mathcal{C}_1$ and $\mathcal{C}_2$ are non-overlapping, closed, counter-clockwise orientated in the complex plane and enclosing the interval \eqref{support}.  
\end{prop}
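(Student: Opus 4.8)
The plan is to follow the martingale approach of \citet{bai2004clt} and \citet{bai2008large}, substituting the self-normalized quadratic-form estimates of Section~\ref{sec2} for the independent-entry concentration bounds used there. The starting observation is that $\B=n^{-1}\sum_{i=1}^n \mathbf{q}_i\mathbf{q}_i\trans$ with columns $\mathbf{q}_i=\sqrt{p}\,\X_i/\|\X_i\|_2=\sqrt{p}\,\bSig^{1/2}\Y_i/(\Y_i\trans\bSig\Y_i)^{1/2}$ that are \emph{independent} across $i$, even though the coordinates within each $\mathbf{q}_i$ are coupled by self-normalization. Writing $m_n(z)=p^{-1}\tr(\B-z\bI)^{-1}$ for the Stieltjes transform of $F^{\B}$ and $m^0_n(z)$ for the solution of the Mar\u{c}enko--Pastur equation \eqref{mp-eq} attached to $(y_n,\widetilde{H}_p)$, Cauchy's formula gives, for each $f_j$ analytic on a neighborhood of \eqref{support},
\begin{align*}
  G_n(f_j)=-\frac{1}{2\pi i}\oint_{\mathcal{C}} f_j(z)\,M_n(z)\,dz,\qquad M_n(z):=p\bigl(m_n(z)-m^0_n(z)\bigr),
\end{align*}
on a contour $\mathcal{C}$ enclosing \eqref{support}. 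The problem thereby reduces to proving that the process $M_n(\cdot)$ converges weakly on $\mathcal{C}$ to a Gaussian field, together with tightness; the continuous mapping theorem then transfers the limit to $\G_n$.

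I would split $M_n(z)=M_n^{(1)}(z)+M_n^{(2)}(z)$ with the fluctuation part $M_n^{(1)}(z)=\tr(\B-z\bI)^{-1}-\E\tr(\B-z\bI)^{-1}$ and the bias part $M_n^{(2)}(z)=\E\tr(\B-z\bI)^{-1}-p\,m^0_n(z)$. For $M_n^{(1)}$, let $\E_i$ denote conditional expectation given $\mathbf{q}_1,\dots,\mathbf{q}_i$ and write $M_n^{(1)}(z)=\sum_{i=1}^n(\E_i-\E_{i-1})\tr(\B-z\bI)^{-1}$, a sum of martingale differences. Removing $\mathbf{q}_i$ from the resolvent through the Sherman--Morrison identity, each difference reduces, modulo a negligible remainder, to a centered self-normalized quadratic form
\begin{align*}
  \frac{\Y_i\trans\bSig^{1/2}\M_i(z)\bSig^{1/2}\Y_i}{\Y_i\trans\bSig\Y_i}-\frac{1}{p}\tr\bigl(\bSig^{1/2}\M_i(z)\bSig^{1/2}\bigr),
\end{align*}
where $\M_i(z)$ is a resolvent-type matrix independent of $\Y_i$ whose norm is controlled on $\mathcal{C}$ by the distance to the support. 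The limiting covariance of $(M_n^{(1)}(z_1),M_n^{(1)}(z_2))$ is then read off from the second-moment formula of Proposition~\ref{prop:general_self_normalization}, applied with $\A=\bSig^{1/2}\M_i(z_1)\bSig^{1/2}$ and $\B=\bSig^{1/2}\M_i(z_2)\bSig^{1/2}$: its $\tr(\A\B)$-type piece yields the kernel $\sigma_1(z_1,z_2)$, while the Hadamard $\tr(\A\circ\B)$-type piece, which carries the factor $\tau_3=\tau-3$, yields $(\tau-3)\sigma_2(z_1,z_2)$. Finite-dimensional Gaussian convergence then follows from the martingale CLT, whose Lyapunov condition is guaranteed by the $k\ge 3$ bound $o(p^{-1})\|\A\|^k$ of the same proposition, and the identical estimates summed along $\mathcal{C}$ give tightness.

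For the bias $M_n^{(2)}(z)$, I would expand $\E\tr(\B-z\bI)^{-1}$ to order $p^{-1}$ by iterating the resolvent identity and inserting the first-moment expansion of Proposition~\ref{prop:general_self_normalization}. The decisive point is that the leading $O(p^{-1})$ deterministic drift of $\E[\mathbf{q}_i\trans\M\mathbf{q}_i]$ away from $\tr(\widetilde{\bSig}\M)$ has already been absorbed by centering with $F^{y_n,\widetilde{H}_p}$ rather than with the Mar\u{c}enko--Pastur law of $\bSig$; this is exactly what Theorem~\ref{thm:clt1} supplies, since $p\|\T-\widetilde{\bSig}\|_F^2\to 0$ guarantees that the discrepancy between the $\widetilde{\bSig}$-centering and the exact population matrix $\T=\E(\B)$ is negligible on the scale $p$. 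The surviving order-$p^{-1}$ terms split once more into a $\tr(\bSig^2)$-type contribution and a $\tr(\bSig\circ\bSig)$-type contribution bearing the coefficient $\tau-3$, and after the contour integration these assemble into the mean function $\mu_1(z)+(\tau-3)\mu_2(z)$.

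The main obstacle is the within-column dependence created by self-normalization: unlike the independent-entry model of \citet{bai2004clt}, the constraint $\|\Y_i\|_2=1$ couples all coordinates, so neither the martingale-difference covariances nor the bias expansion may appeal to independence of entries. Everything therefore hinges on the precision of Propositions~\ref{prop:general_self_normalization} and~\ref{prop:cross_term_self_normalization}: one must verify that their $o(p^{-1})$ remainders are \emph{uniform} over $z\in\mathcal{C}$ and over the matrices $\M_i(z)$, so that summing the $n=O(p)$ martingale differences leaves a vanishing accumulated error, and that the off-diagonal cross terms $\sum_{i\neq j}a_{ij}Y_iY_j$ governed by Proposition~\ref{prop:cross_term_self_normalization} stay negligible at each step. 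The most delicate bookkeeping is tracking the Hadamard-product term simultaneously through the covariance and the bias and confirming that the single coefficient $\tau-3$ propagates consistently into $\sigma_2$ and $\mu_2$; matching the resulting kernels to those of \citet{li2022eigenvalues} under the weaker moment assumption $\alpha>4$, in place of $\E|Z|^5<\infty$, is where the estimates of Section~\ref{sec2} must be used to full strength.
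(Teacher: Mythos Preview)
Your proposal is correct and matches the paper's approach: the paper does not give a standalone proof of this proposition but instead establishes exactly the two ingredients you identify—the quadratic-form moment expansions of Proposition~\ref{prop:general_self_normalization} (and Proposition~\ref{prop:cross_term_self_normalization}) under $\alpha>4$, and the sharpened centering Theorem~\ref{thm:clt1}—and then invokes the Bai--Silverstein martingale scheme as carried out in \citet{li2022eigenvalues}. Your decomposition $M_n=M_n^{(1)}+M_n^{(2)}$, the use of the martingale CLT with the $k\ge 3$ bound for the Lyapunov condition, and the identification of the $\tau-3$ contributions through the Hadamard-product terms are precisely the route the paper intends.
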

The mean and covariance functions $\mu_1(z),\mu_2(z),\sigma_1(z_1,z_2),\sigma_2(z_1,z_2)$ can be found in Theorem 2 of \citet{li2022eigenvalues}. Specifically
\begin{gather*}
    \frac{1}{p} \tr \left( \bW_1(z)   \circ \bW_2(z)  \right) \to \mu_2(z).\\
\frac{\partial^2}{\partial z_1 \partial z_2}\left( y \um(z_1) \um(z_2) \frac{1}{p} \tr \left\{ \bW_1(z_1)   \circ \bW_1(z_2)   \right\}  \right)\to \sigma_2(z_1,z_2),
\end{gather*} 
where 
\begin{align*}
    \bW_k(z)=\bSig^{1/2} \left(\bSig+\um(z)\bI\right)^{-k}\bSig^{1/2}-\frac{1}{p}\tr\left(\bSig+\um(z)\bI\right)^{-k} \cdot \bSig,~k=1,2.
\end{align*}
For the special case $\tau=3$, the asymptotic mean $\mu_2(z)$ and variance $\sigma_2(z_1,z_2)$ can be eliminated. \citet{yang2021testing} studied this case. For general $\tau$, they produce effects on the final CLT. 

A degenerate case arise when $\bSig=\bI$ where $\mu_2(z)=0$ and $\sigma_2(z_1,z_2)=0$. In this special case, it is possible to obtain CLT for $\alpha \leq 4$.  For instance, \citet{heiny2024log} obtained the CLT for the log-determinant function of the sample correlation matrix $\R$ when $\bSig=\bI$ and $\tau=\infty$. \citet{li2024necessary} further derived the necessary and sufficient condition for the CLT of general linear spectral statistics under the condition $\bSig=\bI$.  For general $\bSig$, we can conclude $\E  Z^4=\tau<\infty$ is a necessary condition, implying that $\alpha>4$ is nearly weakest possible condition for establishing the CLT for general linear spectral statistics.

\section{Numerical results}\label{sec5}
In this section, we conduct numerical simulations to verify our results. 
\subsection{Empirical performance of spectral distribution}\label{sec4.1}
To investigate the LSD of $\B$, we consider the model
\begin{align*}
    \bSig=\diag(\underbrace{1.2, \cdots, 1.2}_{p/2}, \underbrace{0.8, \cdots, 0.8}_{p/2}),  
\end{align*}
where $Z_{ij},1\leq i\leq n, 1\leq j\leq p$ are i.i.d. random variables following a Student's t-distribution with degrees of freedom $\alpha$.
For this diagonal matrix $\bSig$, the Stieltjes transform $m=m(z)$ is the unique solution to the Mar{\u{c}}enko-Pastur equation
\begin{align*}
  2m=\frac{1}{1.2(1-y-yzm)-z}+\frac{1}{0.8(1-y-yzm)-z}.
\end{align*}
Then we simulate the ESDs of $\B$ with different choices of $\alpha$. 
Figure \ref{Fig1} displays the histograms of empirical eigenvalues for $\alpha \in \{0.5,1,2,4\}$ based on 1000 replications and the theoretical LSDs. We can see that when the degrees of freedom $\alpha\geq 2$, the ESDs fit the theoretical LSDs very well, and when $\alpha<2$, we observe discrepancies between the ESDs and the theoretical LSDs.
\begin{figure}[htbp] 
    \centering 
    \begin{tabular}{cc}				
        \psfig{figure=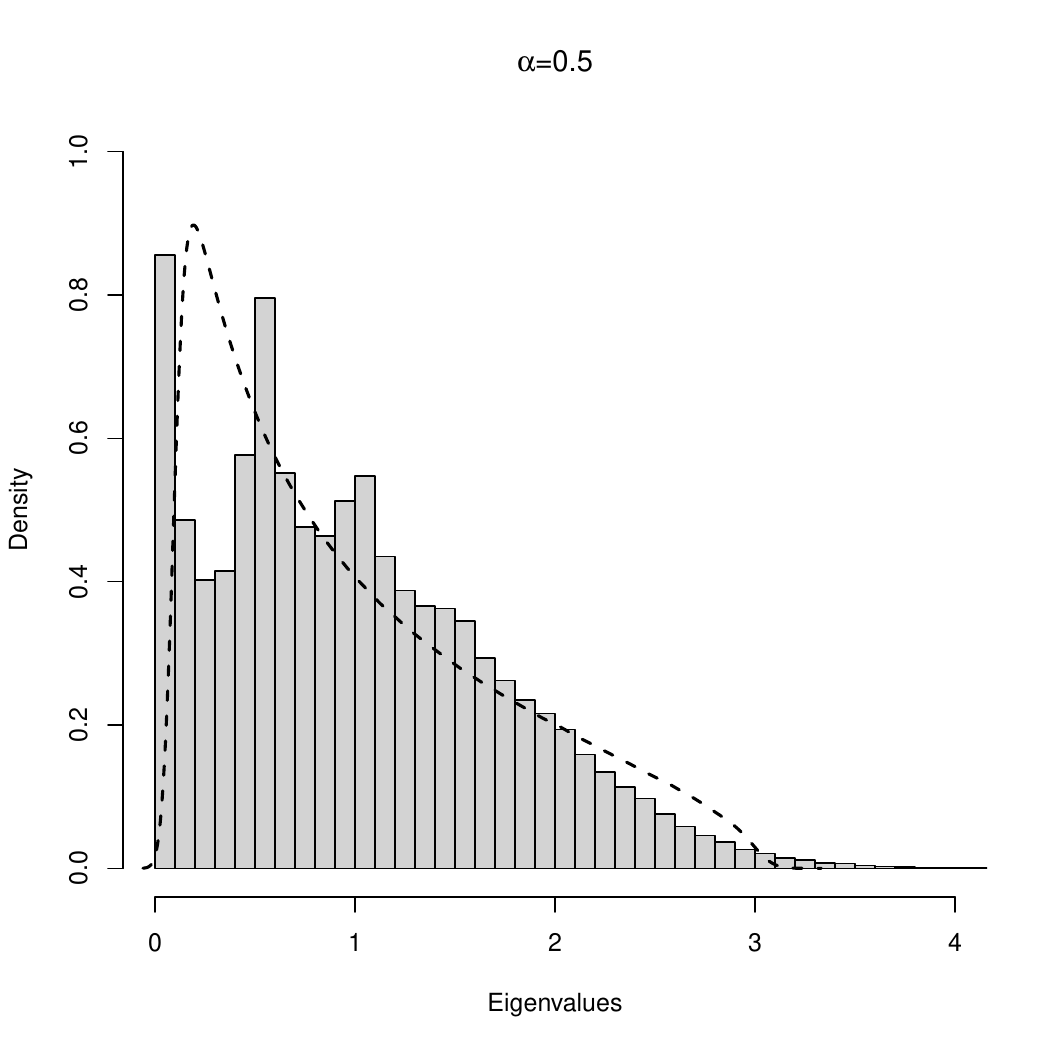,width=2.2 in,angle=0} &
        \psfig{figure=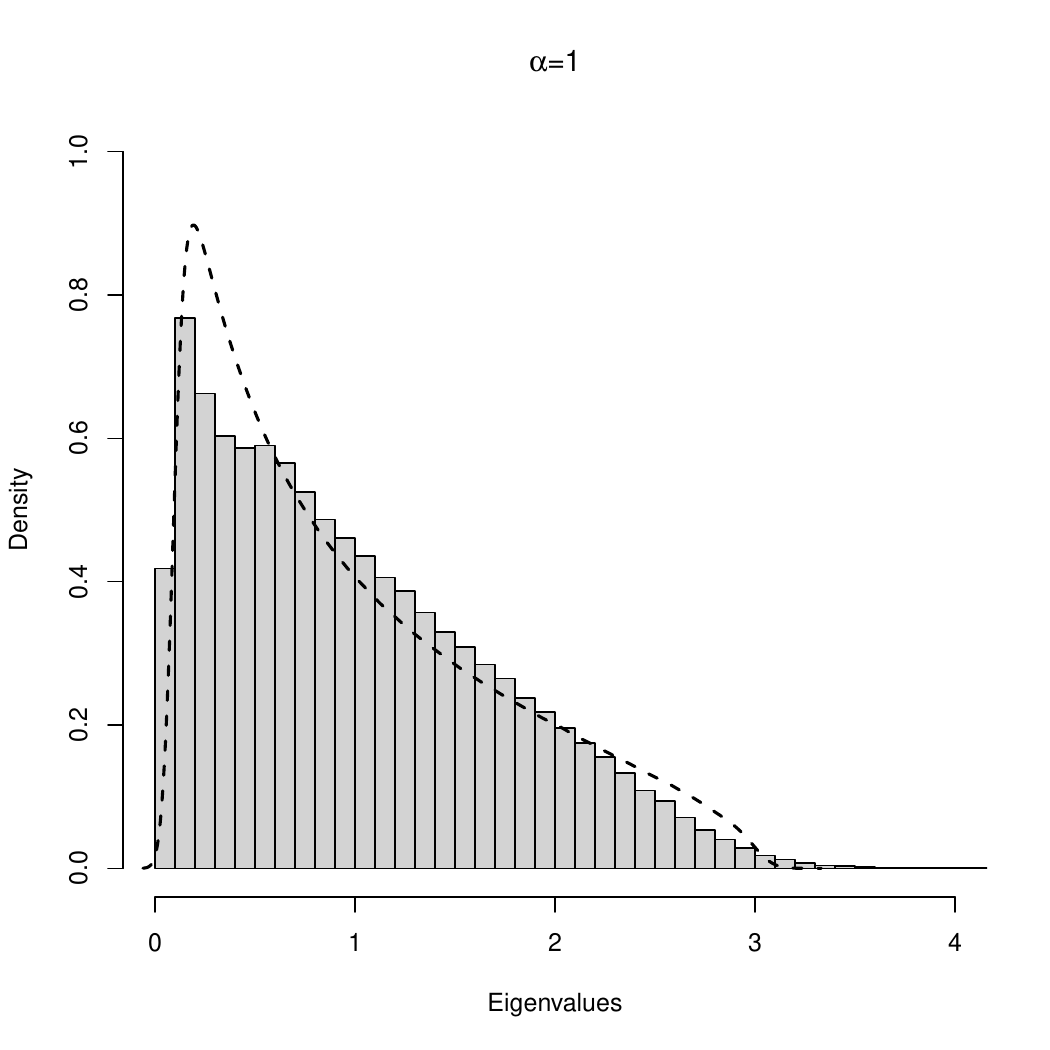,width=2.2 in,angle=0} \\
        \psfig{figure=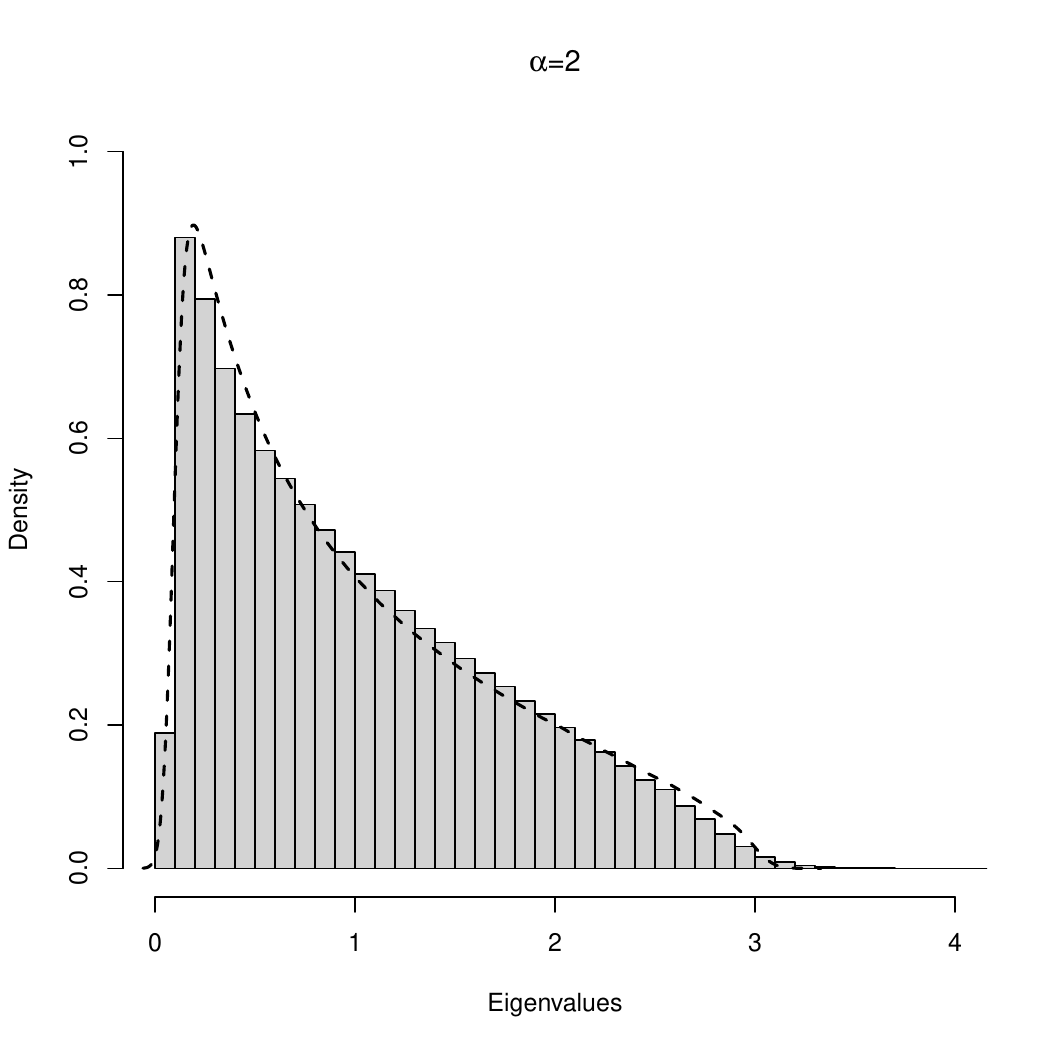,width=2.2 in,angle=0} &
        \psfig{figure=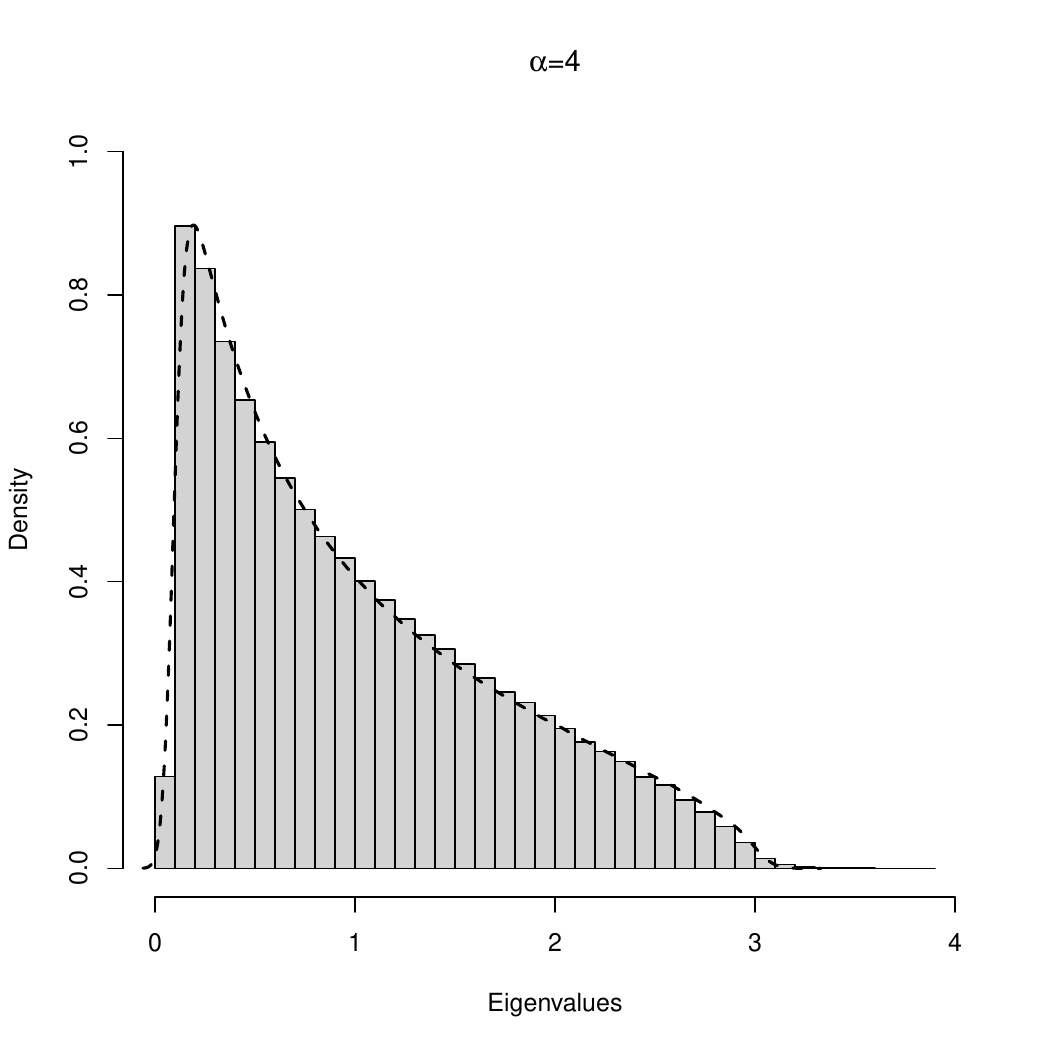,width=2.2 in,angle=0} \\
    \end{tabular}
    \caption{The ESDs of $\B$ with different values of varying index $\alpha\in\{0.5,1,2,4\}$ where $(n,p)=(400,200)$ and the Mar{\u{c}}enko-Pastur law with $y=0.5$.} 
    \label{Fig1} 
  \end{figure}

More precisely, we evaluate the difference between the empirical spectral distribution and the Mar{\u{c}}enko-Pastur law by the infinity norm, i.e., $\|F^{\B}-F^{y,H}\|_{\infty}$. In Figure \ref{Fig2}, we set $p\in\{200,800,2000,5000\}$ with $y=0.5$, and the degrees of freedom $\alpha\in\{0.5,1.0,1.5,2.0,2.5,3.0\}$. 
\begin{figure}[h] 
  \centering 
  \includegraphics[width=1\textwidth]{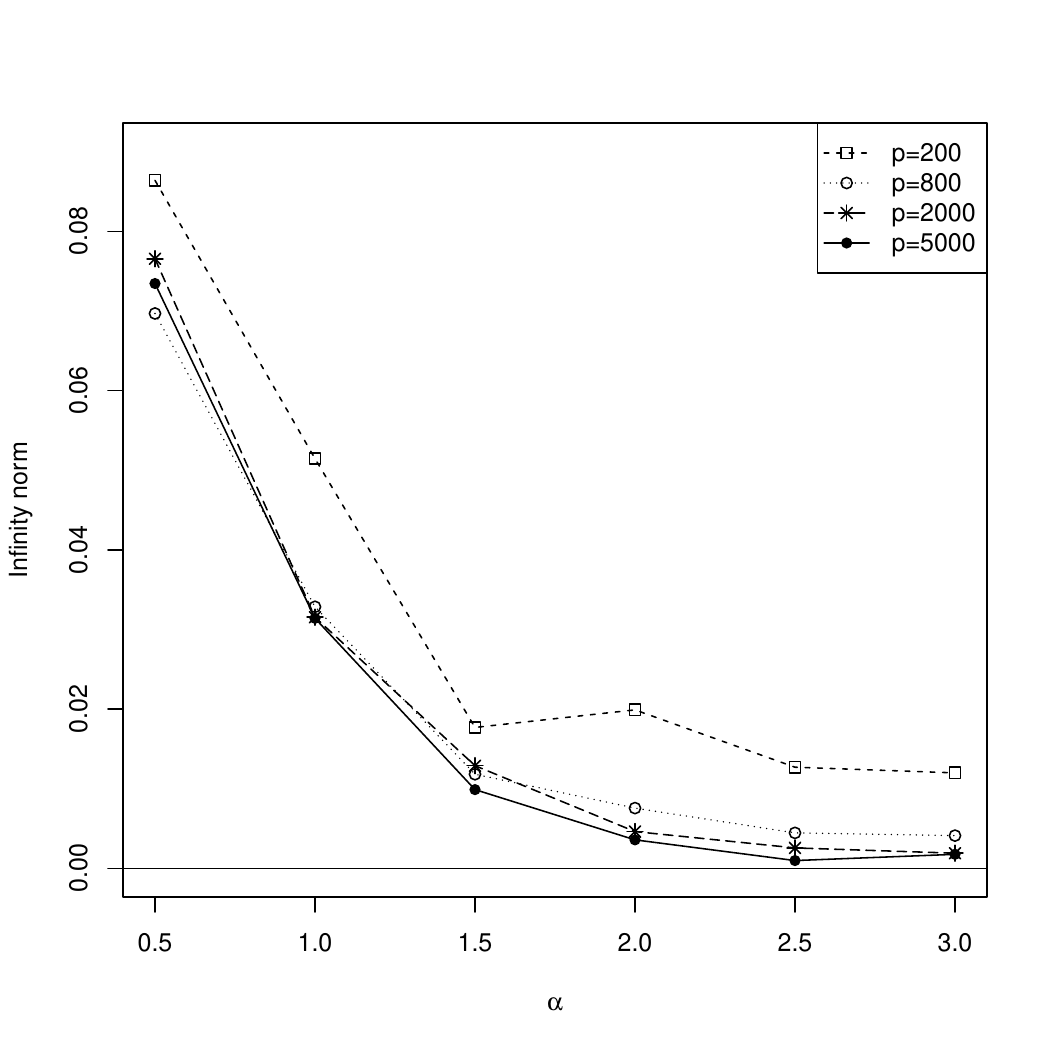} 
  \caption{The infinite norm between ESDs and the generalized Mar{\u{c}}enko-Pastur law  with the growth of $\alpha$ in different settings of dimensions $(n,p)$.} 
  \label{Fig2} 
\end{figure}

On the one hand, if $\alpha<2$, the error terms converge to a positive level when $p$ and $n$ tending to infinity, and it increases dramatically as $\alpha$ decreases to zero. On the other hand, if $\alpha\geq2$, the infinite norm between ESDs and the generalized MP law converge to zero with the growth of dimensions, which means our theoretical conclusions fit the empirical results well.

\subsection{Finite sample performance of CLT}
For the diagonal matrix $\bSig$, \citet{li2022eigenvalues} has calculated the asymptotic mean and variance for $\tr(\B^2)$, that is 
\begin{align*}
    &\mu=-y\alpha_2,\\
    &\sigma^2=4y(\tau-1)\left(\alpha_2^3-2\alpha_2\alpha_3+\alpha_4\right)+4y^2\alpha_2^2,
\end{align*}
where $\alpha_k=\int t^kdH(t)$. And the CLT is given by 
\begin{align*}
    \tr(\B^2)-\tr(\widetilde{\bSig}^2)-\frac{p^2}{n}\to(\mu,\sigma^2).
\end{align*}
where 
\begin{align*}
    \widetilde{\bSig}=\bSig+\frac{\tau-1}{p}\bSig^{1/2} \left( \frac{1}{p} \tr(\bSig^2) \bI-  \bSig \right) \bSig^{1/2}.
\end{align*}
The CLT does not involve $\tau=\E Z^4$ only when $\bSig=\bI$. Otherwise, the bounded fourth moment $\tau<\infty$ is necessary for the CLT.   

In our simulation, we consider $\alpha=4.5$. For 
\begin{align*}
    \bSig=\diag(\underbrace{1.2, \cdots, 1.2}_{p/2}, \underbrace{0.8, \cdots, 0.8}_{p/2}),  
\end{align*}
we have
\begin{align*}
    \tr(\B^2)-1.04p+1.08+\frac{7.22}{p}\to N(-1.04, 6.39).
\end{align*}
Figure \ref{Fig3} present the empirical histograms and the theoretical CLT from which we can see the CLT is still valid for $\E Z^4<\infty$ and $\E |Z|^5=\infty$.

\begin{figure}[h] 
    \centering 
    \begin{tabular}{ccc}				
        \psfig{figure=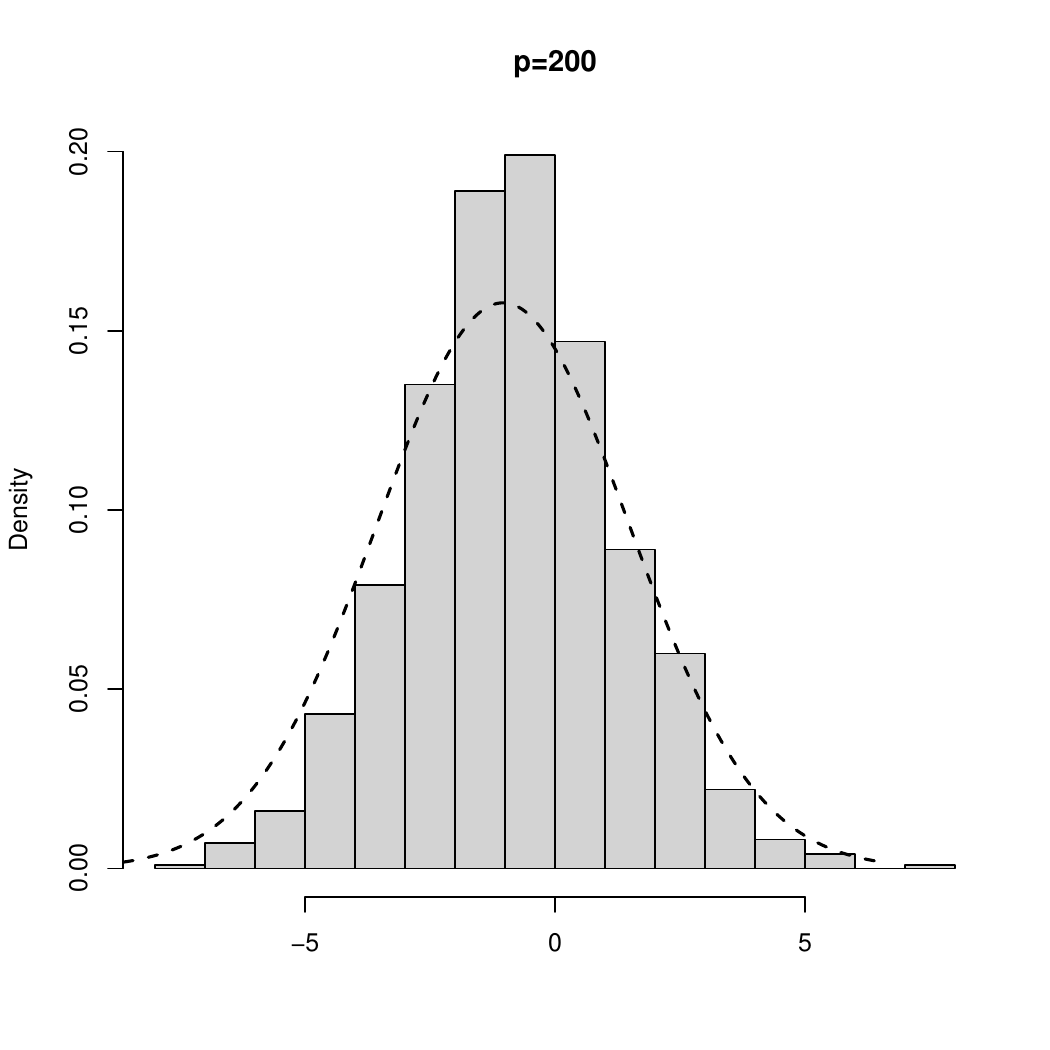,width=1.45 in,angle=0} &
        \psfig{figure=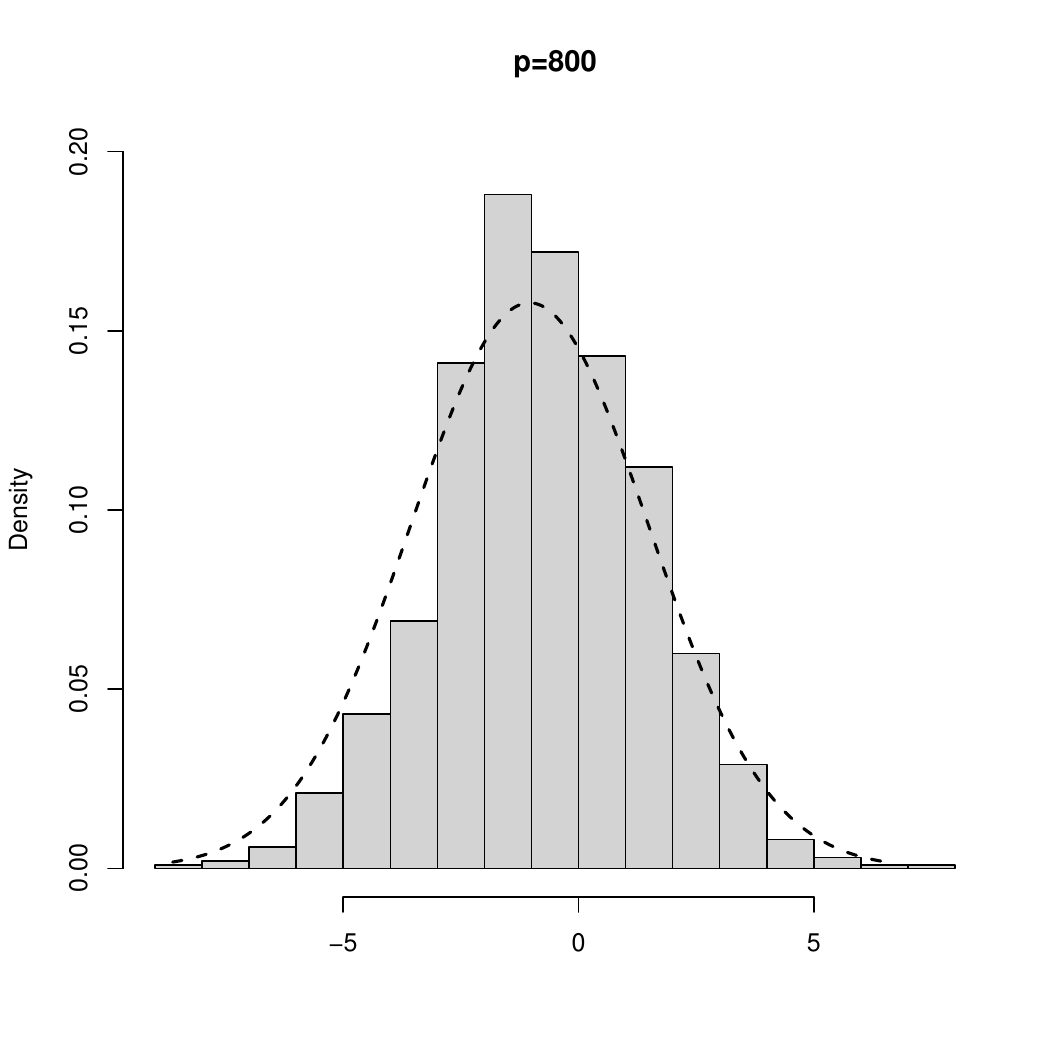,width=1.45 in,angle=0} &
        \psfig{figure=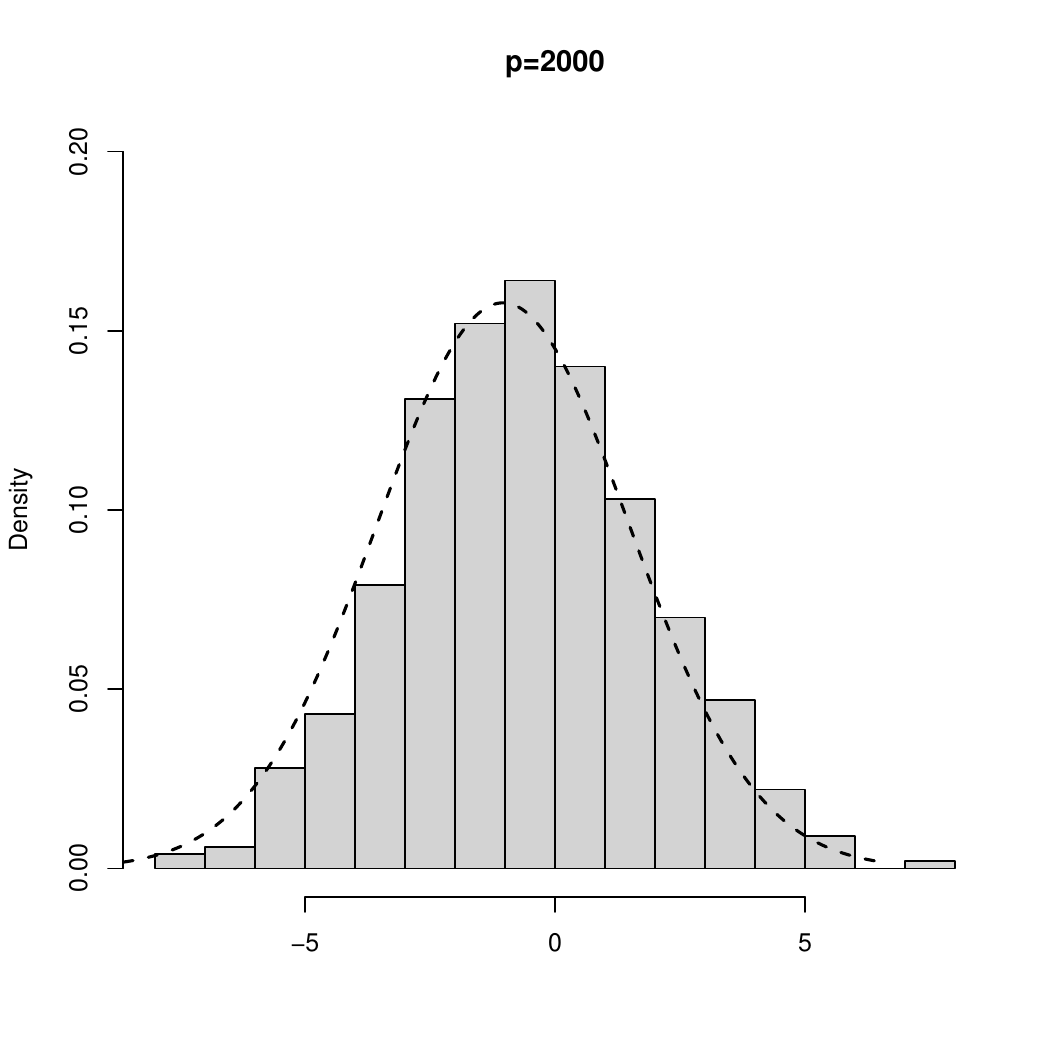,width=1.45 in,angle=0} \\
    \end{tabular}
    \caption{The histograms of statistics $\tr(\B^2)$ and the theoretical CLT for $\alpha=4.5$ and $n=p$.} 
    \label{Fig3} 
\end{figure}


\begin{appendix}
    \section{Auxiliary lemmas}\label{appA}
    \begin{lemma}[Page 7 of \citealt{albrecher2007asymptotic}]\label{lem:exact_limiting}
        Suppose $Z$ is regularly varying with $\alpha>0$. Let $\varphi$ be the Laplace transformation of $Z^2$. Then
        \begin{align*}
            (-1)^k\varphi^{(k)}(s)\sim\begin{cases}
                \frac{\alpha}{2}\Gamma(k-\frac{\alpha}{2})s^{\frac{\alpha}{2}-k}l(\frac{1}{s}),&2k>\alpha,\\
                \frac{\alpha}{2}\widetilde{l}(\frac{1}{s}),&2k=\alpha,\E Z^{2k}=\infty,\\
                \E Z^{2k},&2k\leq\alpha,\E Z^{2k}<\infty,
            \end{cases}\quad(s\to 0^+)
        \end{align*}
        where $\widetilde{l}(x)=\int_{\cdot}^x\frac{l(t)}{t}dt$.
    \end{lemma}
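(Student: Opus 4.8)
The plan is to reduce the statement to a Karamata-type analysis of the nonnegative variable $W=Z^2$. Since $P(W>x)=P(|Z|>\sqrt x)\sim x^{-\alpha/2}l(\sqrt x)$, the variable $W$ is regularly varying of index $\alpha/2$ with slowly varying factor $l^*(x):=l(\sqrt x)$; writing $\overline F_W(x)=P(W>x)$ and $F_W$ for its distribution function, differentiating $\varphi(s)=\E e^{-sW}$ under the expectation gives the exact identity $(-1)^k\varphi^{(k)}(s)=\E[W^k e^{-sW}]=\int_0^\infty x^k e^{-sx}\,dF_W(x)$ for $s>0$. Everything therefore reduces to the behaviour as $s\to0^+$ of the Laplace--Stieltjes transform $\widehat U(s)$ of the nondecreasing function $U(x):=\int_0^x t^k\,dF_W(t)$, whose total mass $U(\infty)$ equals $\E Z^{2k}$. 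The third branch is then immediate: when $2k<\alpha$, or $2k=\alpha$ with $\E Z^{2k}<\infty$, the integrand is dominated by $x^k\in L^1(dF_W)$, and dominated convergence yields $\E[W^k e^{-sW}]\to\E W^k=\E Z^{2k}$.

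For the first branch $2k>\alpha$ I would pin down the growth of $U$ directly from the tail. Integrating by parts, $U(x)=-x^k\overline F_W(x)+k\int_0^x t^{k-1}\overline F_W(t)\,dt$, and since $t^{k-1}\overline F_W(t)\sim t^{k-1-\alpha/2}l^*(t)$ has exponent $k-1-\alpha/2>-1$, Karamata's theorem \citep{bingham1989regular} gives $\int_0^x t^{k-1}\overline F_W(t)\,dt\sim(k-\alpha/2)^{-1}x^k\overline F_W(x)$; the two terms combine to $U(x)\sim\frac{\alpha/2}{k-\alpha/2}x^{k-\alpha/2}l^*(x)$, so $U$ is regularly varying of index $\rho:=k-\alpha/2>0$. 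Because $U$ is nondecreasing, Karamata's Tauberian theorem transfers this to the transform, $\widehat U(s)\sim\Gamma(1+\rho)\frac{\alpha/2}{k-\alpha/2}s^{-\rho}l^*(1/s)$, and the identity $\Gamma(1+\rho)/(k-\alpha/2)=\Gamma(\rho)=\Gamma(k-\frac\alpha2)$ collapses this to $\frac\alpha2\Gamma(k-\frac\alpha2)s^{\alpha/2-k}l^*(1/s)$, the stated first branch, with $l^*(1/s)=l(1/\sqrt s)$ the slowly varying factor inherited from $Z^2$.

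The delicate branch is the critical case $2k=\alpha$ with $\E Z^{2k}=\infty$, where $\rho=0$ and the exponent $k-1-\alpha/2$ above equals $-1$. Here the boundary term $x^k\overline F_W(x)=l^*(x)$ is negligible against $\int_0^x t^{-1}l^*(t)\,dt$, and the critical ($\sigma=-1$) case of Karamata's theorem shows this integral is itself a slowly varying function $\widetilde l^*$ dominating $l^*$; hence $U(x)\sim\frac\alpha2\widetilde l^*(x)$ is slowly varying of index $0$, and the Tauberian theorem with $\rho=0$ gives $\widehat U(s)\sim\frac\alpha2\widetilde l^*(1/s)$, which is the second branch with $\widetilde l^*(x)=\int^x t^{-1}l^*(t)\,dt$ the truncated-logarithmic function recorded as $\widetilde l$. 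I expect the main obstacle to be precisely this critical case: one must confirm that $U$ is genuinely slowly varying, justify the Tauberian passage at $\rho=0$ (where monotonicity of $U$ is essential), and carry the routine but fiddly bookkeeping relating the slowly varying factor $l^*(\cdot)=l(\sqrt{\,\cdot\,})$ of $Z^2$ to the function $l$ of $Z$ as written in the statement.
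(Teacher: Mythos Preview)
The paper does not supply its own proof of this lemma: it is quoted verbatim with a citation to \citet{albrecher2007asymptotic}, so there is nothing in the paper to compare your argument against. Your route---passing to $W=Z^2$, writing $(-1)^k\varphi^{(k)}(s)=\int_0^\infty e^{-sx}\,dU(x)$ with $U(x)=\int_0^x t^k\,dF_W(t)$, determining the regular variation of $U$ via Karamata's theorem (direct half), and then reading off the asymptotics of the Laplace--Stieltjes transform via Karamata's Tauberian theorem---is exactly the classical derivation, and each step is correct as you describe it. The dominated-convergence branch, the $\rho>0$ branch, and the critical $\rho=0$ branch (where $U$ is slowly varying and the Tauberian theorem still applies because $U$ is monotone) are all handled properly.

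The one point worth recording is the bookkeeping you already flagged: your computation produces the slowly varying factor $l^*(1/s)=l(1/\sqrt{s})$, not $l(1/s)$ as written in the lemma. This is a genuine notational slip in the statement as recorded in the paper (the slowly varying part of the tail of $Z^2$ is $l(\sqrt{\,\cdot\,})$, not $l$), but it is harmless for every downstream use in the paper: in the proofs of Propositions~\ref{lem:even_number} and~\ref{prop:even_number4} the slowly varying factors enter only through ratios of the form $l(a_p/t)/l(a_p)$ or through the relation $p a_p^{-1}\widetilde l(a_p)\to1$, and both are insensitive to replacing $l$ by any other slowly varying function. So your proof is correct, and the only residual discrepancy is cosmetic.
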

    \begin{lemma}[Corollary 8.1.7 and Theorem 8.8.1 of \citealt{bingham1989regular}]\label{lem:exact_limiting_1-phi}
        Suppose $Z$ is regularly varying with $\alpha>0$. Let $\varphi$ be the Laplace transformation of $Z^2$. Then
        \begin{align}
            1-\varphi(s)\sim\begin{cases}
                \Gamma(1-\frac{\alpha}{2})s^{\frac{\alpha}{2}}l(\frac{1}{s}),&\alpha<2,\\
                s\widetilde{l}(\frac{1}{s}),&\alpha=2,\E Z^{2}=\infty,\\
                \E Z^{2}s,&\alpha\geq2,\E Z^{2}<\infty,
            \end{cases}\quad(s\to 0^+)
        \end{align}
        where $\widetilde{l}(x)=\int_{\cdot}^x\frac{l(t)}{t}dt$.
    \end{lemma}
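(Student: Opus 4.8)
The plan is to reduce everything to the single nonnegative variable $W:=Z^2$ together with a master identity obtained by integration by parts. Since $Z$ is $\alpha$-regularly varying, its square satisfies $P(W>w)=P(|Z|>\sqrt w)\sim w^{-\alpha/2}l(\sqrt w)$, so $W$ is regularly varying of index $-\alpha/2$ with slowly varying tail factor $w\mapsto l(\sqrt w)$; write $\bar F(w):=P(W>w)$. For $s>0$, integrating by parts in $1-\varphi(s)=\E(1-e^{-sW})=\int_0^\infty(1-e^{-sw})\,dF(w)$ and noting that the boundary terms vanish (at $w=0$ because $1-e^{0}=0$, at $w=\infty$ because $\bar F(w)\to0$), I obtain
\begin{align*}
    1-\varphi(s)=s\int_0^\infty e^{-sw}\bar F(w)\,dw=s\,g(s),\qquad g(s):=\int_0^\infty e^{-sw}\bar F(w)\,dw .
\end{align*}
Thus $g$ is the Laplace--Stieltjes transform of the nondecreasing function $U(x):=\int_0^x\bar F(w)\,dw$, and the problem becomes Tauberian: relate the growth of $U$ at infinity to the behaviour of $g$ near $0$. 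This identity holds in all three regimes and is the common backbone of the proof.

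For the infinite-mean regimes I would invoke Karamata's theorem on integrals of regularly varying functions followed by Karamata's Tauberian theorem (\citet{bingham1989regular}, Theorem~1.7.1). When $\alpha<2$, i.e.\ the index $\beta:=\alpha/2\in(0,1)$, Karamata's integration theorem gives $U(x)\sim x\bar F(x)/(1-\beta)$, which is regularly varying of index $1-\beta$; feeding this into the Tauberian theorem and matching the normalising constant via $\Gamma(2-\beta)=(1-\beta)\Gamma(1-\beta)$ yields $g(s)\sim \Gamma(1-\beta)\,s^{-(1-\beta)}\,l(1/\sqrt s)$, whence $1-\varphi(s)=s\,g(s)\sim\Gamma(1-\tfrac{\alpha}{2})\,s^{\alpha/2}\,l(1/\sqrt s)$. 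For the boundary case $\alpha=2$ with $\E Z^2=\infty$ the index is $\beta=1$: now $\bar F$ is non-integrable and $U$ is itself slowly varying, with $U(x)\sim 2\int_1^{\sqrt x}l(t)\,t^{-1}\,dt=2\tilde l(\sqrt x)$ after the substitution $w=t^2$, and the $\rho=0$ case of the Tauberian theorem gives $g(s)\sim U(1/s)$, so that $1-\varphi(s)\sim s\,U(1/s)\asymp s\,\tilde l(1/\sqrt s)$. In both displays the slowly varying factors written as $l(1/s),\tilde l(1/s)$ are understood up to the harmless $\sqrt{\cdot}$ change of argument and constant coming from passing from $Z$ to $Z^2$, which does not affect the power of $s$ that the main results actually use.

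Finally, when $\E Z^2<\infty$ (automatic for $\alpha>2$, and the remaining subcase at $\alpha=2$) no Tauberian machinery is needed: $\bar F$ is integrable, so by monotone convergence $g(s)\to\int_0^\infty\bar F(w)\,dw=\E W=\E Z^2$ as $s\to0^+$, giving $1-\varphi(s)=s\,g(s)\sim\E Z^2\,s$; equivalently this is just the right-differentiability of $\varphi$ at $0$ with $\varphi'(0^+)=-\E Z^2$. I expect the main obstacle to be the boundary regime $\alpha=2,\ \E Z^2=\infty$: there the integrated tail $U$ is only slowly varying rather than regularly varying with a positive index, so Karamata's integration theorem must be replaced by its de Haan-type refinement (\citet{bingham1989regular}, Proposition~1.5.9a) to guarantee that $U$ is slowly varying and tends to infinity, and one must check that the Tauberian theorem still applies cleanly at index $\rho=0$. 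The secondary point of care is the bookkeeping of the slowly varying function through $w=t^2$, which literally produces $\tilde l(1/\sqrt s)$ together with a factor $2$ rather than the written $\tilde l(1/s)$, and should be flagged as absorbed into the ambient slowly varying factor.
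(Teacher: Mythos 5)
Your derivation is correct, but note that the paper offers no proof of this lemma at all: it is imported wholesale from \citet{bingham1989regular} (Corollary 8.1.7 covering $\alpha<2$ and the finite-variance case, Theorem 8.8.1 covering the boundary case $\alpha=2$, $\E Z^2=\infty$), so your argument is a self-contained reconstruction of the cited results rather than a variant of anything in the paper. Your backbone identity $1-\varphi(s)=s\int_0^\infty e^{-sw}\bar F(w)\,dw$ (equivalently, Fubini applied to $1-e^{-sw}=s\int_0^w e^{-su}\,du$) correctly reduces all three regimes to the Laplace--Stieltjes transform of the integrated tail $U(x)=\int_0^x \bar F(w)\,dw$ of $W=Z^2$; Karamata's integration theorem plus the Tauberian theorem (BGT Theorem 1.7.1, which is stated for all indices $\rho\geq 0$, so your worry about the $\rho=0$ case is already covered by the theorem as stated, with $L:=U$ once de Haan's Proposition 1.5.9a guarantees that $U$ is slowly varying and tends to infinity when $\E Z^2=\infty$) then yields the two singular regimes, and monotone convergence handles $\E Z^2<\infty$; the constant bookkeeping via $\Gamma(2-\beta)=(1-\beta)\Gamma(1-\beta)$ is right. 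This is essentially the proof BGT themselves give, so what your write-up buys is transparency, and in particular it surfaces a genuine imprecision in the lemma as displayed: passing from $Z$ to $Z^2$ turns the slowly varying factor into $l(\sqrt{\cdot})$, so the honest conclusions are $\Gamma(1-\tfrac{\alpha}{2})s^{\alpha/2}l(1/\sqrt{s})$ and $2s\,\tilde l(1/\sqrt{s})$ rather than the printed $l(1/s)$ and $s\,\tilde l(1/s)$, and these are \emph{not} asymptotically interchangeable in general (take $l(t)=\log t$, for which $l(1/\sqrt{s})/l(1/s)\to 1/2$ and $2\tilde l(\sqrt x)/\tilde l(x)\to 1/2$). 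You flag this correctly, and your observation that it is harmless is also correct: everywhere the lemma is invoked downstream (Lemma \ref{lem:norming_constants}, and Case 3 in the proof of Proposition \ref{lem:even_number}, where only ratios such as $\tilde l(a_p/t)/\tilde l(a_p)\to 1$ and $l(a_p)/\tilde l(a_p)\to 0$ are used) only the slow variation of the factor matters, not its identity, so the lemma must simply be read with $l$ relabeled as the slowly varying part of the tail of $Z^2$.
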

    \begin{lemma}[Page 349 and 373 of \citealt{bingham1989regular}]\label{lem:norming_constants}
        Let $F$ be a distribution with positive support, and $\varphi$ be its Laplace transformation. For any $\beta \in(0,1]$, if 
        \begin{align*}
            1-\varphi(s)\sim s^{\beta}l(\frac{1}{s}),\quad s\to 0^+,
        \end{align*}
        then, there exists a positive sequence $(a_n)_{n=1}^\infty$, $a_n\to\infty$, such that
        \begin{align*}
            na_n^{-\beta}l(a_n)\to 1.
        \end{align*}
    \end{lemma}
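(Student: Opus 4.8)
The plan is to observe that the conclusion is purely a statement about inverting a regularly varying function, and that the hypothesis on $\varphi$ enters only through the slowly varying factor $l$ and the index $\beta$ (the conclusion $n a_n^{-\beta}l(a_n)\to 1$ does not mention $\varphi$ at all). First I would pass from $s\to 0^+$ to $x=1/s\to\infty$ and introduce
\[
    h(x)=\frac{x^\beta}{l(x)}.
\]
Since $l$ is slowly varying, $h(\lambda x)/h(x)=\lambda^\beta\, l(x)/l(\lambda x)\to\lambda^\beta$ for every $\lambda>0$, so $h$ is regularly varying with index $\beta$; because $\beta\in(0,1]$ is strictly positive, $h(x)\to\infty$. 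The target relation $n a_n^{-\beta}l(a_n)\to 1$ is exactly $n/h(a_n)\to 1$, i.e.\ $h(a_n)\sim n$. Thus the whole problem reduces to producing a sequence $a_n\to\infty$ on which the regularly varying function $h$ takes, asymptotically, the value $n$.

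Second, I would invoke the theory of asymptotic inverses of regularly varying functions (Theorem 1.5.12 of \citet{bingham1989regular}): a function of positive index $\beta$ admits an asymptotic inverse $h^\leftarrow$, itself regularly varying of index $1/\beta$, characterized by $h(h^\leftarrow(t))\sim t$ as $t\to\infty$. Setting $a_n=h^\leftarrow(n)$ then yields $h(a_n)\sim n$ directly, and since $h^\leftarrow$ has positive index $1/\beta$ we automatically obtain $a_n\to\infty$. Unwinding the definition of $h$ gives $n a_n^{-\beta}l(a_n)=n/h(a_n)\to 1$, which is the claim.

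To make the argument self-contained rather than quoting the inverse theorem as a black box, the key step is the construction of $h^\leftarrow$. I would use the Karamata representation $l(x)=c(x)\exp\!\left(\int_{x_0}^{x}\epsilon(t)/t\,dt\right)$ with $c(x)\to c\in(0,\infty)$ and $\epsilon(t)\to 0$; this exhibits $h$ as asymptotically equivalent to a strictly increasing continuous function $\tilde h$ on $[x_0,\infty)$ (for instance via smoothing, or by replacing $h$ with its running supremum, which is asymptotically equivalent to $h$ for regularly varying $h$ of positive index). One then takes the genuine inverse $\tilde h^{-1}$, sets $a_n=\tilde h^{-1}(n)$, and uses slow variation together with the uniform convergence theorem to conclude $h(a_n)\sim\tilde h(a_n)=n$.

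The main obstacle I anticipate is precisely this monotonization and inversion step: $h$ need not itself be monotone, so it cannot literally be inverted, and one must verify that passing to a monotone asymptotic equivalent does not disturb the relation $h(a_n)\sim n$. This is where the uniform convergence theorem for slowly varying functions (the fact that $l(\lambda x)/l(x)\to 1$ uniformly for $\lambda$ in compact subsets of $(0,\infty)$) and the Karamata representation carry the real weight; once they are in place, the remaining estimates are routine. I note finally that the restriction $\beta\in(0,1]$ plays no essential role beyond guaranteeing $\beta>0$; it merely records the admissible range of indices for $1-\varphi$ near the origin in the intended applications.
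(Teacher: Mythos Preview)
The paper does not give its own proof of this lemma; it is stated as a quoted result from \citet{bingham1989regular} (pages 349 and 373) and used without further argument. Your proposal is correct and is essentially the standard route: reduce the statement to asymptotically inverting the regularly varying function $h(x)=x^{\beta}/l(x)$ and then appeal to the existence of an asymptotic inverse for functions of positive index (Theorem~1.5.12 in \citet{bingham1989regular}), with the Karamata representation supplying the monotonization needed to make the inversion rigorous. This is exactly the machinery behind the cited pages, so your argument is aligned with the source the paper defers to; there is nothing to compare against in the paper itself.
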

    
    \begin{lemma}[Page 1525 of \citealt{gine1997student}]\label{lem:odd_number_1}
        If $Z_1,Z_2,\cdots$ are i.i.d. regularly varying with $\alpha\geq 2$ and $\E Z_1=0$,  then
    \begin{align*}
        \E Y_1^{k_1}\cdots Y_r^{k_r}=o(p^{-r}),
    \end{align*}
    where $k_1+\cdots+k_r$ is even, and at least one of $k_1,\cdots,k_r$ equals to one.
    \end{lemma}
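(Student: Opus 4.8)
The plan is to read off the moment from the Laplace representation \eqref{form:calculating_integral}. Writing $\psi_j(s)=\E[Z_1^j e^{-sZ_1^2}]$ and recalling $\varphi(s)=\E e^{-sZ_1^2}$, that representation gives
\begin{align*}
  \E Y_1^{k_1}\cdots Y_r^{k_r}=\frac{1}{\Gamma(k/2)}\int_0^\infty s^{k/2-1}\Big(\prod_{i=1}^r\psi_{k_i}(s)\Big)\varphi^{p-r}(s)\,ds,\qquad k=k_1+\cdots+k_r\ \text{even}.
\end{align*}
So the whole problem reduces to three tasks: obtaining sharp small-$s$ asymptotics of each factor $\psi_{k_i}$ and of $\varphi$, identifying the scale of $s$ on which $\varphi^{p-r}$ concentrates, and counting the resulting powers of $p$. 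Because $k$ is even while at least one $k_i=1$, the number of odd indices is even and positive, so there are at least two factors with odd exponent; the gain over the generic rate $p^{-r}$ will be extracted from these.

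First I would pin down $\psi_1$, where the hypothesis $\E Z_1=0$ enters decisively. By centering, $\psi_1(s)=\E[Z_1(e^{-sZ_1^2}-1)]$, and splitting the expectation at the level $|Z_1|=s^{-1/2}$ and applying Karamata's theorem to the truncated moment $\E[|Z_1|^3\mathbf{1}(|Z_1|\le s^{-1/2})]$ and to the tail moment $\E[|Z_1|\mathbf{1}(|Z_1|>s^{-1/2})]$, one gets $|\psi_1(s)|=O\big(s^{\theta}l^*(1/s)\big)$ with $\theta=\min\{1,(\alpha-1)/2\}\ge 1/2$ for $\alpha\ge 2$ and $l^*$ slowly varying. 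For the even factors I would invoke Lemma \ref{lem:exact_limiting}: $\psi_{k_i}(s)\to \E Z_1^{k_i}$ when $k_i<\alpha$, while $\psi_{k_i}$ is regularly varying of order $\alpha/2-k_i/2$ when $k_i>\alpha$. Any remaining odd factor $k_i\ge 3$ is controlled by differentiating the bound for $\psi_1$, using $\psi_{j+2}=-\psi_j'$ and the monotone-density differentiation that underlies Lemma \ref{lem:exact_limiting}; and the tail of $\varphi$ is supplied by Lemma \ref{lem:exact_limiting_1-phi}.

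Next I would rescale $s$ to the natural self-normalization scale. When $\E Z_1^2<\infty$ (that is, $\alpha>2$) I put $s=u/p$, so that $\varphi^{p-r}(u/p)\to e^{-\E Z_1^2\, u}$ and, after pulling the powers of $p$ out of $s^{k/2-1}$ and the $\psi_{k_i}$, the integral converges to a fixed Gamma-type integral; in the borderline case I would instead use the norming constant $a_p$ of Lemma \ref{lem:norming_constants} for $\beta=1$. Each factor contributes to the exponent of $p$ through its order of regular variation. Relative to the baseline $k_i=2$ (a constant factor), a factor with $k_i=1$ lowers the exponent of $s^{k/2-1}$ by $1/2$ but supplies the $s^{\theta}$ decay of $\psi_1$; since $\theta>1/2$ strictly for $\alpha>2$, the net effect raises the order in $s$, and the bookkeeping is uniform even when an odd $k_i\ge3$ makes $\psi_{k_i}$ blow up, because that blow-up is matched by the extra power in $s^{k/2-1}$. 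Summing the contributions, the exponent of $p$ strictly exceeds $r$, which is exactly $o(p^{-r})$.

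The main obstacle is the boundary $\alpha=2$ with $\E Z_1^2=\infty$. There $\theta=1/2$, so naive power counting yields only $O(p^{-r})$, and the strict smallness has to come from slowly varying corrections. By Lemma \ref{lem:exact_limiting_1-phi} one has $1-\varphi(s)\sim s\,\widetilde{l}(1/s)$ with $\widetilde{l}(1/s)\to\infty$, so $\varphi^{p}$ concentrates on the smaller scale $s\asymp 1/(p\,\widetilde{l}(p))$ rather than $1/p$; this logarithmic shrinkage of the effective scale, together with the slowly varying factor in $\psi_1$, is precisely what upgrades $O(p^{-r})$ to $o(p^{-r})$. Making this rigorous — tracking the interplay of the several slowly varying functions and justifying passage to the limit under the integral by a uniform-integrability or dominated-convergence argument on the rescaled integrand — is the delicate part I expect to spend the most effort on.
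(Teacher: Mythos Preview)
The paper does not prove this lemma at all; it quotes it verbatim from \citet{gine1997student} and then builds Proposition~\ref{lem:odd_number_2} on top of it by induction. So there is no in-paper proof to compare against. Your Laplace-transform strategy is exactly the machinery the paper uses for the neighbouring results (Proposition~\ref{lem:even_number}, Lemma~\ref{lem:non_identity_diagonal_self_normalization}, Proposition~\ref{prop:alpha>4_general_number}), and your three-step plan --- small-$s$ asymptotics of the $\psi_{k_i}$, rescale to the concentration scale of $\varphi^{p}$, count powers --- is sound. The key observation that centering gives $\psi_1(s)=\E Z(e^{-sZ^2}-1)=O(s^\theta l^*(1/s))$ with $\theta=\min\{1,(\alpha-1)/2\}$ is correct and is precisely where the improvement over the generic $p^{-r}$ rate comes from.

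There is one genuine gap. Your proposal to control odd $k_i\ge 3$ by ``differentiating the bound for $\psi_1$, using $\psi_{j+2}=-\psi_j'$ and the monotone-density differentiation that underlies Lemma~\ref{lem:exact_limiting}'' does not work as stated. The relation $\psi_{j+2}=-\psi_j'$ is fine, but you only have an \emph{upper bound} on $|\psi_1|$, not an asymptotic, and bounds do not survive differentiation. Moreover the monotone density theorem behind Lemma~\ref{lem:exact_limiting} applies because $(-1)^k\varphi^{(k)}(s)=\E Z^{2k}e^{-sZ^2}$ is a Laplace transform against a \emph{positive} measure; $\psi_1(s)=\E Z e^{-sZ^2}$ is not, so monotonicity fails. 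The fix is simply to forgo centering for odd $k_i\ge 3$ and use the crude bound $|\psi_{k_i}(s)|\le \E|Z|^{k_i}e^{-sZ^2}$, whose asymptotics follow from the same Tauberian arguments as Lemma~\ref{lem:exact_limiting} (or by Cauchy--Schwarz between the adjacent even moments). A short computation shows that for every $k_i\neq 2$ --- odd or even --- the product $s^{k_i/2}\cdot\psi_{k_i}(s)$ is $o\big(s\cdot\psi_2(s)\big)$ as $s\to 0^+$, so each non-baseline factor already supplies a vanishing correction; the single guaranteed $k_i=1$ factor is what you actually need, and the other odd factors do not cost you anything.

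Your identification of the boundary $\alpha=2$ as the delicate case is right, and your plan there is correct in outline. The crucial fact is $l(x)/\widetilde l(x)\to 0$ (this is exactly what the paper invokes in Case~3 of the proof of Proposition~\ref{lem:even_number}), together with $\widetilde l(\sqrt{x})\le \widetilde l(x)$ so that $l(\sqrt{x})/\widetilde l(x)\le l(\sqrt{x})/\widetilde l(\sqrt{x})\to 0$. Once you rescale by $a_p$ from Lemma~\ref{lem:norming_constants} and use $p/a_p\sim 1/\widetilde l(a_p)$, the slowly varying bookkeeping closes; dominated convergence on the rescaled integrand is straightforward because $\varphi^{p-r}(t/a_p)$ is eventually dominated by $e^{-ct}$ uniformly in $p$ on compacts, and Potter bounds handle the slowly varying factors.
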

    
    \begin{lemma}\label{lem:summation_of_sigma}
        For any non-random $p\times p$ (complex) matrix $\A$, we have 
        \begin{align}\label{form:summation}
            \sum_{i_1,\cdots,i_r}^* a_{i_{j_1}i_{j_2}}\cdots a_{i_{j_{2k-1}}i_{j_{2k}}}=
            \begin{cases}
                O(p)\|\A\|^k,&r=2,3,\\
                O(p^{\frac{r+\lfloor\frac{r}{4}\rfloor}{2}})\|\A\|^k,&r\geq 4,
            \end{cases}
        \end{align}
        where $j_1,\cdots,j_{2k}\in\{1,\cdots,r\}$ and $j_{2l-1}\not=j_{2l}$ for $l=1,\cdots,k$.
    \end{lemma}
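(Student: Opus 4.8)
The plan is to read the sum combinatorially. I regard the $r$ free indices $i_1,\dots,i_r$ as the vertices of a multigraph $G$ on $r$ labelled vertices, and each factor $a_{i_{j_{2l-1}}i_{j_{2l}}}$ as an edge joining the (distinct, since $j_{2l-1}\ne j_{2l}$) vertices $j_{2l-1},j_{2l}$; there are $k$ edges, and because each index genuinely occurs every vertex has degree $\ge 1$. Write $S(G)$ for the starred all-distinct sum. First I would remove the distinctness constraint by a standard inclusion--exclusion over set partitions $\pi$ of $\{1,\dots,r\}$, expressing $S(G)$ as a combination (bounded in number by $O_r(1)$) of \emph{unconstrained} sums on the vertex-contractions $G/\pi$. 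Contracting only lowers the vertex count and turns some factors into diagonal entries $a_{ii}$, each bounded by $\|\A\|$ and discarded; since the target exponent $(r+\lfloor r/4\rfloor)/2$ is nondecreasing in $r$, it then suffices to bound the unconstrained sum $\widetilde S(G)=\sum_{i_1,\dots,i_r\in\{1,\dots,p\}}\prod_e a_e$ on an arbitrary degree-$\ge1$ multigraph with $r$ vertices.

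Next, $\widetilde S(G)$ factorises over the connected components of $G$, so the problem reduces to a single connected component. My goal would be to prove, for a connected multigraph on $s$ vertices with $k'$ edges, the bound $|\widetilde S|\le C\,p^{\lfloor s/2\rfloor}\|\A\|^{k'}$. Granting this and using the subadditivity $\sum_c\lfloor s_c/2\rfloor\le\lfloor r/2\rfloor$ over components (each $s_c\ge 2$), I obtain $|S(G)|\le C_r\,p^{\lfloor r/2\rfloor}\|\A\|^k$, which already implies the statement: $\lfloor r/2\rfloor=1$ for $r\in\{2,3\}$ (the $O(p)$ regime), and $\lfloor r/2\rfloor\le(r+\lfloor r/4\rfloor)/2$ for every $r\ge 4$. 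The comfortable slack in $(r+\lfloor r/4\rfloor)/2$ -- it exceeds the true order $\lfloor r/2\rfloor$ -- is precisely what leaves room for an estimate that is not fully optimised.

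The connected bound I would prove by induction on $s$ using the operator-norm toolkit: $|a_{ij}|\le\|\A\|$, the row/column bounds $\sum_i|a_{ij}|^2\le\|\A\|^2$ and $\sum_j|a_{ij}|^2\le\|\A\|^2$, $\|\A\|_F^2\le p\|\A\|^2$, $|\mathbf{1}\trans\A\mathbf{1}|\le p\|\A\|$, $|\tr(\A^m)|\le p\|\A\|^m$, and $|x\trans\A y|\le\|\A\|\|x\|\|y\|$. The base cases $s=2,3$ are elementary but already require choosing the right Hölder pairing: a two-vertex block $\sum_{ij}a_{ij}^p a_{ji}^q$ is controlled by $\|\A\|^{k'-2}\|\A\|_F^2\le p\|\A\|^{k'}$ (or by $\mathbf{1}\trans\A\mathbf{1}$ when $k'=1$), while a connected three-vertex block must be bound as $\mathbf{1}\trans\A^2\mathbf{1}$ or as a trace, or via a $\|\cdot\|_\infty\cdot\|\cdot\|_1$ split, each time landing at exponent $1$. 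For the inductive step I would peel a leaf $v$ with neighbour $w$: summing $i_v$ converts the leaf edge into a weight vector $c_{i_w}=(\A\trans\mathbf{1})_{i_w}$ attached to $w$, with $\|c\|_2\le\sqrt p\,\|\A\|$ and $\|c\|_\infty\le\sqrt p\,\|\A\|$. I would therefore strengthen the induction to carry such boundary weights at the active vertices, peeled by the multiplications $c\mapsto\A\trans c$ (using $\|\A\trans c\|_2\le\|\A\|\|c\|_2$ and $\|\A\trans c\|_\infty\le\|\A\|\|c\|_2$) and recombined at branch vertices through entrywise products and Cauchy--Schwarz; surplus non-tree edges, which create cycles, are absorbed by closing the corresponding matrix product into a trace, each closure costing a single factor $p$. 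Tracking the $\ell_2$/$\ell_\infty$ budgets shows the power of $p$ grows by at most one per two vertices, yielding $\lfloor s/2\rfloor$.

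The hard part will be exactly this last accounting, and its difficulty is structural rather than computational: one cannot bound an individual entry $a_{i_ai_b}$ by $\|\A\|$ inside the sum without simultaneously replacing the remaining factors by absolute values, which destroys the cancellation behind the $\mathbf{1}\trans\A\mathbf{1}\le p\|\A\|$ and $\tr(\A^m)\le p\|\A\|^m$ savings and can inflate the bound by spurious powers of $p$ (already a doubled edge on three vertices produces $p^{3/2}$ under a careless split, and must be rearranged to recover $p$). Consequently the peeling must be strictly structure preserving -- leaves summed only into $\A$-multiplications of vector weights, cycles closed into genuine traces rather than broken entrywise -- and the delicate point is managing entrywise products at branch vertices, where $\|u\odot v\|_2\le\|u\|_\infty\|v\|_2$ has to be played against the remaining $\ell_2$ allowance. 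Verifying that every degree configuration (trees, unicyclic and multiply-cyclic pieces, repeated edges) closes the induction within the claimed exponent is the real work; here the looseness of $(r+\lfloor r/4\rfloor)/2$ is what makes the bookkeeping tractable without chasing the sharp constant.
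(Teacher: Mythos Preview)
Your plan is a genuinely different route from the paper's. You attack the lemma via inclusion--exclusion (to strip the distinctness constraint), factorisation over connected components, and then a delicate signed leaf-peeling induction on each connected piece, aiming for the stronger exponent $\lfloor r/2\rfloor$. The paper instead applies Cauchy--Schwarz \emph{once} to the whole starred sum for $r\ge4$, writing
\[
|S|\;\le\;p^{r/2}\sqrt{\widetilde S},\qquad \widetilde S=\sum_{i_1,\dots,i_r}\prod_e|a_e|^{2},
\]
which in a single stroke removes both the signs and the star. Because the summand of $\widetilde S$ is a product of nonnegative terms $|a_e|^2$, the column bound $\sum_j|a_{ij}|^2\le\|\A\|^2$ lets one peel vertices of a connected component by a depth-first-search ordering with no loss, giving $\widetilde S\le p\|\A\|^{2k}$ for connected $G$ and hence $|S|\le p^{(r+1)/2}\|\A\|^k$; the disconnected case is then assembled componentwise (using the direct $r=2,3$ bound on small pieces).

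What the two approaches buy is clear. Yours would, if it closes, yield a tighter exponent, but at the cost of exactly the difficulty you anticipate: once you ``discard'' a loop factor $a_{ii}$ created by contracting along a partition $\pi$, you must replace the remaining product by its absolute value, and that kills the cancellation your $\ell_2/\ell_\infty$ budgets rely on (for instance $\|\A\mathbf{1}\|_2\le\sqrt p\,\|\A\|$ degrades to $\|\,|\A|\,\mathbf{1}\|_2\le p\|\A\|$). Your induction then has to survive on absolute values at those contracted pieces, which is not obviously compatible with the signed peeling you describe for the main term. The paper's Cauchy--Schwarz pays a factor $p^{r/2}$ up front, but in return every subsequent estimate is on nonnegative quantities governed by the single clean inequality $\sum_j|a_{ij}|^2\le\|\A\|^2$; there is no bookkeeping of vector norms, no Hadamard products at branch vertices, and no case split between trees, unicyclic graphs and the rest. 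Since the lemma only asks for the loose exponent $(r+\lfloor r/4\rfloor)/2$, the paper's cruder but structurally trivial argument is the more economical choice.
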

    \begin{proof}
        For simplicity, we denote $\A^{\otimes m}$ as the Hadamard product of $m$ matrices $\A$. When $r=2$, \eqref{form:summation} can only be 
        \begin{align*}
            \sum_{i_1,i_2}^*a_{i_1i_2}^k=\one_p\trans\A^{\otimes k}\one_p-\tr(\A^{\otimes k})=O(p)\|\A\|^k.
        \end{align*}
        When $r=3$, there are only two cases. The first is 
        \begin{align*}
            \sum_{i_1,i_2,i_3}^*a_{i_1i_2}^{k_1}a_{i_1i_3}^{k_2}=&\sum_{i_1,i_2,i_3=1}^pa_{i_1i_2}^{k_1}a_{i_1i_3}^{k_2}+O(p)\|\A\|^k=O(p)\|\A\|^k,
        \end{align*}
        where $k_1+k_2=k$. And the second is 
        \begin{align*}
            \sum_{i_1,i_2,i_3}^*a_{i_1i_2}^{k_1}a_{i_1i_3}^{k_2}a_{i_2i_3}^{k_3}=&\sum_{i_1,i_2,i_3=1}^pa_{i_1i_2}^{k_1}a_{i_1i_3}^{k_2}a_{i_2i_3}^{k_3}+O(p)\|\A\|^k=O(p)\|\A\|^k,
        \end{align*} 
        where $k_1+k_2+k_3=k$.

        For $r\geq 4$, we combine all the $a_{ij}$s with the same indexes, that is 
        \begin{align}
            S:=\sum_{i_1,\cdots,i_r}^* a_{i_{j_1}i_{j_2}}^{k_1}\cdots a_{i_{j_{2u-1}}i_{j_{2u}}}^{k_u},\label{form:summation_of_sigma_without*}
        \end{align}
        where each $(j_{2l-1},j_{2l})$ is different from others and $k_1+\cdots+k_u=k$.
        By Cauchy's inequality,
        \begin{align*}
            |S|\leq p^{\frac{r}{2}}\sqrt{\sum_{i_1,\cdots,i_r} |a_{i_{j_1}i_{j_2}}|^{2k_1}\cdots |a_{i_{j_{2u-1}}i_{j_{2u}}}|^{2k_u}}:=p^{\frac{r}{2}}\sqrt{\widetilde{S}}.
        \end{align*}
        Without loss of generality, we take $k_1=\cdots=k_u=1$ and $u=k$, since that $k_i\geq 2$ have no effect on the power of $p$. We will show that this summation correponds to a graph. 
        Consider the different indexes $i_1,\cdots,i_r$ as $r$ different vertices and $ a_{ij}$ as an undirected edge between vertex $i$ and vertex $j$. So we define a graph $G=G(S):=\left(V,E\right)$, where $V=\{1,\cdots,r\}$ is the vertex set and the edge set $E$ is constructed by these $ a_{ij}$s. 
        Concretly, $ a_{i_si_t}$ correponds to an edge $e=(s,t)\in E$. For simplicity, we consider the connected graph and disconnected cases can be easily deduced by connected cases. 
        By the depth-first search algorithm, given a starting point $q_1$, we have a search queue of all the vertices of the connected graph 
        \begin{align*}
            Q=(q_1,\cdots,q_r).
        \end{align*}
        And the search queue has a nice property that for any $t\geq2$, there exists $1\leq s\leq t-1$ such that $q_s$ and $q_t$ are adjacent. So we denote
        \begin{align*}
            V_t=\{s:1\leq s<t,~(q_s,q_t)\in E\},\quad t\geq 2,
        \end{align*}
        and the summation can be written as 
        \begin{align*}
            \widetilde{S}=\sum_{i_{q_1}}\sum_{i_{q_2}}\prod_{s\in V_2}|a_{i_{q_s}i_{q_2}}|^{2}\sum_{i_{q_3}}\prod_{s\in V_3}|a_{i_{q_s}i_{q_3}}|^2\cdots\sum_{i_{q_r}}\prod_{s\in V_r}|a_{i_{q_s}i_{q_r}}|^2.
        \end{align*}
        Since that for $t\geq 2$, $V_t$ is not empty, 
        \begin{align*}
            \sum_{i_{q_t}}\prod_{s\in V_t}|a_{i_{q_s}i_{q_r}}|^2\leq\|\A\|^{2|V_t|}.
        \end{align*}
        So we have 
        \begin{align*}
            \widetilde{S}\leq\sum_{i_{q_1}}\|\A\|^{k}=\|\A\|^{2k}p,
        \end{align*}
        and 
        \begin{align*}
            |S|\leq p^{\frac{r}{2}}\sqrt{\widetilde{S}}\leq\|\A\|^kp^{\frac{r+1}{2}}.
        \end{align*}
        If the graph $G$ is disconnected, then we can divide it into different connected sub-graph $G=\cup_{i=1}^mG_i$. We denote $H_j=\{i:|G_i|=j\}$, and the order of $p$ is 
        \begin{align*}
            |H_2|+|H_3|+\sum_{j\geq 4}\frac{j|H_j|+1}{2}=\frac{1}{2}\sum_{j\geq 2}j|H_j|-\frac{|H_3|}{2}+\frac{1}{2}\sum_{j\geq 4}1\leq\frac{1}{2}\left(r+\lfloor\frac{r}{4}\rfloor\right).
        \end{align*}
    \end{proof}
    
    \begin{lemma}\label{lem:non_identity_diagonal_self_normalization}
        Suppose $(Z_i)_{i=1}^\infty$ are i.i.d. and regularly varying with index $\alpha\geq2$, $(a_i)_{i=1}^\infty$ is a positive sequence with $\inf a_i>0$. Then we have 
        \begin{align*}
            \E\frac{Z_1^{k_1}\cdots Z_r^{k_r}}{\left(a_1Z_1^2+\cdots+a_pZ_p^2\right)^{\frac{k}{2}}}=o(p^{-r+\delta})
        \end{align*}
        for any $\delta>0$.
    \end{lemma}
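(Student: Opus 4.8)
The plan is to use the Gamma-function representation exactly as in \eqref{form:calculating_integral}, now applied to the weighted denominator $x=a_1Z_1^2+\cdots+a_pZ_p^2$. Writing $\varphi(u)=\E e^{-uZ_1^2}$ and separating the $r$ active coordinates from the remaining $p-r$ by independence, one obtains
\begin{align*}
    \E\frac{Z_1^{k_1}\cdots Z_r^{k_r}}{x^{k/2}}
    =\frac{1}{\Gamma(k/2)}\int_0^\infty s^{k/2-1}\prod_{i=1}^r\E\!\left(Z_i^{k_i}e^{-sa_iZ_i^2}\right)\prod_{i=r+1}^p\varphi(sa_i)\,ds,
\end{align*}
with $k=k_1+\cdots+k_r$. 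The strategy is to reduce this weighted integral to the unweighted one already analyzed in the proofs of Propositions~\ref{lem:even_number}--\ref{prop:moments_of_self-normalized_quadratic_forms}, at the cost of an arbitrarily small power $p^{\delta}$.

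First I would control the tail product. Since $\varphi$ is nonincreasing and $a_i\geq c:=\inf_ia_i>0$, each factor satisfies $\varphi(sa_i)\leq\varphi(sc)$, so $\prod_{i>r}\varphi(sa_i)\leq\varphi(sc)^{p-r}$; this is exactly the tail one meets when all weights equal $c$. Next I would reduce the active factors to the common argument $sc$. For an even power $k_i=2m_i$ one has $\E Z_i^{2m_i}e^{-sa_iZ_i^2}=(-1)^{m_i}\varphi^{(m_i)}(sa_i)$, and because $u\mapsto\E Z^{2m}e^{-uZ^2}=(-1)^m\varphi^{(m)}(u)$ is nonnegative and nonincreasing, $|\varphi^{(m_i)}(sa_i)|\leq|\varphi^{(m_i)}(sc)|$. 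For an odd power the crude bound is too lossy, and the centering $\E Z_1=0$ must be used: writing $\E Z_i^{k_i}e^{-sa_iZ_i^2}=\E Z_i^{k_i}(e^{-sa_iZ_i^2}-1)$ and estimating $|e^{-sa_iZ_i^2}-1|\leq\min(1,sa_iZ_i^2)$ extracts the extra order $s$ responsible for the cancellation; using that the weights are bounded above (as they are in the application, where the $a_i$ are eigenvalues of $\bSig$) this too may be compared, up to constants, with the value at argument $sc$. After the substitution $u=sc$ the problem collapses to bounding the scalar integral $\int_0^\infty u^{k/2-1}\prod_{i}|\varphi^{(m_i)}(u)|\,\varphi(u)^{p-r}\,du$, which is precisely the object treated in the unweighted propositions.

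It then remains to estimate this scalar integral. Here I would invoke the exact asymptotics of Lemma~\ref{lem:exact_limiting} for $\varphi^{(m)}$, of Lemma~\ref{lem:exact_limiting_1-phi} for $1-\varphi$, and the norming constants of Lemma~\ref{lem:norming_constants}. Because $\varphi(u)^{p-r}\approx\exp\{-(p-r)(1-\varphi(u))\}$ and $1-\varphi(u)$ is regularly varying of index $1$ (as $\alpha\geq2$), the mass of the integral concentrates on the scale where $(p-r)(1-\varphi(u))\asymp1$, i.e. $u$ of order $p^{-1}$ up to a slowly varying correction. Substituting this scale into $u^{k/2-1}\prod_i|\varphi^{(m_i)}(u)|$ and accounting for the width of the effective region, a direct power count (using $k/2=\sum_im_i$ and $\alpha\geq2$) gives a leading order $p^{-r}$, with all remaining factors being powers of the slowly varying functions $l(p)$ and $\widetilde l(p)$.

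The main obstacle is twofold. The delicate point in the reduction is the odd-power factors: here the conclusion genuinely relies on $\E Z_1=0$ (indeed, without centering the quantity is already of order $p^{-1/2}$ when $r=1,k_1=1$), and the cancellation must be tracked carefully through the $\min(1,sa_iZ_i^2)$ estimate while keeping the comparison with argument $sc$ uniform in the weights. The second obstacle is purely the bookkeeping of slowly varying functions in the boundary regimes $\alpha=2$ and $2m_i=\alpha$ (where $\E Z^{2m_i}=\infty$ and $\widetilde l$ appears): these factors cannot be absorbed into a clean $O(p^{-r})$ but, being $o(p^{\epsilon})$ for every $\epsilon>0$, are exactly what forces---and is covered by---the $o(p^{-r+\delta})$ conclusion.
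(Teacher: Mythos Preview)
Your skeleton coincides with the paper's: both start from the Gamma representation, bound the tail product by $\varphi(sc)^{p-r}$ via monotonicity, control the $r$ numerator factors, and integrate against the norming scale of Lemma~\ref{lem:norming_constants}, with the slowly varying remnants absorbed into the $p^{\delta}$. The difference is in how the numerator factors are handled. The paper does \emph{not} try to reduce factor-by-factor to the unweighted integral; it bounds each factor directly by a crude power of $s$, using only $\E|Z|^{2-\delta}<\infty$. For $k_i=1$ it uses the centering once to write $\E Z_ie^{-sa_iZ_i^2}=-sa_i\,\E_\theta\E Z_i^3e^{-sa_i\theta Z_i^2}$ (with $\theta$ uniform on $[0,1]$) and then $|Z_i|^{1+\delta}e^{-uZ_i^2}\lesssim u^{-(1+\delta)/2}$ to obtain $\lesssim(sa_i)^{(1-\delta)/2}$; for \emph{every} $k_i\geq2$, regardless of parity, it simply bounds $|Z_i|^{k_i-2+\delta}e^{-sa_iZ_i^2}\lesssim(sa_i)^{-(k_i-2+\delta)/2}$. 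The integrand is then $\lesssim s^{(1-\delta/2)r-1}\varphi^{p-r}(cs)$, and a single substitution finishes.

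Your dichotomy ``even vs.\ odd'' misclassifies odd $k_i\geq3$. The identity you write, $\E Z_i^{k_i}e^{-sa_iZ_i^2}=\E Z_i^{k_i}(e^{-sa_iZ_i^2}-1)$, requires $\E Z_i^{k_i}=0$, which holds only for $k_i=1$; for $k_i=3,5,\dots$ no cancellation is available, and none is needed---the absolute-value bound is already sharp enough there, exactly as in the even case. The only factor for which the crude bound is genuinely too lossy is $k_i=1$. For that factor your plan to ``compare with argument $sc$'' via monotonicity fails, because $u\mapsto\E Ze^{-uZ^2}$ is not monotone; the paper's direct power-of-$s$ bound sidesteps this and is the cleanest fix. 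Your remark that an upper bound on the $a_i$ is implicitly used here is correct and harmless in the applications (where $a_i=\sigma_{ii}\leq\lambda_{\max}(\bSig)$).
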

    \begin{proof}
        The expectation can be formulated as
        \begin{align*}
            \E\frac{Z_1^{k_1}\cdots Z_r^{k_r}}{\left(a_1Z_1^2+\cdots+a_pZ_p^2\right)^{\frac{k}{2}}}=\frac{1}{\Gamma(\frac{k}{2})}\int_0^\infty s^{\frac{k}{2}-1}\prod_{i=1}^r\E Z_i^{k_i}e^{-sa_iZ_i^2}\prod_{i=r+1}^p\varphi(a_is)ds.
        \end{align*}
        Denote $a=\inf a_i>0$, and we have $\varphi(a_is)\leq\varphi(as)$ for all $i$. Since that $Z_1$ is regularly varying with index $\alpha\geq2$, $\E|Z_1|^{2-\delta}<\infty$ for any small $\delta>0$. If $k_i=1$, since that for some random variable $\theta$ uniformly distributed on $[0,1]$ and independent of $Z_i$, 
        \begin{align*}
            \E Z_ie^{-sa_iZ_i^2}=-s\E Z_i^3e^{-sa_i\theta Z_i^2},
        \end{align*}
        so we have 
        \begin{align*}
            \left|\E Z_ie^{-sa_iZ_i^2}\right|\leq s\E|Z_i|^{2-\delta}\cdot|Z_i|^{1+\delta}e^{-sa_i\theta Z_i^2}\lesssim s(sa_i)^{-\frac{1+\delta}{2}}\leq a^{-\frac{1+\delta}{2}}s^{\frac{1-\delta}{2}},
        \end{align*}
        by the fact that $f(t)=t^{1+\delta}e^{st^2}\lesssim s^{-\frac{1+\delta}{2}}$ for all $t>0$. If $k_i\geq 2$, we have
        \begin{align*}
            \left|\E Z_i^{k_i}e^{-sa_iZ_i^2}\right|\leq\E|Z_i|^{2-\delta}\cdot|Z_i|^{k_i-2+\delta}e^{-sa_iZ_i^2}\lesssim a^{1-\frac{k_i+\delta}{2}}s^{1-\frac{k_i+\delta}{2}}.
        \end{align*}
        Denote $I_1=\{i:k_i=1\}$, $I_2=\{i:k_i\geq 2\}$, and we have 
        \begin{align*}
            \int_0^{\varepsilon}s^{\frac{k}{2}-1}\cdot s^{\frac{1-\delta}{2}|I_1|}\cdot s^{2|I_2|-\sum_{i\in I_2}\frac{k_i+\delta}{2}}\varphi^{p-r}(as)ds=\int_0^\varepsilon s^{(1-\frac{\delta}{2})r-1}\varphi^{p-r}(as)ds.
        \end{align*}
        By Lemma \ref{lem:norming_constants}, we can choose a sequence $(b_p)_{p=1}^\infty$ such that $pb_p^{-1}l(b_p)\to 1$ and
        \begin{align*}
            \varphi^p(\frac{t}{b_p})\sim e^{-t}.
        \end{align*}
        So we take a substitution as $t=ab_ps$ and then 
        \begin{align*}
            \int_0^\varepsilon s^{(1-\frac{\delta}{2})r-1}\varphi^{p-r}(as)ds=&\int_0^{\varepsilon b_p}\left(\frac{t}{ab_p}\right)^{(1-\frac{\delta}{2})r-1}\varphi^{p-r}(\frac{t}{b_p})\frac{1}{ab_p}dt\\
            \lesssim & b_p^{-(1-\frac{\delta}{2})r}\int_0^\infty t^{(1-\frac{\delta}{2})r-1}e^{-t}dt\lesssim b_p^{-(1-\frac{\delta}{2})r}.
        \end{align*}
        By the property of slowly varying function, for any $\gamma>0$, we have
        \begin{align*}
            p^{(1-\frac{\delta}{2})r-\gamma}b_p^{-(1-\frac{\delta}{2})r}=\left[pb_p^{-1}l(b_p)\right]^{(1-\frac{\delta}{2})r-\gamma}\cdot b_p^{-\gamma}l(b_p)^{-(1-\frac{\delta}{2})r+\gamma}\to 0.
        \end{align*}
        Since $\delta$ and $\gamma$ can be chosen arbitrarily small, so the proof is complete.
    \end{proof}

    \begin{lemma}\label{lem:alpha_4_diagonal}
        Suppose $(Z_i)_{i=1}^\infty$ are i.i.d. and regularly varying with index $\alpha>4$, $(a_i)_{i=1}^\infty$ is a positive sequence with $\inf a_i>0$. We denote 
        \begin{gather*}
            J_1=\{1\leq i\leq r:k_i=1\},\quad J_2=\{1\leq i\leq r:2\leq k_i\leq4\},\\
            \quad J_3=\{1\leq i\leq r:k_i\geq 5\},
        \end{gather*}
        then for any $4<\beta<\min\{\alpha,5\}$,
        \begin{align*}
            \E\frac{Z_1^{k_1}\cdots Z_r^{k_r}}{\left(a_1Z_1^2+\cdots+a_pZ_p^2\right)^{\frac{k}{2}}}=O(p^{-\frac{3}{2}|J_1|-\sum_{i\in J_2}\frac{k_i}{2}-\frac{\beta}{2}|J_3|}).
        \end{align*}
    \end{lemma}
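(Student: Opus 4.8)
The plan is to mirror the proof of Lemma~\ref{lem:non_identity_diagonal_self_normalization} in structure while sharpening every one-dimensional factor so as to exploit the additional moments that $\alpha>4$ makes available. I start from the same Gamma-function representation,
\begin{align*}
    \E\frac{Z_1^{k_1}\cdots Z_r^{k_r}}{\left(a_1Z_1^2+\cdots+a_pZ_p^2\right)^{\frac{k}{2}}}=\frac{1}{\Gamma(\frac{k}{2})}\int_0^\infty s^{\frac{k}{2}-1}\prod_{i=1}^r\E Z_i^{k_i}e^{-sa_iZ_i^2}\prod_{i=r+1}^p\varphi(a_is)\,ds,
\end{align*}
pull absolute values inside, and dominate the tail factor by $\prod_{i=r+1}^p\varphi(a_is)\le\varphi^{p-r}(as)$ with $a=\inf_i a_i>0$, using that $\varphi$ is nonincreasing. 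The whole problem then reduces to (i) bounding each scalar factor $\left|\E Z_i^{k_i}e^{-sa_iZ_i^2}\right|\lesssim s^{\gamma_i}$ as $s\to0^+$, and (ii) integrating $s^{k/2-1+\sum_i\gamma_i}\varphi^{p-r}(as)$ against the exponentially concentrating weight $\varphi^{p-r}$.

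The three index sets require three different scalar estimates. For $i\in J_1$ ($k_i=1$) the decisive new input is that $\alpha>4>3$ yields $\E|Z|^3<\infty$; together with the centering $\E Z=0$ and the expansion $\E Z_i e^{-sa_iZ_i^2}=-sa_i\,\E Z_i^3 e^{-sa_i\theta Z_i^2}$ for $\theta$ uniform on $[0,1]$, this gives the order-$s$ bound $\left|\E Z_ie^{-sa_iZ_i^2}\right|\le sa_i\,\E|Z|^3\lesssim s$, i.e.\ $\gamma_i=1$; this is the gain over the weaker $s^{(1-\delta)/2}$ used in Lemma~\ref{lem:non_identity_diagonal_self_normalization}. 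For $i\in J_2$ ($2\le k_i\le4<\alpha$) all moments through the fourth are finite, so $\left|\E Z_i^{k_i}e^{-sa_iZ_i^2}\right|\le\E|Z|^{k_i}=O(1)$, i.e.\ $\gamma_i=0$. For $i\in J_3$ ($k_i\ge5$) I fix $4<\beta<\min\{\alpha,5\}$, which guarantees simultaneously $\E|Z|^\beta<\infty$ (as $\beta<\alpha$) and $k_i-\beta>0$ (as $\beta<5\le k_i$); interpolating $|Z|^{k_i}=|Z|^\beta\cdot|Z|^{k_i-\beta}$ and using $\sup_{x\ge0}x^{(k_i-\beta)/2}e^{-sax}\lesssim s^{-(k_i-\beta)/2}$ gives $\left|\E Z_i^{k_i}e^{-sa_iZ_i^2}\right|\lesssim\E|Z|^\beta\,s^{(\beta-k_i)/2}$, i.e.\ $\gamma_i=(\beta-k_i)/2$.

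Summing the exponents, the integrand is controlled by a constant multiple of $s^{A-1}\varphi^{p-r}(as)$ with
\begin{align*}
    A=\frac{k}{2}+\sum_{i=1}^r\gamma_i=\frac{3}{2}|J_1|+\frac{1}{2}\sum_{i\in J_2}k_i+\frac{\beta}{2}|J_3|,
\end{align*}
which is exactly the negative of the claimed exponent and is strictly positive whenever $r\ge1$. Since $\alpha>4>2$ forces $\E Z^2<\infty$, Lemma~\ref{lem:exact_limiting_1-phi} gives $1-\varphi(s)\sim\E Z^2\,s$, so Lemma~\ref{lem:norming_constants} supplies norming constants $b_p\asymp p$ with $\varphi^{p-r}(t/b_p)\to e^{-t}$. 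Substituting $s=t/b_p$ on $[0,\varepsilon]$ (with $\varepsilon$ small enough that $\varphi(s)\le e^{-cs}$ there) yields
\begin{align*}
    \int_0^\varepsilon s^{A-1}\varphi^{p-r}(as)\,ds=b_p^{-A}\int_0^{\varepsilon b_p}t^{A-1}\varphi^{p-r}(at/b_p)\,dt\lesssim b_p^{-A}\int_0^\infty t^{A-1}e^{-ct}\,dt=O(p^{-A}),
\end{align*}
the $t$-integral converging at $t=0$ precisely because $A>0$. On the complementary range $[\varepsilon,\infty)$ one has $\varphi(as)\le\varphi(a\varepsilon)<1$, every scalar factor is $O(1)$, and the whole expectation is a priori finite (indeed bounded by $(\inf_i a_i)^{-k/2}$ via the power-mean inequality), so this part is exponentially small and absorbed into $O(p^{-A})$.

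The step I expect to be the main obstacle is pinning down the sharp, $\delta$-free exponents. First, the $J_3$ estimate has to reproduce the exponent $-\beta/2$ \emph{exactly}: this forces the specific interpolation split $|Z|^{k_i}=|Z|^\beta|Z|^{k_i-\beta}$ with $\beta$ confined to the narrow window $(4,\min\{\alpha,5\})$ so that the moment $\E|Z|^\beta$ exists and the exponential sup-bound applies at once, and the collapse of $k/2+\sum_i\gamma_i$ to $\frac32|J_1|+\frac12\sum_{i\in J_2}k_i+\frac\beta2|J_3|$ must be checked termwise. Second, because the $J_3$ factors contribute negative powers $s^{(\beta-k_i)/2}$, the integrand genuinely diverges as $s\to0^+$, so the interchange of limit and integral in the substitution must be justified by dominated convergence; here the strict positivity $A>0$ (integrability at $t=0$) and the exponential domination $\varphi^{p-r}(at/b_p)\lesssim e^{-ct}$ near the origin are both indispensable.
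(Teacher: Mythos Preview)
Your proposal is correct and follows essentially the same route as the paper's proof: the same Gamma-function representation, the same three-way split $J_1/J_2/J_3$ with the same scalar bounds (Taylor/mean-value for $J_1$ using $\E|Z|^3<\infty$, trivial moment bound for $J_2$, the interpolation $|Z|^{k_i}=|Z|^\beta|Z|^{k_i-\beta}$ for $J_3$), and the same substitution leading to $p^{-A}$ with $A=\tfrac32|J_1|+\tfrac12\sum_{i\in J_2}k_i+\tfrac\beta2|J_3|$. The only cosmetic difference is that the paper substitutes directly with $b_p=p$ (valid since $\E Z^2<\infty$ here) rather than invoking Lemma~\ref{lem:norming_constants}, and is slightly less explicit than you are about the tail piece $[\varepsilon,\infty)$ and the dominated-convergence justification.
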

    \begin{proof}
        We write
        \begin{align*}
            \E\frac{Z_1^{k_1}\cdots Z_r^{k_r}}{\left(a_1Z_1^2+\cdots+a_pZ_p^2\right)^{\frac{k}{2}}}=\frac{1}{\Gamma(\frac{k}{2})}\int_0^\infty s^{\frac{k}{2}-1}\prod_{i=1}^r\E Z_i^{k_i}e^{-sa_iZ_i^2}\prod_{i=r+1}^p\varphi(a_is)ds.
        \end{align*}
        For $i\in J_1$, we consider $f(s)=\E Z_ie^{-sa_iZ_i^2}$. Since that $f(0)=0$, by Taylor's expansion we have
        \begin{align*}
            |f(s)|=|f(0)+sf'(\xi)|\lesssim s.
        \end{align*}
        For $i\in J_2$, we have
        \begin{align*}
            \left|\E Z_i^{k_i}e^{-sa_iZ_i^2}\right|\leq\E|Z_i|^{k_i}\lesssim 1.
        \end{align*}
        For $i\in J_3$, since that $g(t)=t^{k_i-\beta}e^{-st^2}\lesssim s^{-\frac{k_i-\beta}{2}}$ for all $t>0$, so we have
        \begin{align*}
            \left|\E Z_i^{k_i}e^{-sa_iZ_i^2}\right|\leq\E|Z_i|^{\beta}\cdot|Z_i|^{k_i-\beta}e^{-sa_iZ_i^2}\lesssim s^{-\frac{k_i-\beta}{2}}.
        \end{align*}
        Combine all the cases above and $\varphi(a_is)\leq\varphi(as)$, we conclude that 
        \begin{align*}
            &\left|\E\frac{Z_1^{k_1}\cdots Z_r^{k_r}}{\left(a_1Z_1^2+\cdots+a_pZ_p^2\right)^{\frac{k}{2}}}\right|\\
            \lesssim&\int_0^\varepsilon s^{\frac{k}{2}-1}\cdot s^{|J_1|}\prod_{i\in J_3}s^{-\frac{k_i-\beta}{2}}\varphi^{p-r}(as)ds\\
            =&\int_0^\varepsilon s^{\frac{3}{2}|J_1|+\sum_{i\in J_2}\frac{k_i}{2}+\frac{\beta}{2}|J_3|-1}\varphi^{p-r}(as)ds\\
            =&\int_0^{\varepsilon ap} \left(\frac{t}{ap}\right)^{\frac{3}{2}|J_1|+\sum_{i\in J_2}\frac{k_i}{2}+\frac{\beta}{2}|J_3|-1}\varphi^{p-r}(\frac{t}{p})\frac{1}{ap}dt\\
            \lesssim &p^{-\frac{3}{2}|J_1|-\sum_{i\in J_2}\frac{k_i}{2}-\frac{\beta}{2}|J_3|}\int_0^\infty t^{\frac{3}{2}|J_1|+\sum_{i\in J_2}\frac{k_i}{2}+\frac{\beta}{2}|J_3|-1}e^{-t}dt\\
            \lesssim &p^{-\frac{3}{2}|J_1|-\sum_{i\in J_2}\frac{k_i}{2}-\frac{\beta}{2}|J_3|}.
        \end{align*}
    \end{proof}
  
    \section{Proofs of Self-normalization Propositions}
    \subsection{Proof of Proposition \ref{lem:even_number}}
    \begin{proof}
     With the Laplace transformation, the expectation can be written as
       \begin{align*}
           p^r\cdot\E Y_1^{2k_1}\cdots Y_r^{2k_r}=\frac{(-1)^kp^r}{\Gamma(k)}\int_0^\infty s^{k-1}\varphi^{(k_1)}(s)\cdots\varphi^{(k_r)}(s)\varphi^{p-r}(s)ds,
       \end{align*}
       where $k=k_1+\cdots+k_r$.
       For any fixed $\varepsilon>0$, we divide the integration into two parts as $\int_0^\varepsilon+\int_\varepsilon^\infty$.
    
 For $s \geq \varepsilon$, 
       \begin{align*}
           & p^r\int_\varepsilon^\infty s^{k-1}\varphi^{(k_1)}(s)\cdots\varphi^{(k_r)}(s)\varphi^{p-r}(s)ds\\
           \leq & p^r\varphi^{p-r}(\varepsilon)\int_\varepsilon^\infty s^{k-1}\varphi^{(k_1)}(s)\cdots\varphi^{(k_r)}(s)ds.
       \end{align*}
    Since $0 \leq \varphi(\varepsilon)<1$, we can get 
    \begin{align*}
        p^r\varphi^{p-r}(\varepsilon)\to 0.
    \end{align*}
    Noting
       \begin{align*}
        \varphi^{(k_1)}(s)\cdots\varphi^{(k_r)}(s)=(-1)^k \E Z_1^{2k_1}\cdots Z_r^{2k_r}e^{-s(Z_1^2+\cdots+Z_r^2)},
       \end{align*}
    we have
    \begin{align*}
    \left|\int_\varepsilon^\infty s^{k-1}\varphi^{(k_1)}(s)\cdots\varphi^{(k_r)}(s)ds \right|\leq & \int_0^\infty s^{k-1} \E Z_1^{2k_1}\cdots Z_r^{2k_r}e^{-s(Z_1^2+\cdots+Z_r^2)}ds\\
    =& \Gamma(k)\E \frac{Z_1^{2k_1}\cdots Z_r^{2k_r}}{(Z_1^2+\cdots+Z_r^2)^k}\leq \Gamma(k).
    \end{align*}   
Combing the two pieces, we can conclude
\begin{align*}
    \frac{(-1)^kp^r}{\Gamma(k)}\int_\epsilon^\infty s^{k-1}\varphi^{(k_1)}(s)\cdots\varphi^{(k_r)}(s)\varphi^{p-r}(s)ds \to 0.
\end{align*}

Next, we study 
\begin{align*}
    \frac{(-1)^kp^r}{\Gamma(k)}\int_0^\epsilon s^{k-1}\varphi^{(k_1)}(s)\cdots\varphi^{(k_r)}(s)\varphi^{p-r}(s)ds.
\end{align*}
and consider three cases. 
\begin{itemize}
    \item Case 1: $\alpha>2$. When $s\to 0^+$, by Lemma \ref{lem:exact_limiting}, 
\begin{align*}
    (-1)^k\varphi^{(k)}(s)\sim\begin{cases}
        \frac{\alpha}{2}\Gamma(k-\frac{\alpha}{2})s^{\frac{\alpha}{2}-k}l(\frac{1}{s}),&2k>\alpha,\\
        \E Z^{2k},&2k\leq\alpha,\E Z^{2k}<\infty,
    \end{cases}
\end{align*}
and by Lemma \ref{lem:exact_limiting_1-phi},
\begin{align*}
   1-\varphi(s) \sim  \E Z^{2}s.
\end{align*}

For $k_1=\cdots=k_r=1$, we can get 
\begin{align*}
    &  \frac{(-1)^kp^r}{\Gamma(k)}\int_0^\epsilon s^{k-1}\left[\varphi'(s)\right]^k\varphi^{p-r}(s)ds\\
    \sim & \frac{p^r}{\Gamma(k)}\int_0^\epsilon s^{k-1}(\E Z^2)^k (1-\E Z^{2}s)^{p-r}ds\\
    =&\frac{1}{\Gamma(k)}\int_0^{p\varepsilon  \E Z^2} t^{k-1}\left(1-\frac{t}{p}\right)^{p-r}dt,~\quad (t=p s \E Z^2)\\
    \to &\frac{1}{\Gamma(k)} \int_0^\infty t^{k-1}e^{-t}dt=1.
\end{align*}
 Otherwise, we have the bound
\begin{align*}
    \left|\varphi^{(k)}(s)\right|=&\left|\E Z^{2k}e^{-sZ^2}\right|\leq\E Z^{2(1+\delta)}\cdot Z^{2(k-1-\delta)} e^{-s Z^2}\\
    &\leq c_k \E Z^{2(1+\delta)} s^{1+\delta-k},
\end{align*}
where $0<\delta<\alpha-2$ and we used the fact $x^k e^{-x} \leq c_k=(k-1)^{k-1}e^{1-k}$. Denote $I_1=\{i:k_i=1\}$ and $I_2=\{i:k_i\geq 2\}$. If there exists some $k_i \geq 2$, then $|I_2|\geq 1$ and we can conclude
\begin{align*}
    & \left| \frac{(-1)^kp^r}{\Gamma(k)}\int_0^\epsilon s^{k-1} \varphi^{(k_1)}(s)\cdots\varphi^{(k_r)}(s)  \varphi^{p-r}(s)ds \right|\\
 \leq  & \frac{c p^r}{\Gamma(k)}\int_0^\epsilon s^{r-1+|I_2|\delta} (\E Z^{2})^{|I_1|}(\E Z^{2(1+\delta)})^{|I_2|} (1-\E Z^{2}s)^{p-r}ds\\
=&\frac{c (p \E Z^2)^{-|I_2|\delta}}{\Gamma(k)}\int_0^{p\epsilon  \E Z^2} t^{r-1+|I_2|\delta}\left(1-\frac{t}{p}\right)^{p-r}dt,~\quad (t=p s \E Z^2)\\
    \to & 0.
\end{align*}
    \item Case 2: $\alpha =2$ and $\E Z_1^2<\infty$. In this case, when $x\to\infty$, we must have $l(x)\to 0$ since 
    \begin{align*}
        \E Z^2=\int_0^\infty P(Z^2>t)dt\leq c_1+ c_2\int_1^\infty\frac{l(x)}{x}dx<\infty.
    \end{align*}
    Then we have
    \begin{align*}
        &(-1)^kp^r\int_0^\varepsilon s^{k-1}\varphi^{(k_1)}(s)\cdots\varphi^{(k_r)}(s)\varphi^{p-r}(s)ds\\
        \sim & p^r\int_0^\varepsilon s^{k-1}(\E Z^2)^{|I_1|}\prod_{i\in I_2}\left(s^{1-k_i}l(\frac{1}{s})\right)(1-\E Z^{2}s)^{p-r}ds\\
        = & p^r\int_0^\varepsilon s^{r-1}(\E Z^2)^{|I_1|}\left[l(\frac{1}{s})\right]^{|I_2|}(1-\E Z^{2}s)^{p-r}ds\\
        = & \int_0^{p\varepsilon\E Z^2}\left[l(\frac{p}{t\E Z^2})\right]^{|I_2|}t^{r-1}\left(1-\frac{t}{p}\right)^{p-r}dt\\
        \sim & \left[l(p)\right]^{|I_2|}\int_0^\infty t^{r-1}e^{-t}dt\\
        \to&\begin{cases}
            \Gamma(r),&|I_2|=0,\\
            0,&|I_2|\geq 1.
        \end{cases}
       \end{align*}
    \item Case 3: $\alpha =2$ and $\E Z_1^2=\infty$. When $s\to 0^+$, by Lemma \ref{lem:exact_limiting}, 
    \begin{align*}
        (-1)^k\varphi^{(k)}(s)\sim\begin{cases}
            \Gamma(k-1)s^{1-k}l(\frac{1}{s}),&k>1,\\
            \widetilde{l}(\frac{1}{s}),&k=1,\\
        \end{cases}\quad(s\to 0^+)
    \end{align*}
    and by Lemma \ref{lem:exact_limiting_1-phi},
    \begin{align*}
        1-\varphi(s) \sim s \widetilde{l}\left(\frac{1}{s}\right).
    \end{align*}
    $\widetilde{l}(x)$ is a slowly varying function. By Lemma \ref{lem:norming_constants}, there exists a positive sequence $(a_n)_{n=1}^\infty$ such that 
    \begin{align*}
        na_n^{-1}\widetilde{l}(a_n)\to 1.
    \end{align*}
    Take a substitution as $s=t/a_p$, we observe that
       \begin{align*}
           \varphi^p\left(\frac{t}{a_p}\right)=&\exp\left\{p\log\varphi(\frac{t}{a_p})\right\}\sim\exp\left\{p\left(\varphi(\frac{t}{a_p})-1\right)\right\}\\
           \sim& \exp\left\{-\frac{pt}{a_p} \widetilde{l}\left(\frac{a_p}{t}\right)\right\}=\exp\left\{-t \cdot \frac{p}{a_p} \widetilde{l}(a_p) \cdot \frac{\widetilde{l}\left(\frac{a_p}{t}\right)}{\widetilde{l}(a_p)}  \right\}\sim e^{-t}.
       \end{align*}
       And then
       \begin{align*}
        &(-1)^kp^r\int_0^\varepsilon s^{k-1}\varphi^{(k_1)}(s)\cdots\varphi^{(k_r)}(s)\varphi^{p-r}(s)ds\\
        =&(-1)^kp^r\int_0^{\varepsilon a_p}\left(\frac{t}{a_p}\right)^{k-1}\varphi^{(k_1)}(\frac{t}{a_p})\cdots\varphi^{(k_r)}(\frac{t}{a_p})\varphi^{p-r}(\frac{t}{a_p})\frac{1}{a_p}dt\\
        \sim&p^r\int_0^{\varepsilon a_p}\frac{t^{k-1}}{a_p^k}\left[\widetilde{l}(\frac{a_p}{t})\right]^{|I_1|}\prod_{i\in I_2}\frac{\Gamma(k_i-1)t^{1-k_i}l(\frac{a_p}{t})}{a_p^{1-k_i}}e^{-t}dt\\
        \sim&\frac{p^r[\widetilde{l}(a_p)]^{|I_1|}[l(a_p)]^{|I_2|}}{a_p^r}\int_0^\infty t^{r-1}e^{-t}dt\\
        =&\frac{p^r[\widetilde{l}(a_p)]^{r}}{a_p^r}\cdot\left[\frac{l(a_p)}{\widetilde{l}(a_p)}\right]^{|I_2|}\cdot\Gamma(r)\\
        \to&\begin{cases}
            \Gamma(r),&|I_2|=0,\\
            0,&|I_2|\geq 1,
        \end{cases}
       \end{align*}
       where the last convergence holds by $l(a_p)/\widetilde{l}(a_p)\to 0$ and more details can be found in Theorem A.7 of \citet{fuchs2002expectation}.
\end{itemize}

    \end{proof}

    \subsection{Proof of Proposition \ref{lem:odd_number_2}}
    \begin{proof}
        By Lemma \ref{lem:odd_number_1} \citet{gine1997student}, the conclusion holds for the case where at least one of $k_1,\cdots,k_r$ equals to one.  We extend the result to general case where at least one of $k_1,\cdots,k_r$ is odd by induction.
        
        Without loss of generality, we assume $k_1$ is odd and set $k_1=2m+1$. If $m=0$, the result is true by Lemma \ref{lem:odd_number_1}. Next, we assume the conclusion holds for $m$, i.e.,
        \begin{align*}
            \E Y_1^{2m+1} Y_2^{k_2} \cdots Y_r^{k_r}=o(p^{-r}),
        \end{align*}
        if $k_2+\cdots+k_r$ is odd. 
        
        Noting $\sum_{i=1}^p Y_i^2=1$, we have
                \begin{align*}
            & \E Y_1^{2m+1} Y_2^{k_2} \cdots Y_r^{k_r}=\E Y_1^{2m+1} Y_2^{k_2} \cdots Y_r^{k_r} \cdot \sum_{i=1}^pY_i^2\\
                    =&\E Y_1^{2m+3} Y_2^{k_2} \cdots Y_r^{k_r}+\sum_{i=2}^r\E Y_1^{2m+1}\cdots Y_i^{k_i+2}\cdots Y_r^{k_r}\\
                    &+\sum_{i=r+1}^p \E Y_1^{2m+1} Y_2^{k_2} \cdots Y_r^{k_r} Y_i^2\\
                    =&\E Y_1^{2(m+1)+1} Y_2^{k_2} \cdots Y_r^{k_r}+(r-1) o(p^{-r})+(p-r)\cdot o\left(p^{-(r+1)}\right)
                \end{align*}
        which yields 
        \begin{align*}
            \E Y_1^{2(m+1)+1} Y_2^{k_2} \cdots Y_r^{k_r}=o(p^{-r}).
        \end{align*}   
        That is the conclusion is true for $m+1$. The proof is completed. 
            \end{proof}

    \subsection{Proof of Proposition \ref{prop:even_number4}}
    \begin{proof}
        With the same routine in Proposition \ref{lem:even_number}, we only need to study
        \begin{align*}
            \frac{(-1)^kp^r}{\Gamma(k)}\int_0^\epsilon s^{k-1}\varphi^{(k_1)}(s)\cdots\varphi^{(k_r)}(s)\varphi^{p-r}(s)ds.
        \end{align*}
        When $s\to 0^+$, by Lemma \ref{lem:exact_limiting}, 
\begin{align*}
    (-1)^k\varphi^{(k)}(s)\sim\begin{cases}
        \frac{\alpha}{2}\Gamma(k-\frac{\alpha}{2})s^{\frac{\alpha}{2}-k}l(\frac{1}{s}),&2k>\alpha,\\
        \E Z^{2k},&2k\leq\alpha,\E Z^{2k}<\infty,
    \end{cases}
\end{align*}
and by Lemma \ref{lem:exact_limiting_1-phi},
\begin{align*}
   1-\varphi(s) \sim  \E Z^{2}s.
\end{align*}
If $|I_3|=0$, we have
\begin{align*}
    & (-1)^kp^{|I_1|+2|I_2|}\int_0^\epsilon s^{k-1}\left[\varphi'(s)\right]^{|I_1|}\left[\varphi''(s)\right]^{|I_2|}\varphi^{p-r}(s)ds\\
    \sim& p^{|I_1|+2|I_2|}\int_0^\epsilon s^{k-1}(\E Z^2)^{|I_1|}(\E Z^4)^{|I_2|}\left(1-\E Z^2s\right)^{p-r}ds\\
    =& (\E Z^2)^{-2|I_2|}(\E Z^4)^{|I_2|}\int_0^{\varepsilon p\E Z^2}t^{|I_1|+2|I_2|-1}\left(1-\frac{t}{p}\right)^{p-r}dt\\
    \to & (\E Z^2)^{-2|I_2|}(\E Z^4)^{|I_2|}\Gamma(k).
\end{align*}
Otherwise, we have the bound 
\begin{align*}
    \left|\varphi^{(k)}(s)\right|=\left|\E Z^{2k}e^{-sZ^2}\right|\leq\E Z^{\gamma}\cdot Z^{2k-\gamma} e^{-s Z^2}\leq c_k \E Z^{\gamma} s^{\frac{\gamma}{2}-k},
\end{align*}
where $k\geq 3$ and $\beta<\gamma<\min\{\alpha,6\}$. If $|I_3|\geq 1$, then we can conclude
\begin{align*}
    & \left|(-1)^kp^{|I_1|+2|I_2|+\frac{\beta}{2}|I_3|}\int_0^\epsilon s^{k-1}\left[\varphi'(s)\right]^{|I_1|}\left[\varphi''(s)\right]^{|I_2|}\prod_{i\in I_3}\varphi^{(k_i)}(s)\varphi^{p-r}(s)ds\right|\\
    \leq& cp^{|I_1|+2|I_2|+\frac{\beta}{2}|I_3|}\int_0^\epsilon s^{|I_1|+2|I_2|+\frac{\gamma}{2}|I_3|-1}\left(1-\E Z^2s\right)^{p-r}ds\\
    =&cp^{\frac{\gamma-\beta}{2}|I_3|}(\E Z^2)^{|I_1|}(\E Z^4)^{|I_2|}(\E Z^\gamma)^{|I_3|}\int_0^{p\epsilon  \E Z^2} t^{|I_1|+2|I_2|+\frac{\gamma}{2}|I_3|-1}\left(1-\frac{t}{p}\right)^{p-r}dt\\
    \to&0.
\end{align*}
    \end{proof}
    
\subsection{Proof of Proposition \ref{prop:alpha>4_general_number}}
\begin{proof}[Proof of Proposition \ref{prop:alpha>4_general_number}]
    We write 
    \begin{align*}
        \E Y_1^{k_1}\cdots Y_r^{k_r}=\frac{1}{\Gamma(\frac{k}{2})}\int_0^\infty s^{\frac{k}{2}-1}\prod_{i=1}^r\E Z_i^{k_i}e^{-sZ_i^2}\varphi^{p-r}(s)ds.
    \end{align*}
    For $j\in J_1$, we consider $f(s)=\E Z_ie^{-sZ_i^2}$. Since that $f(0)=0$, by Taylor's expansion we have
    \begin{align*}
        |f(s)|=|f(0)+sf'(\xi)|\lesssim s.
    \end{align*}
    For $j\in J_2$, we have
    \begin{align*}
        \left|\E Z_i^{k_i}e^{-sZ_i^2}\right|\leq\E|Z_i|^{k_i}\lesssim 1.
    \end{align*}
    For $j\in J_3$, since that $g(t)=t^{k_i-\beta}e^{-st^2}\lesssim s^{-\frac{k_i-\beta}{2}}$ for all $t>0$, so we have
    \begin{align*}
        \left|\E Z_i^{k_i}e^{-sZ_i^2}\right|\leq\E|Z_i|^{\beta}\cdot|Z_i|^{k_i-\beta}e^{-sZ_i^2}\lesssim s^{-\frac{k_i-\beta}{2}}.
    \end{align*}
    Combine all the cases above, we conclude that 
    \begin{align*}
        \left|\E Y_1^{k_1}\cdots Y_r^{k_r}\right|\lesssim&\int_0^\varepsilon s^{\frac{k}{2}-1}\cdot s^{|J_1|}\prod_{j\in J_3}s^{-\frac{k_i-\beta}{2}}\varphi^{p-r}(s)ds\\
        =&\int_0^\varepsilon s^{\frac{3}{2}|J_1|+\sum_{j\in J_2}\frac{k_j}{2}+\frac{\beta}{2}|J_3|-1}\varphi^{p-r}(s)ds\\
        =&\int_0^{\varepsilon p} \left(\frac{t}{p}\right)^{\frac{3}{2}|J_1|+\sum_{i\in J_2}\frac{k_i}{2}+\frac{\beta}{2}|J_3|-1}\varphi^{p-r}(\frac{t}{p})\frac{1}{p}dt\\
        \sim &p^{-\frac{3}{2}|J_1|-\sum_{i\in J_2}\frac{k_i}{2}-\frac{\beta}{2}|J_3|}\int_0^\infty t^{\frac{3}{2}|J_1|+\sum_{i\in J_2}\frac{k_i}{2}+\frac{\beta}{2}|J_3|-1}e^{-t}dt\\
        \lesssim &p^{-\frac{3}{2}|J_1|-\sum_{i\in J_2}\frac{k_i}{2}-\frac{\beta}{2}|J_3|}.
    \end{align*}
\end{proof}

 \section{Proofs of Quadratics Propositions}   
 \subsection{Proof of Proposition \ref{prop:moments_of_self-normalized_quadratic_forms}}
    \begin{proof}
By Propositions \ref{lem:even_number} and \ref{lem:odd_number_2}, 
        \begin{align*}
            \E\Y\trans\A\Y=&\sum_{i}a_{ii}\E Y_i^2+\sum_{i\not=j}a_{ij}\E Y_iY_j
            =(\E Y_1^2-\E Y_1 Y_2) \tr\A+ \E Y_1 Y_2 \one_p \trans \A \one_p\\
             =& \frac{\tr \A}{p}+o\left(\frac{1}{p}\right) \|\A\|.
        \end{align*}
Next, we consider the variance of the quadratic forms. Similar to the analysis of Hanson-Wright inequality, we study the diagonal and off-diagonal sums separately. For the diagonal sums,
\begin{align*}
\var\left( \sum_{i=1}^p a_{ii} Y_i^2 \right)=&\sum_{i=1}^p a^2_{ii} \var\left(Y_i^2\right)+\sum_{i \neq j} a_{ii} a_{jj} \cov(Y_i^2,Y_j^2)\\
=& \left(\var\left(Y_1^2\right)-\cov(Y_1^2,Y_2^2) \right)  \sum_{i=1}^p a^2_{ii}+\cov(Y_1^2,Y_2^2)\sum_{i,j} a_{ii} a_{jj}\\
=&\frac{o(1)}{p} \tr(\A \circ \A)+\frac{o(1)}{p^2}\tr^2(\A)=o(1) \|\A\|^2.
\end{align*}  
For the off-diagonal sums, denoting 
\begin{align*}
    \tilde{\A}=\A-\diag(\A)=(\tilde{a}_{ij})_{p \times p},
\end{align*}
we have
\begin{align*}
    &\var\left( \sum_{i \neq j} a_{ij} Y_i Y_j \right)= \cov \left( \sum_{i \neq j} a_{ij} Y_i Y_j,\sum_{k \neq l} a_{kl} Y_k Y_l \right)\\
    =&\sum_{i \neq j} a_{ij} \cov \left(Y_iY_j,\sum_{k \neq l} a_{kl} Y_k Y_l\right)\\
    =&\sum_{i \neq j} a_{ij}\left(2 a_{ij} \var(Y_1Y_2)+4 \cov(Y_1Y_2,Y_1Y_3) \sum_{k \neq i} \tilde{a}_{jk}\right)\\
    &+\sum_{i \neq j} a_{ij}\left(\cov(Y_1Y_2,Y_3Y_4) \sum_{k,l \notin \{i,j\}} \tilde{a}_{kl}     \right)\\
    =&   \cov(Y_1Y_2,Y_3Y_4) \left(\sum_{i,j} \tilde{a}_{ij} \right)^2 +4 \left(\cov(Y_1Y_2,Y_1Y_3)- \cov(Y_1Y_2,Y_3Y_4) \right) \\& \cdot \sum_{i,j,k}\tilde{a}_{ij}\tilde{a}_{jk}
    +2 \left(\var(Y_1Y_2)-\cov(Y_1Y_2,Y_1Y_3)-\cov(Y_1Y_2,Y_3Y_4)\right)\sum_{i,j} \tilde{a}^2_{ij}. 
\end{align*}
By Propositions \ref{lem:even_number} and \ref{lem:odd_number_2}, 
\begin{gather*}
    \var(Y_1Y_2)= \frac{1+o(1)}{p^2},~\cov(Y_1Y_2,Y_1Y_3)=o(p^{-3}),\quad \cov(Y_1Y_2,Y_3Y_4)=o(p^{-4}).
\end{gather*}   
Noting 
\begin{gather*}
    \sum_{i,j} \tilde{a}_{ij}=\one_p\trans \tilde{\A} \one_p \leq  p \|\tilde{\A}\| \leq 2p \|\A\|,\\
    \sum_{i,j,k}\tilde{a}_{ij}\tilde{a}_{jk}=\one_p \trans \tilde{\A}^2 \one_p \leq p  \|\tilde{\A}\|^2 \leq 4  p \|\A\|^2,\\
    \sum_{i,j} \tilde{a}^2_{ij}=\tr  \tilde{\A}^2 \leq p \|\tilde{\A}\|^2 \leq 2p \|\A\|^2,
\end{gather*}   
we can claim
\begin{align*}
    &\var\left( \sum_{i \neq j} a_{ij} Y_i Y_j \right) \leq \frac{5}{p} \|\A\|^2
\end{align*}
for large enough $p$.  

Thus, 
\begin{align*}
   \var\left(\Y\trans\A\Y \right)\leq 2 \var\left( \sum_{i=1}^p a_{ii} Y_i^2 \right)+2 \var\left( \sum_{i \neq j} a_{ij} Y_i Y_j \right)=o(1) \|\A\|^2,
\end{align*}
and 
\begin{align*}
    \E \left|\Y\trans\A\Y- \frac{\tr \A}{p}\right|^2=\var\left(\Y\trans\A\Y \right)+\left|   \E \Y\trans\A\Y- \frac{\tr \A}{p} \right|^2=o(1) \|\A\|^2.
\end{align*}

Noting $\|\Y\|_2=1$, we can get 
\begin{align*}
    \left|\Y\trans\A\Y- \frac{\tr \A}{p}\right| \leq \left|\Y\trans\A\Y\right|^2+\|\A\| \leq 2 \|\A\|.
\end{align*}
Therefore, for $k>2$,
 \begin{align*}
    \E \left|\Y\trans\A\Y- \frac{\tr \A}{p}\right|^k \leq (2 \|\A\|)^{k-2} \E \left|\Y\trans\A\Y- \frac{\tr \A}{p}\right|^2=o(1) \|\A\|^k.
 \end{align*}
\end{proof}    

\subsection{Proof of Proposition \ref{prop:alpha>4_moments_of_self-normalized_quadratic_forms}}
\begin{proof}
    By Proposition \ref{prop:even_number4} and \ref{prop:alpha>4_general_number}
    \begin{align*}
        \E\Y\trans\A\Y=& \sum_{i}a_{ii}\E Y_i^2+\sum_{i\not=j}a_{ij}\E Y_iY_j=(\E Y_1^2-\E Y_1 Y_2) \tr\A+ \E Y_1 Y_2 \one_p \trans \A \one_p\\
        =& \frac{\tr \A}{p}+O(\frac{1}{p^2}) \|\A\|.
    \end{align*}
    Next, we consider the covariance of the quadratic forms. By Proposition \ref{prop:even_number4} and \ref{prop:alpha>4_general_number},
    \begin{align*}
        &\E\left(\Y\trans\A\Y-\frac{1}{p}\tr\A\right)\left(\Y\trans\B\Y-\frac{1}{p}\tr\B\right)\\
        =&\E\left(\sum_{i}a_{ii}\left(Y_i^2-\frac{1}{p}\right)+\sum_{i\not=j}a_{ij}Y_iY_j\right)\left(\sum_{i}b_{ii}\left(Y_i^2-\frac{1}{p}\right)+\sum_{i\not=j}b_{ij}Y_iY_j\right)\\
        =&\sum_{i,j}a_{ii}b_{jj}\E\left(Y_i^2-\frac{1}{p}\right)\left(Y_j^2-\frac{1}{p}\right)+\sum_{i;j\not=k}a_{ii}b_{jk}\E\left(Y_i^2-\frac{1}{p}\right)Y_jY_k\\
        &+\sum_{i\not=j;k}a_{ij}b_{kk}\E Y_iY_j\left(Y_k^2-\frac{1}{p}\right)+\sum_{i\not=j,k\not=l}a_{ij}b_{kl}\E Y_iY_jY_kY_l.\\
        =&\sum_{i}a_{ii}b_{ii}\left(\E Y_i^4-\frac{1}{p^2}\right)+\sum_{i\not=j}a_{ii}b_{jj}\left(\E Y_i^2Y_j^2-\frac{1}{p^2}\right)+2\sum_{i\not=j}a_{ij}b_{ij}\E Y_i^2Y_j^2\\
        &+O(\frac{1}{p^2})\|\A\|\|\B\|\\
        =&(\tau-3)\frac{\tr(\A\circ\B)}{p^2}+2\frac{\tr(\A\B)}{p^2}+(1-\tau)\frac{\tr\A\tr\B}{p^3}+o(\frac{1}{p})\|\A\|\|\B\|.
    \end{align*}
        In the last equality, we use the identity
        \begin{align*}
            p(p-1)\E Y_1^2Y_2^2+p\E Y_1^4=1
        \end{align*}
        to obtain
        \begin{align*}
            \E Y_i^2Y_j^2-\frac{1}{p^2}=\frac{1-p^2\E Y_1^4}{p^2(p-1)}=\frac{1-\tau}{p^3}+o(\frac{1}{p^3}).
        \end{align*}
        Finally, we consider the concentration inequality. Similar to the proof of Proposition \ref{prop:moments_of_self-normalized_quadratic_forms}, we only need to show that 
        \begin{align*}
            \E\left|\Y\trans\A\Y-\frac{1}{p}\tr\A\right|^4=o(\frac{1}{p})\|\A\|^4.
        \end{align*}
        For the diagonal sums, it can be decomposed to the following summation
        \begin{align}
            \sum_{i_1,\cdots,i_r}^*(a_{i_1i_1}-\frac{\tr\A}{p})^{k_1}\cdots(a_{i_ri_r}-\frac{\tr\A}{p})^{k_r}\E Y_{i_1}^{2k_1}\cdots Y_{i_r}^{2k_r},\label{form:alpha>4_item_1}
        \end{align}
        where $k_1+\cdots+k_r=4$. By Proposition \ref{prop:even_number4}, we have
        \begin{align}\label{form:alpha>4_item_1_order}
            \E Y_1^{2k_1}\cdots Y_r^{2k_r}=\begin{cases}
                O(p^{-|I_1|-2|I_2|}),&|I_3|=0,\\
                o(p^{-|I_1|-2|I_2|}),&|I_3|\geq 1.
            \end{cases}
        \end{align}
        By the fact that $\sum_{i}(a_{ii}-\tr\A/p)=0$, it is not difficult to verify that
        \begin{align}
            \sum_{i_1,\cdots,i_r}^*(a_{i_1i_1}-\frac{\tr\A}{p})^{k_1}\cdots(a_{i_ri_r}-\frac{\tr\A}{p})^{k_r}=O(p^{r-\lceil\frac{|I_1|}{2}\rceil})\|\A\|^k.\label{form:1-power_numbers}
        \end{align}
        Combined with \eqref{form:1-power_numbers} and \eqref{form:alpha>4_item_1_order}, we have 
        \begin{align*}
            \eqref{form:alpha>4_item_1}=O(p^{r-\lceil\frac{|I_1|}{2}\rceil-|I_1|-2|I_2|-\frac{\beta}{2}|I_3|})\|\A\|^k.
        \end{align*}
        Since that $r=|I_1|+|I_2|+|I_3|$, the order of $p$ is 
        \begin{align*}
            r-\lceil\frac{|I_1|}{2}\rceil-|I_1|-2|I_2|-\frac{\beta}{2}|I_3|=-\lceil\frac{|I_1|}{2}\rceil-|I_2|-(\frac{\beta}{2}-1)|I_3|<-1.
        \end{align*}
        So we deduce that $\eqref{form:alpha>4_item_1}=o(p^{-1})$.
        
        For the off-diagonal sums, it can be decomposed to the following summation 
        \begin{align}
            \sum_{i_1,\cdots,i_r}^*a_{i_{j_1}i_{j_2}}a_{i_{j_3}i_{j_4}}a_{i_{j_5}i_{j_6}}a_{i_{j_{7}}i_{j_{8}}}\E Y_{i_1}^{k_1}\cdots Y_{i_r}^{k_r},\label{form:alpha>4_item_2}
        \end{align}
        where $k_1+\cdots+k_r=8$, $j_1,\cdots,j_{8}\in\{1,\cdots,r\}$ and $j_{2l-1}\not=j_{2l}$ for $l=1,\cdots,4$. We consider the following three cases.
        \begin{itemize}
            \item Case 1:  When $r\leq 3$, by Lemma \ref{lem:summation_of_sigma}, 
            \begin{align*}
                \left|\sum_{i_1,\cdots,i_r}^*a_{i_{j_1}i_{j_2}}a_{i_{j_3}i_{j_4}}a_{i_{j_5}i_{j_6}}a_{i_{j_{7}}i_{j_{8}}}\right|=O(p)\|\A\|^4.
            \end{align*}
            And by Proposition \ref{prop:alpha>4_general_number}, we have 
            \begin{align*}
                \E Y_{i_1}^{k_1}\cdots Y_{i_r}^{k_r}=O(p^{-4}),
            \end{align*}
            which leads to $\eqref{form:alpha>4_item_2}=O(p^{-3}).$
            \item Case 2:  When $r=4$, if at least one of $k_1,k_2,k_3,k_4$ is odd, by Proposition \ref{prop:alpha>4_general_number},
            \begin{align*}
                \E Y_{i_1}^{k_1}Y_{i_2}^{k_2}Y_{i_3}^{k_3}Y_{i_4}^{k_4}=O(p^{-5}).
            \end{align*}
            And by Lemma \ref{lem:summation_of_sigma},
            \begin{align*}
                \left|\sum_{i_1,i_2,i_3,i_4}^*a_{i_{j_1}i_{j_2}}a_{i_{j_3}i_{j_4}}a_{i_{j_5}i_{j_6}}a_{i_{j_{7}}i_{j_{8}}}\right|=O(p^{\frac{5}{2}})\|\A\|^4.
            \end{align*}
            Otherwise, $k_1,k_2,k_3,k_4$ are even and equal to $2$. In this situation, by Proposition \ref{prop:even_number4},
            \begin{align*}
                \E Y_{i_1}^{2}Y_{i_2}^{2}Y_{i_3}^{2}Y_{i_4}^{2}=O(p^{-4}).
            \end{align*}
            And we must have 
            \begin{align*}
                \left|\sum_{i_1,i_2,i_3,i_4}^*a_{i_{j_1}i_{j_2}}a_{i_{j_3}i_{j_4}}a_{i_{j_5}i_{j_6}}a_{i_{j_{7}}i_{j_{8}}}\right|=O(p^{2})\|\A\|^4.
            \end{align*}
            Both two cases lead to $\eqref{form:alpha>4_item_2}=O(p^{-2})\|\A\|^4.$
            \item Case 3:  When $5\leq r\leq 8$, by Proposition \ref{prop:alpha>4_general_number},
            \begin{align*}
                \E Y_{i_1}^{k_1}\cdots Y_{i_r}^{k_r}=O(p^{-6}).
            \end{align*}
            And by Lemma \ref{lem:summation_of_sigma},
            \begin{align*}
                \left|\sum_{i_1,i_2,i_3,i_4}^*a_{i_{j_1}i_{j_2}}a_{i_{j_3}i_{j_4}}a_{i_{j_5}i_{j_6}}a_{i_{j_{7}}i_{j_{8}}}\right|=O(p^{3})\|\A\|^4.
            \end{align*}
            We conclude that $\eqref{form:alpha>4_item_2}=O(p^{-3})\|\A\|^4$.
        \end{itemize}
        Collecting all these cases, we have 
        \begin{align*}
            \left|\sum_{i\not=j}a_{ij}Y_iY_j\right|^4=O(\frac{1}{p^2})\|\A\|^4.
        \end{align*}
\end{proof}

\subsection{Proof of Proposition \ref{prop:general_self_normalization}}
\begin{proof}
    When $\alpha\geq 2$, by identity
    \begin{align}
        \frac{1}{\Y\trans\bSig\Y}=1-\frac{\Y\trans\bSig\Y-1}{\Y\trans\bSig\Y},\label{form:identity_1}
    \end{align}
    we have 
    \begin{align*}
        \E\frac{\Y\trans\A\Y}{\Y\trans\bSig\Y}=\E \Y\trans\A\Y-\E\frac{\Y\trans\A\Y\left(\Y\trans\bSig\Y-1\right)}{\Y\trans\bSig\Y}.
    \end{align*}
    By Proposition \ref{prop:moments_of_self-normalized_quadratic_forms}, 
    \begin{align*}
        \E\Y\trans\A\Y=\frac{\tr\A}{p}+o(\frac{1}{p})\|\A\|,
    \end{align*}
    and 
    \begin{align*}
        \left|\E\frac{\Y\trans\A\Y\left(\Y\trans\bSig\Y-1\right)}{\Y\trans\bSig\Y}\right|\leq\frac{\|\A\|}{\lambda_{\min}(\bSig)}\E\left|\Y\trans\bSig\Y-1\right|=o(1)\frac{\|\A\|\|\bSig\|}{\lambda_{\min}(\bSig)},
    \end{align*}
    so we can obtain 
    \begin{align*}
        \E\frac{\Y\trans\A\Y}{\Y\trans\bSig\Y}=\frac{\tr\A}{p}+o(1)\frac{\|\A\|\|\bSig\|}{\lambda_{\min}(\bSig)}.
    \end{align*}
    For concentration inequality, by \eqref{form:identity_1} and Proposition \ref{prop:moments_of_self-normalized_quadratic_forms},
    \begin{align*}
        \E\left|\frac{\Y\trans\A\Y}{\Y\trans\bSig\Y}-\frac{\tr\A}{p}\right|^k=&\E\left|\frac{\Y\trans\A\Y}{\Y\trans\bSig\Y}-\Y\trans\A\Y+\Y\trans\A\Y-\frac{1}{p}\tr\A\right|^k\\
        \lesssim&\E\left|\frac{\Y\trans\A\Y\left(\Y\trans\bSig\Y-1\right)}{\Y\trans\bSig\Y}\right|^k+\E\left|\Y\trans\A\Y-\frac{1}{p}\tr\A\right|^k\\
        =&o(1)\frac{\|\A\|^k\|\bSig\|^k}{\lambda_{\min}^{k}(\bSig)}.
    \end{align*}
    \bigskip

    When $\alpha>4$, 
    we have 
    \begin{align*}
        \E\frac{\Y\trans\A\Y}{\Y\trans\bSig\Y}=&\E\Y\trans\A\Y-\E\Y\trans\A\Y\left(\Y\trans\bSig\Y-1\right)\\
        &+\E\Y\trans\A\Y\left(\Y\trans\bSig\Y-1\right)^2-\E\frac{\Y\trans\A\Y\left(\Y\trans\bSig\Y-1\right)^3}{\Y\trans\bSig\Y}.
    \end{align*}
    By Proposition \ref{prop:alpha>4_moments_of_self-normalized_quadratic_forms}, we obtain that 
    \begin{align*}
        &\E\Y\trans\A\Y=\frac{\tr\A}{p}+O(p^{-2})\|\A\|,\\
        &\E\Y\trans\A\Y\left(\Y\trans\bSig\Y-1\right)\\
        =&\E\left(\Y\trans\A\Y-\frac{1}{p}\tr\A\right)\left(\Y\trans\bSig\Y-1\right)+\frac{1}{p}\tr\A \E\left(\Y\trans\bSig\Y-1\right)\\
        =&\frac{\tau-3}{p^2}\tr(\A\circ\bSig)+\frac{2}{p^2}\tr(\A\bSig)+\frac{1-\tau}{p^2}\tr\A+o(p^{-1})\|\A\|\|\bSig\|,\\
        &\E\Y\trans\A\Y\left(\Y\trans\bSig\Y-1\right)^2\\
        =&\frac{1}{p}\tr\A \E\left(\Y\trans\bSig\Y-1\right)^2+\E\left(\Y\trans\A\Y-\frac{1}{p}\tr\A\right)\left(\Y\trans\bSig\Y-1\right)^2\\
        =&\tr\A\left[\frac{\tau-3}{p^3}\tr(\bSig\circ\bSig)+\frac{2}{p^3}\tr(\bSig^2)+\frac{1-\tau}{p^2}\right]+o(p^{-1})\|\A\|\|\bSig\|^2,\\
        &\left|\E\frac{\Y\trans\A\Y\left(\Y\trans\bSig\Y-1\right)^3}{\Y\trans\bSig\Y}\right| \leq \frac{\|\A\|}{\lambda_{\min}(\bSig)} \E \left|\Y\trans\bSig\Y-1 \right|^3=o(p^{-1})\frac{\|\A\| \|\bSig\|^3}{\lambda_{\min}(\bSig)}.
    \end{align*}
Combining all the pieces, we can conclude
    \begin{align*}
        \E\frac{\Y\trans\A\Y}{\Y\trans\bSig\Y}=&\frac{\tr\A}{p}+\frac{\tau-3}{p^3}\left( \tr \A \tr(\bSig \circ \bSig)-p \tr(\A \circ \bSig)  \right)\\
        &+\frac{2}{p^3}\left(\tr \A \tr(\bSig^2)-p \tr(\A \bSig) \right)+o(p^{-1})\frac{\|\A\|\|\bSig\|^3}{\lambda_{\min}(\bSig)}.
    \end{align*}
Furthermore,
    \begin{align*}
        &\E\left(\frac{\Y\trans\A\Y}{\Y\trans\bSig\Y}-\frac{\tr\A}{p}\right)\left(\frac{\Y\trans\A\Y}{\Y\trans\bSig\Y}-\frac{\tr\B}{p}\right)\\
        =&\E\left(\frac{\Y\trans\A\Y}{\Y\trans\bSig\Y}-\Y\trans\A\Y+\Y\trans\A\Y-\frac{1}{p}\tr\A \right)\\
        &\cdot \left(\frac{\Y\trans\B\Y}{\Y\trans\bSig\Y}-\Y\trans\B\Y+\Y\trans\B\Y-\frac{1}{p}\tr\B\right)\\
        =&\E \left(-\Y\trans\A\Y\left(\Y\trans\bSig\Y-1\right)+\frac{\Y\trans\A\Y\left(\Y\trans\bSig\Y-1\right)^2}{\Y\trans\bSig\Y}+\Y\trans\A\Y-\frac{1}{p}\tr\A\right)\\
        &\times\left(-\Y\trans\B\Y\left(\Y\trans\bSig\Y-1\right)+\frac{\Y\trans\B\Y\left(\Y\trans\bSig\Y-1\right)^2}{\Y\trans\bSig\Y}+\Y\trans\B\Y-\frac{1}{p}\tr\B\right)\\
        =&\E \Y\trans\A\Y\Y\trans\B\Y\left(\Y\trans\bSig\Y-1\right)^2-\E \Y\trans\A\Y\left(\Y\trans\bSig\Y-1\right)\left(\Y\trans\B\Y-\frac{1}{p}\tr\B\right)\\
        &-\E \Y\trans\B\Y\left(\Y\trans\bSig\Y-1\right)\left(\Y\trans\A\Y-\frac{1}{p}\tr\A\right)\\
        &+\E \left(\Y\trans\A\Y-\frac{1}{p}\tr\A\right)\left(\Y\trans\B\Y-\frac{1}{p}\tr\B\right)+o(p^{-1})\frac{\|\A\|\B\|\|\bSig\|^3}{\lambda_{\min}^2(\bSig)}\\
        =&\frac{\tr(\A) \tr(\B)}{p^2}\E \left(\Y\trans\bSig\Y-1\right)^2-\frac{\tr(\A)}{p} \E \left(\Y\trans\bSig\Y-1\right)\left(\Y\trans\B\Y-\frac{1}{p}\tr\B\right)\\
        &-\frac{\tr(\B)}{p} \E \left(\Y\trans\bSig\Y-1\right)\left(\Y\trans\A\Y-\frac{1}{p}\tr\A\right)\\
        &+\E \left(\Y\trans\A\Y-\frac{1}{p}\tr\A\right)\left(\Y\trans\B\Y-\frac{1}{p}\tr\B\right)+o(p^{-1})\frac{\|\A\|\B\|\|\bSig\|^3}{\lambda_{\min}^2(\bSig)}\\
        =&\frac{\tau_3}{p^2}\left(\frac{\tr(\A) \tr(\B)\tr(\bSig \circ \bSig)}{p^2}-\frac{\tr(\A)\tr(\bSig \circ \B)}{p}-\frac{\tr(\B)\tr(\bSig \circ \A)}{p}+\tr(\A \circ \B)   \right)\\
        &+\frac{2}{p^2}\left( \frac{\tr(\A) \tr(\B)}{p^2}\tr(\bSig^2)-\frac{\tr(\A)}{p}\tr(\bSig \B)-\frac{\tr(\B)}{p}\tr(\bSig \A)+\tr(\A\B) \right)\\
        &+o(p^{-1})\frac{\|\A\|\B\|\|\bSig\|^3}{\lambda_{\min}^2(\bSig)}.
     \end{align*}
 
     For concentration inequality, by \eqref{form:identity_1} and Proposition \ref{prop:alpha>4_moments_of_self-normalized_quadratic_forms},
    \begin{align*}
        \E\left|\frac{\Y\trans\A\Y}{\Y\trans\bSig\Y}-\frac{\tr\A}{p}\right|^k=&\E\left|\frac{\Y\trans\A\Y}{\Y\trans\bSig\Y}-\Y\trans\A\Y+\Y\trans\A\Y-\frac{1}{p}\tr\A\right|^k\\
        \lesssim&\E\left|\frac{\Y\trans\A\Y\left(\Y\trans\bSig\Y-1\right)}{\Y\trans\bSig\Y}\right|^k+\E\left|\Y\trans\A\Y-\frac{1}{p}\tr\A\right|^k\\
        =&\begin{cases}
            O(p^{-1})\frac{\|\A\|^k\|\bSig\|^k}{\lambda_{\min}^{k}(\bSig)},&k=2,\\
            o(p^{-1})\frac{\|\A\|^k\|\bSig\|^k}{\lambda_{\min}^{k}(\bSig)},&k\geq 3.
        \end{cases}
    \end{align*}
\end{proof}

\subsection{Proof of Proposition \ref{prop:cross_term_self_normalization}}
\begin{proof}[Proof of Proposition \ref{prop:cross_term_self_normalization}]
    For $\alpha\geq 2$, the case of $k=2$ has been studied in the proof of Proposition \ref{prop:moments_of_self-normalized_quadratic_forms}, so we need to prove the result for $k\geq 4$. Since all the terms can be represented as the following summation 
    \begin{align}
        \sum_{i_1,\cdots,i_r}^* a_{i_{j_1}i_{j_2}}\cdots a_{i_{j_{2k-1}}i_{j_{2k}}}\E Y_{i_1}^{k_1}\cdots Y_{i_r}^{k_r},\label{form:cross_item}
    \end{align}
    where $k_1+\cdots+k_r=2k$, $j_1,\cdots,j_{2k}\in\{1,\cdots,r\}$ and $j_{2l-1}\not=j_{2l}$ for $l=1,\cdots,k$, so we only need to analyze the convergence order of \eqref{form:cross_item}. When $r\leq 3$, by Proposition \ref{lem:even_number} and Proposition \ref{lem:odd_number_2}, we have 
    \begin{align*}
        \E Y_{i_1}^{k_1}\cdots Y_{i_r}^{k_r}=o(p^{-r}).
    \end{align*}
    And by Lemma \ref{lem:summation_of_sigma}, 
    \begin{align*}
        \left|\sum_{i_1,\cdots,i_r}^* a_{i_{j_1}i_{j_2}}\cdots a_{i_{j_{2k-1}}i_{j_{2k}}}\right|=O(p)\|\A\|^k,
    \end{align*}
    which leads to $\eqref{form:cross_item}=o(p^{-1})\|\A\|^k$.
    When $r\geq 4$, by Proposition \ref{lem:even_number} and Proposition \ref{lem:odd_number_2},
    \begin{align*}
        \E Y_{i_1}^{k_1}\cdots Y_{i_r}^{k_r}=O(p^{-r}).
    \end{align*}
    And by Lemma \ref{lem:summation_of_sigma},
    \begin{align*}
        \left|\sum_{i_1,\cdots,i_r}^* a_{i_{j_1}i_{j_2}}\cdots a_{i_{j_{2k-1}}i_{j_{2k}}}\right|=O(p^{\frac{r+\lfloor\frac{r}{4}\rfloor}{2}})\|\A\|^k.
    \end{align*}
    Therefore, we have $\eqref{form:cross_item}=o(p^{-3/2})\|\A\|^k$. To sum up, we conclude that 
    \begin{align*}
        \E \left| \sum_{i \neq j} a_{ij} Y_i Y_j\right|^k=\begin{cases}
            O(p^{-1})\|\A\|^k,&k=2,\\
            o(p^{-1})\|\A\|^k,&k\geq 3.
        \end{cases}
        \end{align*}

    For $\alpha>4$, the case of $k\leq 4$ has been analyzed in the proof of Proposition \ref{prop:alpha>4_moments_of_self-normalized_quadratic_forms}, so we need to study the item \eqref{form:cross_item} for $k\geq 6$. To make it more clear, we consider the following cases:
    \begin{itemize}
        \item Case 1:  When $r\leq 3$, by Lemma \ref{lem:summation_of_sigma}, 
            \begin{align*}
                \left|\sum_{i_1,\cdots,i_r}^*a_{i_{j_1}i_{j_2}}\cdots a_{i_{j_{2k-1}}i_{j_{2k}}}\right|=O(p)\|\A\|^k.
            \end{align*}
            And by Proposition \ref{prop:alpha>4_general_number}, we have 
            \begin{align*}
                \E Y_{i_1}^{k_1}\cdots Y_{i_r}^{k_r}=o(p^{-4})
            \end{align*}
            which leads to $\eqref{form:alpha>4_item_2}=o(p^{-3}).$
            \item Case 2:  When $4\leq r\leq5$, by Proposition \ref{prop:alpha>4_general_number},
            \begin{align*}
                \E Y_1^{k_1}\cdots Y_r^{k_r}=\begin{cases}
                    O(p^{-6}),&k=6,\\
                    o(p^{-6}),&k\geq8.
                \end{cases}
            \end{align*}
            And by Lemma \ref{lem:summation_of_sigma},
            \begin{align*}
                \left|\sum_{i_1,\cdots,i_r}^*a_{i_{j_1}i_{j_2}}\cdots a_{i_{j_{2k-1}}i_{j_{2k}}}\right|=O(p^{3})\|\A\|^k.
            \end{align*}
            So we have 
            \begin{align*}
                \eqref{form:cross_item}=\begin{cases}
                    O(p^{-3})\|\A\|^k,&k=6,\\
                    o(p^{-3})\|\A\|^k,&k\geq 8.
                \end{cases}
            \end{align*}
            \item Case 3:  When $6\leq r\leq k-1$ and $k\geq 8$, by Proposition \ref{prop:alpha>4_general_number},
            \begin{align*}
                \E Y_{i_1}^{k_1}\cdots Y_{i_r}^{k_r}=O(p^{-r-1}).
            \end{align*}
            And by Lemma \ref{lem:summation_of_sigma},
            \begin{align*}
                \left|\sum_{i_1,\cdots,i_r}^*a_{i_{j_1}i_{j_2}}\cdots a_{i_{j_{2k-1}}i_{j_{2k}}}\right|=O(p^{\frac{r+\lfloor\frac{r}{4}\rfloor}{2}})\|\A\|^k.
            \end{align*}
            We have $\eqref{form:alpha>4_item_2}=O(p^{-7/2})\|\A\|^k$.
            \item Case 4:  When $r=k$, If at least one of $k_1,\cdots,k_r$ is odd, by Proposition \ref{prop:alpha>4_general_number}, 
            \begin{align*}
                \E Y_{i_1}^{k_1}\cdots Y_{i_r}^{k_r}=O(p^{-r-1}).
            \end{align*} 
            And by Lemma \ref{lem:summation_of_sigma},
            \begin{align*}
                \left|\sum_{i_1,\cdots,i_r}^*a_{i_{j_1}i_{j_2}}\cdots a_{i_{j_{2k-1}}i_{j_{2k}}}\right|=O(p^{\frac{r+\lfloor\frac{r}{4}\rfloor}{2}})\|\A\|^k.
            \end{align*}
            Otherwise, $k_1=\cdots=k_r=2$ and in this situation, by Proposition \ref{prop:alpha>4_general_number},
            \begin{align*}
                \E Y_{i_1}^{2}\cdots Y_{i_r}^{2}=O(p^{-r}).
            \end{align*}
            And we must have 
            \begin{align*}
                \left|\sum_{i_1,\cdots,i_6}^*a_{i_{j_1}i_{j_2}}\cdots a_{i_{j_{11}}i_{j_{12}}}\right|=O(p^{\frac{r}{2}})\|\A\|^k.
            \end{align*}
            Both two cases lead to 
            \begin{align*}
            \eqref{form:alpha>4_item_2}=\begin{cases}
                O(p^{-3})\|\A\|^k,&k=6,\\
                O(p^{-4})\|\A\|^k,&k\geq 8.
            \end{cases}
            \end{align*}
            \item Case 5:  When $k+1\leq r\leq 2k$, by Proposition \ref{prop:alpha>4_general_number},
            \begin{align*}
                \E Y_{i_1}^{2}\cdots Y_{i_r}^{2}=O(p^{-r}).
            \end{align*}
            And by Lemma \ref{lem:summation_of_sigma},
            \begin{align*}
                \left|\sum_{i_1,\cdots,i_r}^*a_{i_{j_1}i_{j_2}}\cdots a_{i_{j_{2k-1}}i_{j_{2k}}}\right|=O(p^{\frac{r+\lfloor\frac{r}{4}\rfloor}{2}})\|\A\|^k.
            \end{align*}
            Therefore, we have $\eqref{form:cross_item}=O(p^{-4})$.
    \end{itemize}
    Collecting all the above cases, we conclude that 
    \begin{align*}
        \E \left| \sum_{i \neq j} a_{ij} Y_i Y_j\right|^k=\begin{cases}
            O(p^{-\frac{k}{2}})\|\A\|^k,&2\leq k\leq 6,\\
            o(p^{-3})\|\A\|^k,&k\geq 7.
        \end{cases}
        \end{align*}

\end{proof}

\section{Proofs of  LSD}
\subsection{Proof of Theorem \ref{thm:lsd1}}
\begin{proof}
We study the diagonal and off-diagonal elements separately. In special, we have
\begin{align*}
    \frac{1}{p}\left\|p\cdot\E\frac{\Y\Y\trans}{\Y\trans\bSig\Y}-\bI_p\right\|_F^2=\frac{1}{p}\sum_{i=1}^p\left(p\cdot\E\frac{Y_i^2}{\Y\trans\bSig\Y}-1\right)^2+\frac{1}{p}\sum_{i\not=j}\left(p\cdot\E\frac{Y_iY_j}{\Y\trans\bSig\Y}\right)^2.
\end{align*}
It remains to show
    \begin{align}
        &p\cdot\E\frac{Y_i^2}{\Y\trans\bSig\Y}-1\to 0,\quad\text{uniformly in $1\leq i\leq p$,}\label{form:lim1}\\
        &p^{\frac{3}{2}}\cdot\left|\E\frac{Y_iY_j}{\Y\trans\bSig\Y}\right|\to 0,\quad\text{uniformly in $1\leq i,j\leq p$.}\label{form:lim2}
    \end{align}    
\begin{itemize}
    \item \textbf{Diagonal elements:}  As for \eqref{form:lim1}, we have
    \begin{align*}
        \E\frac{Y_i^2}{\Y\trans\bSig\Y}=2\E Y_i^2-\E Y_i^2\Y\trans\bSig\Y+\E\frac{Y_i^2\left(\Y\trans\bSig\Y-1\right)^2}{\Y\trans\bSig\Y}:=\frac{2}{p}-\varepsilon_1+\varepsilon_2.
    \end{align*}
By Lemmas \ref{lem:even_number} and \ref{lem:odd_number_1}, 
    \begin{align*}
        \varepsilon_1=&\sum_{j,k}\sigma_{jk}\E Y_i^2Y_jY_k=\sigma_{ii}\E Y_i^4+\sum_{j\not=i}\sigma_{jj}\E Y_i^2Y_j^2+\sum_{j,k\not=i}^*\sigma_{jk}\E Y_i^2Y_jY_k\\
        =&\frac{1}{p}+o(\frac{1}{p}).
    \end{align*}
Moreover
    \begin{align*}
        \varepsilon_2 \leq &\frac{1}{\lambda_{min}(\bSig)}\E Y_i^2\left(\Y\trans\bSig\Y-1\right)^2\\
        \leq &\frac{2}{\lambda_{min}(\bSig)}\E Y_i^2\left(\sum_{j}(\sigma_{jj}-1)Y_j^2\right)^2+\frac{2}{\lambda_{min}(\bSig)}\E Y_i^2\left(\sum_{j,k}^*\sigma_{jk}Y_jY_k\right)^2.
    \end{align*}
    For the first term, by Lemma \ref{lem:even_number} 
    \begin{align*}
        &\E Y_i^2\left(\sum_{j}(\sigma_{jj}-1)Y_j^2\right)^2\\
        =&\E Y_i^2\left(\sum_{j}(\sigma_{jj}-1)^2Y_j^4+\sum_{j,k}^*(\sigma_{jj}-1)(\sigma_{kk}-1)Y_j^2Y_k^2\right)\\
        =&(\sigma_{ii}-1)^2\E Y_i^6+\sum_{j\not=i}(\sigma_{jj}-1)\E Y_i^2Y_j^2\\
        &+2\sum_{j\not=i}(\sigma_{ii}-1)(\sigma_{jj}-1)\E Y_i^4Y_j^2\\
        &+\sum_{j,k\not=i}^*(\sigma_{jj}-1)(\sigma_{kk}-1)\E Y_i^2Y_j^2Y_k^2\\
        =&o(p^{-1}).
    \end{align*}
    For the second term, by Holder's inequality, Lemma \ref{lem:even_number} and Proposition \ref{prop:cross_term_self_normalization},
    \begin{align}
        \E Y_i^2\left(\sum_{j,k}^*\sigma_{jk}Y_jY_k\right)^2\leq\sqrt{\E Y_1^4}\sqrt{\E\left(\sum_{i,j}^*\sigma_{ij}Y_iY_j\right)^4}=o(p^{-1}).\label{form:cross_fourth_moment}
    \end{align}
Combine all the pieces, uniformly we have
\begin{align*}
    p\cdot\E\frac{Y_i^2}{\Y\trans\bSig\Y}-1\to 0.
\end{align*}
    \item \textbf{Off-diagonal elements:}  As for \eqref{form:lim2}, 
    we consider the decomposition 
    \begin{align*}
        \E\frac{Z_iZ_j}{\Z\trans\bSig\Z}=&\E\frac{Z_iZ_j}{\sum_{k=1}^p\sigma_{kk}Z_k^2}-\E\frac{Z_iZ_j\sum_{k,l}^*\sigma_{kl}Z_kZ_l}{\left(\sum_{k=1}^p\sigma_{kk}Z_k^2\right)^2}\\
        &+\E\frac{Z_iZ_j\left(\sum_{k,l}^*\sigma_{kl}Z_kZ_l\right)^2}{\left(\sum_{k=1}^p\sigma_{kk}Z_k^2\right)^2\Z\trans\bSig\Z}.
    \end{align*}
    To deal with the first two items, we need Lemma \ref{lem:non_identity_diagonal_self_normalization} which extend Proposition \ref{lem:odd_number_2}. By Lemma \ref{lem:non_identity_diagonal_self_normalization}, 
    \begin{align*}
        &\left|\E\frac{Z_1Z_2}{\sum_{k=1}^p\sigma_{kk}Z_k^2}\right|=o(p^{-2+\delta}),\\
        &\left|\E\frac{Z_1^2Z_2Z_3}{\left(\sum_{k=1}^p\sigma_{kk}Z_k^2\right)^2}\right|=o(p^{-3+\delta}),\\
        &\left|\E\frac{Z_1Z_2Z_3Z_4}{\left(\sum_{k=1}^p\sigma_{kk}Z_k^2\right)^2}\right|=o(p^{-4+\delta}),
    \end{align*}
    which yields
    \begin{align*}
        \left|\E\frac{Z_iZ_j\sum_{k,l}^*\sigma_{kl}Z_kZ_l}{\left(\sum_{k=1}^p\sigma_{kk}Z_k^2\right)^2}\right|=o(p^{-2+\delta}).
    \end{align*}
For the third term, by Holder's inequality, Lemma \ref{lem:even_number} and Proposition \ref{prop:cross_term_self_normalization}, 
    \begin{align*}
        &\left|\E\frac{Z_iZ_j\left(\sum_{k,l}^*\sigma_{kl}Z_kZ_l\right)^2}{\left(\sigma_{11}Z_1^2+\cdots+\sigma_{pp}Z_p^2\right)^2\Z\trans\bSig\Z}\right|=\left|\E\frac{Y_iY_j\left(\sum_{k,l}^*\sigma_{kl}Y_kY_l\right)^2}{\left(\Y \trans \diag(\bSig) \Y \right)^2\Y \trans\bSig\Y }\right|\\
        \leq & \frac{1}{\lambda_{min}(\bSig)^3}\sqrt{\E Y_i^2Y_j^2}\sqrt{\E\left(\sum_{i,j}^*\sigma_{ij}Y_iY_j\right)^4}=o(p^{-\frac{3}{2}}).
    \end{align*}
    Combine all the pieces, uniformly we have
    \begin{align*}
        p^{\frac{3}{2}}\cdot\left|\E\frac{Y_iY_j}{\Y\trans\bSig\Y}\right|\to 0.
    \end{align*}
\end{itemize}
The proof is completed.
\end{proof}

\section{Proofs of CLT}
\subsection{Proof of Theorem \ref{thm:clt1}}
\begin{proof}
It remains to show that 
    \begin{align}
        &p^2\E\frac{Y_i^2}{\Y\trans\bSig\Y}-p+(\tau-1)\sigma_{ii}-\frac{2\tr(\bSig^2)+(\tau-3)\tr(\bSig\circ\bSig)}{p}\to 0,\label{form:clt_item_1}\\
        &p^{\frac{5}{2}}\E\frac{Y_iY_j}{\Y\trans\bSig\Y}+2p^{\frac{1}{2}}\sigma_{ij}\to 0.\label{form:clt_item_2}
    \end{align}

    As for \eqref{form:clt_item_1}, we consider
    \begin{align*}
        \E\frac{Y_i^2}{\Y\trans\bSig\Y}=&\E Y_i^2+\E Y_i^2(1-\Y\trans\bSig\Y)+\E Y_i^2(1-\Y\trans\bSig\Y)^2\\
        &+\E Y_i^2(1-\Y\trans\bSig\Y)^3+\E\frac{Y_i^2(1-\Y\trans\bSig\Y)^4}{\Y\trans\bSig\Y}.
    \end{align*}
    First, by Proposition \ref{prop:even_number4} and Proposition \ref{prop:alpha>4_general_number}, we have
    \begin{align*}
        &\E Y_i^2\Y\trans\bSig\Y\\
        =&\sum_{j,k}\sigma_{jk}\E Y_i^2Y_jY_k =\sigma_{ii}\E Y_i^4+\sum_{j\not=i}\sigma_{jj}\E Y_i^2Y_j^2+\sum_{j,k\not=i}^*\sigma_{jk}\E Y_i^2Y_jY_k\\
            =&\sigma_{ii}\E Y_1^4+(p-\sigma_{ii})\frac{1-p\E Y_1^4}{p(p-1)}+o(p^{-3})=\frac{1}{p}+\frac{\tau-1}{p^2}\sigma_{ii}-\frac{\tau-1}{p^2}+o(p^{-2}).
    \end{align*}
    Next, we consider
    \begin{align*}
        &\E Y_i^2\left(1-\Y\trans\bSig\Y\right)^2\\
        =&\E Y_i^2\left(\sum_{j}(\sigma_{jj}-1)Y_j^2+\sum_{k,l}^*\sigma_{kl}Y_kY_l\right)^2\\
        =&\E Y_i^2\left(\sum_{j}(\sigma_{jj}-1)Y_j^2\right)^2+2\E Y_i^2\left(\sum_{j}(\sigma_{jj}-1)Y_j^2\right)\left(\sum_{k,l}^*\sigma_{kl}Y_kY_l\right)\\
        &+\E Y_i^2\left(\sum_{k,l}^*\sigma_{kl}Y_kY_l\right)^2.
    \end{align*}
    For the first term,
    \begin{align*}
        &\E Y_i^2\left(\sum_{j}(\sigma_{jj}-1)Y_j^2\right)^2\\
        =&\sum_{j}(\sigma_{jj}-1)^2\E Y_i^2Y_j^4+\sum_{j,k}^*(\sigma_{jj}-1)(\sigma_{kk}-1)\E Y_i^2Y_j^2Y_k^2\\
        =&(\sigma_{ii}-1)\E Y_1^6+\sum_{j\not=i}(\sigma_{jj}-1)^2\E Y_1^2Y_2^4+2\sum_{j\not=i}(\sigma_{ii}-1)(\sigma_{jj}-1)\E Y_1^4Y_2^2\\
        &+\sum_{j,k\not=i}^*(\sigma_{jj}-1)(\sigma_{kk}-1)\E Y_1^2Y_2^2Y_3^2+o(p^{-2}).
    \end{align*}
    Since that
    \begin{align*}
        \sum_{j}(\sigma_{jj}-1)^2=&\tr(\bSig\circ\bSig)-p,\\
        \sum_{j,k}^*(\sigma_{jj}-1)(\sigma_{kk}-1)=&-\sum_{j}(\sigma_{jj}-1)^2=-\tr(\bSig\circ\bSig)+p,
    \end{align*}
    by Proposition \ref{prop:even_number4}, we have 
    \begin{align*}
        \E Y_i^2\left(\sum_{j}(\sigma_{jj}-1)Y_j^2\right)^2=&\frac{\left(\tr(\bSig\circ\bSig)-p\right)(\tau-1)}{p^3}\\
        =&\frac{\tr(\bSig\circ\bSig)(\tau-1)}{p^3}-\frac{\tau-1}{p^2}+o(p^{-2}).
    \end{align*}
    For the second term, by Proposition \ref{prop:alpha>4_general_number},
    \begin{align*}
        &\E Y_i^2\left(\sum_{j}(\sigma_{jj}-1)Y_j^2\right)\left(\sum_{k,l}^*\sigma_{kl}Y_kY_l\right)\\
        =&\sum_{j}\sum_{k,l}^*(\sigma_{jj}-1)\sigma_{kl}\E Y_i^2Y_j^2Y_kY_l\\
        =&\sum_{j,k}^*(\sigma_{jj}-1)\sigma_{jk}\E Y_i^2Y_j^3Y_k+\sum_{j,k,l}^*(\sigma_{jj}-1)\sigma_{kl}\E Y_i^2Y_j^2Y_kY_l\\
        =&O(p^{-\frac{5}{2}}).
    \end{align*}
    For the third term, by Proposition \ref{prop:even_number4} and Proposition \ref{prop:alpha>4_general_number},
    \begin{align*}
        &\E Y_i^2\left(\sum_{k,l}^*\sigma_{kl}Y_kY_l\right)^2\\
        =&\sum_{k\not=l,s\not=t}\sigma_{kl}\sigma_{st}\E Y_i^2Y_kY_lY_sY_t\\
        =&2\sum_{k,l}^*\sigma_{kl}^2\E Y_i^2Y_k^2Y_l^2+4\sum_{k,l,s}^*\sigma_{kl}\sigma_{ks}\E Y_i^2Y_k^2Y_lY_k+\sum_{k,l,s,t}^*\sigma_{kl}\sigma_{st}\E Y_i^2Y_kY_lY_sY_t\\
        =&4\sum_{k\not=i}\sigma_{ik}^2\E Y_1^4Y_2^2+2\sum_{k,l\not=i}^*\sigma_{kl}^2\E Y_1^2Y_2^2Y_3^2+O(p^{-3})\\
        =&2\frac{\tr(\bSig^2)-\tr(\bSig\circ\bSig)}{p^3}+o(p^{-2}).
    \end{align*}
    So we have concluded that 
    \begin{align}
        \E Y_i^2(1-\Y\trans\bSig\Y)^2=\frac{2\tr(\bSig^2)}{p^3}+\frac{\tr(\bSig\circ\bSig)(\tau-3)}{p^3}-\frac{\tau-1}{p^2}+o(p^{-2}).
    \end{align}
    Then, we consider 
    \begin{align}
        &\E\frac{Y_i^2(1-\Y\trans\bSig\Y)^4}{\Y\trans\bSig\Y}\lesssim  \E Y_i^2(1-\Y\trans\bSig\Y)^4 \nonumber \\
        \lesssim &\E Y_i^2\left(\sum_{j}(\sigma_{jj}-1)Y_j^2\right)^4+\E Y_i^2\left(\sum_{j,k}^*\sigma_{jk}Y_jY_k\right)^4.\label{form:I_5_control}
    \end{align}
    For the first term, we expand the fourth power and it consists of the summation like 
    \begin{align}
        \sum_{i_1,\cdots,i_r}^*(\sigma_{i_1i_1}-1)^{k_1}\cdots(\sigma_{i_ri_r}-1)^{k_r}\E Y_i^2Y_{i_1}^{2k_1}\cdots Y_{i_r}^{2k_r},\label{form:Y_i^2_sigma_summation}
    \end{align}
    where $k_1+\cdots+k_r=4$. On the one hand, if all the indexes $\{i_1,\cdots,i_r\}$ run through $\{1,2,\cdots,p\}\backslash\{i\}$, then by Proposition \ref{prop:alpha>4_general_number},
    \begin{align}
        \E Y_i^2Y_{i_1}^{k_1}\cdots Y_{i_r}^{k_r}=O(p^{-1-|I_1|-2|I_2|-\frac{\beta}{2}|I_3|}).\label{form:Y_i^2}
    \end{align}
    Combined with \eqref{form:1-power_numbers} and \eqref{form:Y_i^2}, we have 
    \begin{align*}
        \eqref{form:Y_i^2_sigma_summation}=O(p^{r-\lceil\frac{|I_1|}{2}\rceil-1-|I_1|-2|I_2|-\frac{\beta}{2}|I_3|}).
    \end{align*}
    Since that $r=|I_1|+|I_2|+|I_3|$, the order of $p$ is 
    \begin{align}
        & r-\lceil\frac{|I_1|}{2}\rceil-1-|I_1|-2|I_2|-\frac{\beta}{2}|I_3| \nonumber \\
        =&-\lceil\frac{|I_1|}{2}\rceil-1-|I_2|-(\frac{\beta}{2}-1)|I_3|<-2.\label{form:result_case1}
    \end{align}
    On the other hand, if one of the indexes equals to $i$ and others run through $\{1,2,\cdots,p\}\backslash\{i\}$, it suffices to consider 
    \begin{align}
        (\sigma_{ii}-1)^{k_r}\sum_{i_1,\cdots,i_{r-1}}^*(\sigma_{i_1i_1}-1)^{k_1}\cdots(\sigma_{i_{r-1}i_{r-1}}-1)^{k_{r-1}}\E Y_i^{2+2k_r}Y_{i_1}^{2k_1}\cdots Y_{i_r}^{2k_{r-1}},\label{form:Y_i^2_sigma_summation_r-1}
    \end{align}
    where $k_1+\cdots+k_r=4$. We denote $I_1=\{1\leq j\leq r-1:k_j=1\}$, $I_2=\{1\leq j\leq r-1:k_j=2\}$, $I_3=\{1\leq j\leq r-1:k_j\geq 3\}$,
    and by Proposition \ref{prop:even_number4}, we have 
    \begin{align}
        \E Y_i^{2+2k_r}Y_{i_1}^{2k_1}\cdots Y_{i_r}^{2k_{r-1}}=O(p^{-|I_1|-2|I_2|-\frac{\beta}{2}|I_3|-2\one(k_r=1)-\frac{\beta}{2}\one(k_r\geq 2)}).\label{form:Y_i^2_r-1}
    \end{align}
    Combined with \eqref{form:1-power_numbers} and \eqref{form:Y_i^2_r-1}, we have 
    \begin{align}
        \eqref{form:Y_i^2_sigma_summation_r-1}=O(p^{r-1-\lceil\frac{|I_1|}{2}\rceil-|I_1|-2|I_2|-\frac{\beta}{2}|I_3|-2\one(k_r=1)-\frac{\beta}{2}\one(k_r\geq 2)}).
    \end{align}
    Since that $r-1=|I_1|+|I_2|+|I_3|$, the order of $p$ is 
    \begin{align}
        -\lceil\frac{|I_1|}{2}\rceil-|I_2|-(\frac{\beta}{2}-1)|I_3|-2\one(k_r=1)-\frac{\beta}{2}\one(k_r\geq 2)<-2.\label{form:result_case2}
    \end{align}
    Together with \eqref{form:result_case1} and \eqref{form:result_case2}, we have $\eqref{form:Y_i^2_sigma_summation}=o(p^{-2})$.
    For the second term, by Holder's inequality, Proposition \ref{prop:even_number4} and Proposition \ref{prop:cross_term_self_normalization}, we have
    \begin{align*}
        \E Y_i^2\left(\sum_{j,k}^*\sigma_{jk}Y_jY_k\right)^4\leq\sqrt{\E Y_i^4}\sqrt{\left(\sum_{j,k}^*\sigma_{jk}Y_jY_k\right)^8}=o(p^{-\frac{5}{2}}).
    \end{align*}
    So the right side of \eqref{form:I_5_control} can be bounded in $o(p^{-2})$ and we obtain that $|I_5|=o(p^{-2})$.
    Finally, $I_4$ can be controlled by $I_3$ and $I_5$ with the assistance of Holder's inequality,
    \begin{align*}
        \left|\E Y_i^2(1-\Y\trans\bSig\Y)^3\right|\leq\sqrt{\E Y_i^2(1-\Y\trans\bSig\Y)^2}\sqrt{\E Y_i^2(1-\Y\trans\bSig\Y)^4}=o(p^{-2}).
    \end{align*}
    To sum up, we have 
    \begin{align*}
        p\cdot\E\frac{Y_i^2}{\Y\trans\bSig\Y}=1-\frac{\tau-1}{p}\sigma_{ii}+\frac{2\tr(\bSig^2)+(\tau-3)\tr(\bSig\circ\bSig)}{p^2}+o(p^{-1}).
    \end{align*}

    As for \eqref{form:clt_item_2}, 
    we decompose the quadratic form into diagonal part and non-diagonal part,
    \begin{align*}
        &\E\frac{Z_iZ_j}{\Z\trans\bSig\Z}\\
        =&\E\frac{Z_iZ_j}{\sum_{k=1}^p\sigma_{kk}Z_k^2}-\E\frac{Z_iZ_j\left(\sum_{k\not=l}\sigma_{kl}Z_kZ_l\right)}{\left(\sum_{k=1}^p\sigma_{kk}Z_k^2\right)^2}+\E\frac{Z_iZ_j\left(\sum_{k\not=l}\sigma_{kl}Z_kZ_l\right)^2}{\left(\sum_{k=1}^p\sigma_{kk}Z_k^2\right)^3}\\
        &-\E\frac{Z_iZ_j\left(\sum_{k\not=l}\sigma_{kl}Z_kZ_l\right)^3}{\left(\sum_{k=1}^p\sigma_{kk}Z_k^2\right)^4}+\E\frac{Z_iZ_j\left(\sum_{k\not=l}\sigma_{kl}Z_kZ_l\right)^4}{\left(\sum_{k=1}^p\sigma_{kk}Z_k^2\right)^4\Z\trans\bSig\Z}\\
        :=&T_1-T_2+T_3-T_4+T_5.
    \end{align*}
    As what we have done in the proof of LSD, here we need to extend Proposition \ref{prop:alpha>4_general_number} to a more general form in Lemma \ref{lem:alpha_4_diagonal}.
    By Lemma \ref{lem:alpha_4_diagonal}, we have $|T_1|=O(p^{-3})$. As for $T_2$,
    \begin{align*}
        T_2=&2\sigma_{ij}\E\frac{Z_i^2Z_j^2}{\left(\sum_{k=1}^p\sigma_{kk}Z_k^2\right)^2}+4\sum_{k\not=i,j}\sigma_{ik}\E\frac{Z_i^2Z_jZ_k}{\left(\sum_{k=1}^p\sigma_{kk}Z_k^2\right)^2}\\&+\sum_{k,l\not=i,j}^*\sigma_{kl}\E\frac{Z_iZ_jZ_kZ_l}{\left(\sum_{k=1}^p\sigma_{kk}Z_k^2\right)^2}.
    \end{align*}
    The first term is the dominant term, that is 
    \begin{align*}
        &\E\frac{Z_i^2Z_j^2}{\left(\sum_{k=1}^p\sigma_{kk}Z_k^2\right)^2}
        =\E\frac{Y_i^2Y_j^2}{\left(\sum_{k=1}^p\sigma_{kk}Y_k^2\right)^2}\\
        =&\E Y_i^2Y_j^2\left[2-\left(\sum_{k=1}^p\sigma_{kk}Y_k^2\right)^2+\frac{\left(1-\left(\sum_{k=1}^p\sigma_{kk}Y_k^2\right)^2\right)^2}{\left(\sum_{k=1}^p\sigma_{kk}Y_k^2\right)^2}\right]\\
        =&\E Y_i^2Y_j^2\left[1+2\left(1-\sum_{k=1}^p\sigma_{kk}Y_k^2\right)-\left(1-\sum_{k=1}^p\sigma_{kk}Y_k^2\right)^2\right]\\
        &+\E Y_i^2Y_j^2 \frac{\left(1-\left(\sum_{k=1}^p\sigma_{kk}Y_k^2\right)^2\right)^2}{\left(\sum_{k=1}^p\sigma_{kk}Y_k^2\right)^2}.
    \end{align*}
    By Proposition \ref{prop:even_number4}, Proposition \ref{prop:alpha>4_moments_of_self-normalized_quadratic_forms} and Holder's inequality,
    \begin{align*}
        \E Y_i^2Y_j^2=&\frac{1-p\E Y_i^4}{p(p-1)}=\frac{1}{p^2}+O(p^{-3}),\\
        \E Y_i^2Y_j^2\left(\sum_{k=1}^p\sigma_{kk}Y_k^2-1\right)=&(\sigma_{ii}-1)\E Y_i^4Y_j^2+(\sigma_{jj}-1)Y_i^2Y_j^4\\
        &+\sum_{k\not=i,j}(\sigma_{kk}-1)\E Y_i^2Y_j^2Y_k^2=O(p^{-3}),\\
        \E Y_i^2Y_j^2\left(1-\sum_{k=1}^p\sigma_{kk}Y_k^2\right)^2\leq&\sqrt{\E Y_i^4Y_j^4}\sqrt{\E\left(1-\sum_{k=1}^p\sigma_{kk}Y_k^2\right)^4}=o(p^{-\frac{5}{2}}),\\
        \E Y_i^2Y_j^2\frac{\left(1-\left(\sum_{k=1}^p\sigma_{kk}Y_k^2\right)^2\right)^2}{\left(\sum_{k=1}^p\sigma_{kk}Y_k^2\right)^2}\leq&\E Y_i^2Y_j^2\left(1-\left(\sum_{k=1}^p\sigma_{kk}Y_k^2\right)^2\right)^2\\
        &\lesssim\E Y_i^2Y_j^2\left(1-\sum_{k=1}^p\sigma_{kk}Y_k^2\right)^2=o(p^{-\frac{5}{2}}),
    \end{align*}
    so we have 
    \begin{align*}
        \E\frac{Z_i^2Z_j^2}{\left(\sum_{k=1}^p\sigma_{kk}Z_k^2\right)^2}=\frac{1}{p^2}+o(p^{-\frac{5}{2}}).
    \end{align*}
    The second term and the third term can be controlled in $O(p^{-3})$ by Lemma \ref{lem:alpha_4_diagonal}, so we obtain that $|T_2|=-2\sigma_{ij}/p^2+o(p^{-\frac{5}{2}})$. Next for $T_3$,
    \begin{align*}
        T_3=&2\sum_{k,l}^*\sigma_{kl}^2\E\frac{Z_iZ_jZ_k^2Z_l^2}{\left(\sum_{k=1}^p\sigma_{kk}Z_k^2\right)^3}+4\sum_{k,l,s}^*\sigma_{kl}\sigma_{ks}\E\frac{Z_iZ_jZ_k^2Z_lZ_s}{\left(\sum_{k=1}^p\sigma_{kk}Z_k^2\right)^3}\\
        &+\sum_{k,l,s,t}^*\sigma_{kl}\sigma_{st}\E\frac{Z_iZ_jZ_kZ_lZ_sZ_t}{\left(\sum_{k=1}^p\sigma_{kk}Z_k^2\right)^3}\\
        =&4\sigma_{ij}^2\E\frac{Z_i^3Z_j^3}{\left(\sum_{k=1}^p\sigma_{kk}Z_k^2\right)^3}+8\sum_{k\not=i,j}\sigma_{ik}^2\E\frac{Z_i^3Z_jZ_k^2}{\left(\sum_{k=1}^p\sigma_{kk}Z_k^2\right)^3}\\
        &+2\sum_{k,l\not=i,j}^*\sigma_{kl}^2\E\frac{Z_iZ_jZ_k^2Z_l^2}{\left(\sum_{k=1}^p\sigma_{kk}Z_k^2\right)^3}+16\sigma_{ij}\sum_{k\not=i,j}\sigma_{ik}\E\frac{Z_i^3Z_j^2Z_k}{\left(\sum_{k=1}^p\sigma_{kk}Z_k^2\right)^3}\\
        &+8\sum_{k\not=i,j}\sigma_{ki}\sigma_{kj}\E\frac{Z_i^2Z_j^2Z_k^2}{\left(\sum_{k=1}^p\sigma_{kk}Z_k^2\right)^3}+8\sum_{k,l\not=i,j}^*\sigma_{ik}\sigma_{il}\E\frac{Z_i^3Z_jZ_kZ_l}{\left(\sum_{k=1}^p\sigma_{kk}Z_k^2\right)^3}\\
        &+16\sum_{k,l\not=i,j}^*\sigma_{ik}\sigma_{kl}\E\frac{Z_i^2Z_jZ_k^2Z_l}{\left(\sum_{k=1}^p\sigma_{kk}Z_k^2\right)^3}+4\sum_{k,l,s\not=i,j}^*\sigma_{kl}\sigma_{ks}\E\frac{Z_iZ_jZ_k^2Z_lZ_s}{\left(\sum_{k=1}^p\sigma_{kk}Z_k^2\right)^3}\\
        &+\sum_{k,l,s,t}^*\sigma_{kl}\sigma_{st}\E\frac{Z_iZ_jZ_kZ_lZ_sZ_t}{\left(\sum_{k=1}^p\sigma_{kk}Z_k^2\right)^3}.
    \end{align*}
    All these terms can be controlled in $O(p^{-3})$ by direct applications of Lemma \ref{lem:alpha_4_diagonal}, so $|T_3|=O(p^{-3})$. Then for $T_4$, all the terms can be represented as 
    \begin{align}
        \sum_{i_1,\cdots,i_r}^*\sigma_{i_{j_1}i_{j_2}}\sigma_{i_{j_3}i_{j_4}}\sigma_{i_{j_{5}}i_{j_{6}}}\E\frac{Z_iZ_jZ_{i_1}^{k_1}\cdots Z_{i_r}^{k_r}}{\left(\sum_{k=1}^p\sigma_{kk}Z_k^2\right)^4},\label{form:error_term}
    \end{align}
    where $k_1+\cdots+k_r=6$. When $r\leq 3$, by Lemma \ref{lem:summation_of_sigma} we have 
    \begin{align*}
        \sum_{i_1,\cdots,i_r}^*\sigma_{i_{j_1}i_{j_2}}\sigma_{i_{j_3}i_{j_4}}\sigma_{i_{j_{5}}i_{j_{6}}}=O(p),
    \end{align*}
    and by Lemma \ref{lem:alpha_4_diagonal},
    \begin{align*}
        \E\frac{Z_iZ_jZ_{i_1}^{k_1}\cdots Z_{i_r}^{k_r}}{\left(\sum_{k=1}^p\sigma_{kk}Z_k^2\right)^4}=O(p^{-4}).
    \end{align*}
    So \eqref{form:error_term} is controlled in $O(p^{-3})$. 
    When $r=4$, the summation can only be one of the following three types
    \begin{align}
        &\sum_{i_1,i_2,i_3,i_4}^*\sigma_{i_1i_2}\sigma_{i_1i_3}\sigma_{i_1i_4}=O(p),\label{form:r=4_case1}\\
        &\sum_{i_1,i_2,i_3,i_4}^*\sigma_{i_1i_2}\sigma_{i_1i_3}\sigma_{i_3i_4}=O(p),\label{form:r=4_case2}\\
        &\sum_{i_1,i_2,i_3,i_4}^*\sigma_{i_1i_2}^2\sigma_{i_3i_4}=O(p^2),\label{form:r=4_case3}
    \end{align}
    and by Lemma \ref{lem:alpha_4_diagonal},
    \begin{align*}
        \E\frac{Z_iZ_jZ_{i_1}^{k_1}\cdots Z_{i_r}^{k_r}}{\left(\sum_{k=1}^p\sigma_{kk}Z_k^2\right)^4}=O(p^{-4}).
    \end{align*}
    So we only need analyze the case of \eqref{form:r=4_case3} more carefully. We note that
    \begin{align*}
        \sum_{i_1,i_2,i_3,i_4}^*\sigma_{i_1i_2}^2\sigma_{i_3i_4}\E\frac{Z_iZ_jZ_{i_1}^{k_1}\cdots Z_{i_r}^{k_r}}{\left(\sum_{k=1}^p\sigma_{kk}Z_k^2\right)^4}=\sigma_{ij}\sum_{i_1,i_2\not=i,j}^*\sigma_{i_1i_2}^2\E\frac{Z_i^2Z_j^2Z_{i_1}^2Z_{i_2}^2}{\left(\sum_{k=1}^p\sigma_{kk}Z_k^2\right)^4}+R,
    \end{align*}
    where all the cases in $R$ have 
    \begin{align*}
        \E\frac{Z_iZ_jZ_{i_1}^{k_1}\cdots Z_{i_r}^{k_r}}{\left(\sum_{k=1}^p\sigma_{kk}Z_k^2\right)^4}=O(p^{-5}).
    \end{align*}
    So we conclude that $\eqref{form:error_term}$ can be bounded in $O(p^{-3})$ for $r=4$. When $r=5$, the summation can only be 
    \begin{align*}
        \sum_{i_1,\cdots,i_5}^*\sigma_{i_1i_2}\sigma_{i_1i_3}\sigma_{i_4i_5}=O(p^2),
    \end{align*}
    and by Lemma \ref{lem:alpha_4_diagonal},
    \begin{align*}
        \E\frac{Z_iZ_jZ_{i_1}^{2}Z_{i_2}Z_{i_3}Z_{i_4}Z_{i_5}}{\left(\sum_{k=1}^p\sigma_{kk}Z_k^2\right)^4}=O(p^{-6}).
    \end{align*}
    For $r=6$, 
    the summation can only be 
    \begin{align*}
        \sum_{i_1,\cdots,i_6}^*\sigma_{i_1i_2}\sigma_{i_3i_4}\sigma_{i_5i_6}=O(p^3),
    \end{align*}
    and by Lemma \ref{lem:alpha_4_diagonal},
    \begin{align*}
        \E\frac{Z_iZ_jZ_{i_1}Z_{i_2}Z_{i_3}Z_{i_4}Z_{i_5}Z_{i_6}}{\left(\sum_{k=1}^p\sigma_{kk}Z_k^2\right)^4}=O(p^{-8}).
    \end{align*}
    To sum up, we have $|T_4|=O(p^{-3})$.
    Finally for $T_5$, to remove the influence of $\Z\trans\bSig\Z$, we have to use Holder's inequality and by Proposition \ref{prop:even_number4} and Proposition \ref{prop:cross_term_self_normalization}, we have
    \begin{align*}
        \left|\E\frac{Z_iZ_j\left(\sum_{k\not=l}\sigma_{kl}Z_kZ_l\right)^4}{\left(\sum_{k=1}^p\sigma_{kk}Z_k^2\right)^4\Z\trans\bSig\Z}\right|\lesssim\sqrt{\E Y_i^2Y_j^2}\sqrt{\E\left(\sum_{i,j}^*\sigma_{ij}Y_iY_j\right)^8}=o(p^{-\frac{5}{2}}).
    \end{align*}
    In conclusion, we have 
    \begin{align*}
        p^{\frac{5}{2}}\E\frac{Y_iY_j}{\Y\trans\bSig\Y}+2p^{\frac{1}{2}}\sigma_{ij}\to 0,
    \end{align*}
    and the proof is complete.
\end{proof}
\end{appendix}

 \bibliography{ref}

\end{document}